\newcommand{\N}{\mathbb{N}}
\newcommand{\Q}{\mathbb{Q}}
\newcommand{\R}{\mathbb{R}}
\renewcommand{\S}{\mathbb{S}}
\newcommand{\cA}{\mathcal{A}}
\newcommand{\cC}{\mathcal{C}}
\newcommand{\cE}{\mathcal{E}}
\newcommand{\cF}{\mathcal{F}}
\newcommand{\cH}{\mathcal{H}}
\newcommand{\cL}{\mathcal{L}}
\newcommand{\cN}{\mathcal{N}}
\newcommand{\cM}{\mathcal{M}}
\newcommand{\fP}{\mathfrak{P}}
\newcommand{\bv}{\mathbf v}
\newcommand{\tE}{\tilde{E}}
\newcommand{\tV}{\tilde{V}}
\newcommand{\tGa}{\tilde{\Gamma}}
\newcommand{\diam}{\mbox{\rm diam}}
\newcommand{\supp}{\mbox{\rm supp}}
\renewcommand{\div}{\mbox{\rm div}}
\newcommand{\ind}{\mbox{\rm Ind}}
\newcommand{\wsc}{\overset{\star}{\rightharpoonup}}
\newcommand{\bcup}{\bigcup}
\newcommand{\epty}{\emptyset}
\newcommand{\sen}{\sin}
\newcommand{\sm}{\setminus}
\newcommand{\lgl}{\langle}
\newcommand{\rgl}{\rangle}
\newcommand{\pa}{\partial}
\newcommand{\con}{\subset}
\newcommand{\res}{
	\,\raisebox{-.127ex}{\reflectbox{\rotatebox[origin=br]{-90}{$\lnot$}}}\,
} 
\newcommand{\beq}{\begin{equation}}
\newcommand{\eeq}{\end{equation}}
\newcommand{\beqs}{\begin{equation*}}
\newcommand{\eeqs}{\end{equation*}}
\newcommand{\ep}{\varepsilon} 
\newcommand{\ga}{\gamma}
\newcommand{\be}{\beta}
\newcommand{\al}{\alpha}
\newcommand{\de}{\delta}
\newcommand{\la}{\lambda}
\newcommand{\om}{\omega}
\newcommand{\ro}{\rho}
\newcommand{\si}{\sigma}
\newcommand{\te}{\theta}
\newcommand{\De}{\Delta}
\newcommand{\Ga}{\Gamma}
\newcommand{\La}{\Lambda}
\newcommand{\Si}{\Sigma}
\newcommand{\Om}{\Omega}
\newcommand{\vp}{\varphi}
\newcommand{\fp}{\cF_p}
\newcommand{\rfp}{\overline{\cF_p}}
\newcommand{\rfd}{\overline{\cF_2}}
\newcommand{\flp}{\cF_{\lambda,p}}
\newcommand{\rflp}{\overline{\cF_{\lambda,p}}}
\theoremstyle{plain}
\newtheorem{thm}{Theorem}[section] 
\theoremstyle{plain}
\theoremstyle{plain}
\newtheorem{prop}[thm]{Proposition}
\theoremstyle{plain}
\newtheorem{lemma}[thm]{Lemma}
\theoremstyle{plain}
\newtheorem{cor}[thm]{Corollary}
\theoremstyle{definition}
\newtheorem{defn}[thm]{Definition} 
\theoremstyle{definition}
\newtheorem{remark}[thm]{Remark}
\theoremstyle{definition}
\newtheorem{example}[thm]{Example}
\title[The $p$-elastic energy of planar sets]{A VARIFOLD PERSPECTIVE ON THE $p$-ELASTIC ENERGY\\ OF PLANAR SETS}
\author{Marco Pozzetta}
\address{Dipartimento di Matematica, Universit\`{a} di Pisa, Largo Bruno Pontecorvo 5, 56127 Pisa, Italy}
\email{pozzetta@mail.dm.unipi.it}
\date{\today}
\begin{document}

\begin{abstract}
	Under suitable regularity assumptions, the $p$-elastic energy of a planar set $E\con\R^2$ is defined to be
	\beqs
		\fp(E)=\int_{\pa E} 1 + |k_{\pa E}|^p \,\, d\cH^1,
	\eeqs
	where $k_{\pa E}$ is the curvature of the boundary $\pa E$. In this work we use a varifold approach to investigate this energy, that can be well defined on varifolds with curvature. First we show new tools for the study of $1$-dimensional curvature varifolds, such as existence and uniform bounds on the density of varifolds with finite elastic energy. Then we characterize a new notion of $L^1$-relaxation of this energy by extending the definition of regular sets by an intrinsic varifold perspective, also comparing this relaxation with the classical one of \cite{BeMu04}, \cite{BeMu07}. Finally we discuss an application to the inpainting problem, examples and qualitative properties of sets with finite relaxed energy.
\end{abstract}
\maketitle

\vspace{-12mm}
\tableofcontents	
\vspace{-0.2cm}
\noindent\textbf{MSC Codes:} 49Q15, 49Q20, 49Q10, 53A07.\\
\textbf{Keywords:} Curvature varifolds, $p$-elastic energy, Relaxation.

\vspace{0.2cm}


\section{Introduction}

\noindent Consider $p\in[1,\infty)$. Let us denote by $S^1$ the interval $[0,2\pi]$ with the identification $0\sim2\pi$. For an immersion $\ga:S^1\to\R^2$ such that $\ga\in W^{2,p}(S^1)$ we can define the functional
\beq \label{def1}
	\cE_p(\ga)=\int_0^{2\pi} |k_\ga|^p|\ga'|\,dt,
\eeq
and the $p$-elastic energy
\beq \label{def2}
	\cF_p(\ga)=L(\ga)+\cE_p(\ga),
\eeq
where $L(\ga)$ denotes the length of $\ga$.\\

\noindent In this work we want to study the elastic properties of the boundaries of measurable sets in $\R^2$. Our first purpose is to give a new definition of the sets which are enough regular for having finite $p$-elastic energy. We want such definition to be intrinsically dependent on the given set, using immersions of curves only as a tool for the calculation of the energy.\\

\noindent In order to study the functionals defined in \eqref{def1} and \eqref{def2} one would classically call regular set a set $E$ with a boundary of class $C^2$, i.e. a set $E$ whose $\pa E$ is the image of closed injective immersions $\ga:S^1\to\R^2$ of class $C^2$. This would be a possible definition of set with finite classic $p$-elastic energy, and it is the definition considered in \cite{BeDaPa93}, \cite{BeMu04} and \cite{BeMu07} indeed. But with this classical definition it turns out that sets like the one in Fig. \ref{figotto} not only have infinite energy, but they also have infinite relaxed energy (calculated with respect to the $L^1$-convergence of sets, see \cite{BeMu04}).

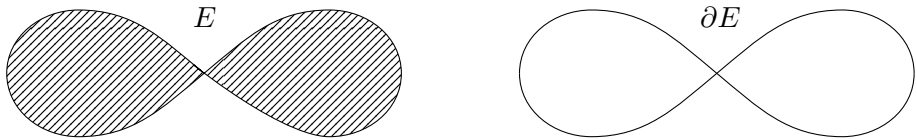
\begin{figure}[h]
	\begin{center}
		\begin{tikzpicture}[scale=1.5, rotate=90]
		\filldraw[fill=white, pattern=north east lines]
		(0,1.73)to[out= 0,in=90, looseness=1] (0.56,1.1)to[out= -90,in=50, looseness=1] (0,0)to[out=-50, in=90, looseness=1](0.56,-1.1)to[out=270, in=0, looseness=1](0,-1.73)to[out=180, in=270, looseness=1](-0.56,-1.1)to[out=90, in=50, looseness=1](0,0)to[out=130, in=270, looseness=1](-0.56,1.1)to[out=90, in=180, looseness=1](0,1.73);
		\path[font=\normalsize]
		(0.5,-0.2)node[left]{$E$};
		\end{tikzpicture}\qquad\qquad
		\begin{tikzpicture}[scale=1.5, rotate=90]
		\draw
		(0,1.73)to[out= 0,in=90, looseness=1] (0.56,1.1)
		(0.56,1.1)to[out= -90,in=50, looseness=1] (0,0);
		\draw
		(0,1.73)to[out= 180,in=90, looseness=1] (-0.56,1.1)
		(-0.56,1.1)to[out= -90,in=130, looseness=1] (0,0);
		\draw[rotate=180]
		(0,1.73)to[out= 0,in=90, looseness=1] (0.56,1.1)
		(0.56,1.1)to[out= -90,in=50, looseness=1] (0,0);
		\draw[rotate=180]
		(0,1.73)to[out= 180,in=90, looseness=1] (-0.56,1.1)
		(-0.56,1.1)to[out= -90,in=130, looseness=1] (0,0);
		\path[font=\normalsize]
		(0.5,-0.3)node[left]{$\pa E$};
		\end{tikzpicture}
	\end{center}
	\caption{A set of finite perimeter $E$ with boundary $\pa E$ that can be parametrized by a smooth non-injective immersion.}\label{figotto}
\end{figure}

\noindent However functionals \eqref{def1} and \eqref{def2} are very well defined on immersions which are not necessarily injective. Also for many applications one would like to consider sets like the one in Fig. \ref{figotto} as regular sets, or at least as sets with finite relaxed energy (applications will also be discussed below). A good definition of regular elastic set, i.e. a definition of set with finite energy, comes intrinsically from the geometric properties of the boundary of sets of finite perimeter studied in the context of varifolds. In fact by De Giorgi's Theorem, if $E$ is a set of finite perimeter in $\R^2$ then the reduced boundary $\cF E$ is $1$-rectifiable, and therefore the integer rectifiable varifold $V_E=\bv(\cF E,1)$ is well defined. If a $1$-rectifiable varifold $V=\bv(\Ga,\te_V)$ has generalized curvature vector $k_V$, the analogue of the functionals \eqref{def1} and \eqref{def2} in the varifold context are defined by
\beq
	\cE_p(V)=\int_{\Ga} |k_V|^p \,d\mu_V,
\eeq
\beq
	\cF_p(V)= \mu_V(\R^2)+ \cE_p(V).
\eeq
So such elastic energies can be calculated on the varifold $V_E$ associated to a set of finite perimeter $E$, thus giving elasticity properties to the set $E$ in a pure intrinsic way.\\
We can introduce the class of elastic varifolds without boundary as the integer rectifiable varifolds $V=\bv(\Ga,\te_V)$ such that there exist a finite family of immersions $\ga_i:S^1\to\R^2$ such that
\beq \label{ref15}
	V=\sum_{i=1}^N (\ga_i)_\sharp(\bv(S^1,1)),
\eeq
where each $(\ga_i)\sharp(\bv(S^1,1))$ is the image varifold of $S^1$ induced by $\ga_i$. We shall see that a representation like \eqref{ref15} is not ambiguous and that the curves appearing in the formula can be used to compute the $\cF_p$ energy (Lemma \ref{lem2}).\\
In this way we will eventually define that a set $E$ is regular (in the sense that is has finite elastic energy) if
\beq \label{def3}
	\bv(\cF E,1)=\sum_{i=1}^N (\ga_i)_\sharp(\bv(S^1,1)),
\eeq
for some $C^2$ immersions $\ga_i:S^1\to\R^2$. In such a way the set in Fig. \ref{figotto} is considered to be regular, and it has finite elastic energy.\\

\noindent We have to mention that a significant attempt in order to give a good definition of the elastic energy on sets that are not natural limits of smooth sets with bounded energy is contained in \cite{BePa95} (see also the references therein). Here the authors consider an interesting generalization of the elastic energy functional whose relaxation is able to take into account the energy of angles and cusps.\\

\noindent Beside the study of the elastic properties of varifolds contained in Section \ref{secelastic}, there are other fundamental motivations for studying this alternative notion of relaxed energy. We would like to extend this ambient perspective and strategy (at least starting from the basic definitions) to the study of the relaxation of functionals depending on the curvature of surfaces in $\R^3$, such as the Willmore energy. Moreover this work is the starting point for the study of the gradient flow of the elastic energy of planar sets using an intrinsic definition of the functional, not completely relying on immersions covering the boundary of the set; the characterization of the relaxed energy allows us to define the gradient flow on a huge family of sets and therefore to try to obtain a generalized flow (for example using a minimizing movements technique in the spirit of \cite{LuSt95}, and this will be the reason of some assumptions we will make in the following). Observe that in particular in a generalized flow one certainly wants to consider sets like the one in Fig. \ref{figotto}, hence a definition in which its energy is finite is required (see also \cite{OkPoWh18}).\\

\noindent The paper is organized as follows. The first part of the work is devoted to the proof of some results about curves and varifolds with curvature from an ambient point of view. We prove a basic inequality concerning the elastic energy of immersed curves using a varifold perspective (Lemma \ref{lem0}), then we show an extension to $1$-dimensional varifolds with curvature in $\R^n$ together with uniform bounds on the multiplicity function (Theorem \ref{propmon}) and a monotonicity formula for the case $p=2$. This helps us to prove the main structural properties of elastic varifolds, which are contained in Lemma \ref{lem2} and Lemma \ref{lemV=V_E}. Such results are stated for any $p\in[1,\infty)$.\\
\noindent In the second part we focus on the $p>1$ case and we give a precise characterization of the $L^1$-relaxation of the energy $\cF_p$ starting from our new notion \eqref{def3} of regular set. The expression of the relaxed energy $\rfp(E)$ takes the form of a minimization problem defined on a class $\cA(E)$ of elastic varifolds suitably related to the set $E$ (Theorem \ref{thmmain}).\\
\noindent The relaxed energy $\rfp$ has to be compared with the classical results contained in \cite{BeDaPa93}, \cite{BeMu04} and \cite{BeMu07}, and in Subsection \ref{BM} we discuss an example of a set $E$ with finite relaxed energy $\rfp$ which is strictly less then its relaxed energy in the sense of \cite{BeMu04} (which is still finite however). The last part of the work continues with an application to a minimization problem arising from the inpainting problem in image processing (\cite{AmMa03}, \cite{BeCaMaSa11}). The relevance of our new definition of relaxed energy $\rfp$ is particularly evident in this application. Then we conclude the work with some comments on the qualitative properties of sets having finite relaxed energy; here we prove that a set $E$ with a boundary except that is smooth but at finitely many cusps has finite relaxed energy if and only if the number fo such cusps is even (Theorem \ref{thmcusps}), and we show that polygons always have infinite relaxed energy (Proposition \ref{poligoni}).\\


\section{Elastic energy of planar sets}\label{secelastic}

\subsection{Notation and definitions}

In the following if $\ga$ is any parametrization of a curve, we denote by $(\ga)$ its image. The letter $E$ will usually denote a measurable set in $\R^2$. We recall that $E$ has finite perimeter in an open set $\Om\con\R^2$ if the characteristic function $\chi_E$ restricted to $\Om$ belongs to $BV(\Om)$, and in such case we denote by $P(E,\Om)$ the perimeter of $E$ in $\Om$. For the theory of sets of finite perimeter we refer to \cite{AmFuPa00}.\\
\noindent If $E\con\R^2$ is measurable and $\Om\con\R^2$ is an open set such that $E$ has finite perimeter in $\Om$, we denote by $D\chi_E$ the gradient measure of $\chi_E$ and by $|D\chi_E|$ the corresponding total variation measure. Then we denote by
\beqs
\cF E=\bigg\{ x\in \supp |D\chi_E|\cap\Om\,\,\bigg|\,\, \exists\,\lim_{\ro\searrow0} \frac{D\chi_E(B_\ro(x))}{|D\chi_E(B_\ro(x))|}=:\nu_E(x),\,|\nu_E(x)|=1  \bigg\},
\eeqs
and we call $\cF E$ the reduced boundary of $E$, and $\nu_E$ is the generalized inner normal of $E$. By De Giorgi's Theorem the set $\cF E$ is $1$-rectifiable and $|D\chi_E|=\cH^1\res \cF E$.\\

\noindent Let $G(1,2)$ be the Cartesian product between $\R^2$ and the set of $1$-dimensional subspaces in $\R^2$. We call $G(1,2)$ the Grassmannian of $1$-dimensional spaces in $\R^2$. A point $(x,v)\in G(1,2)$ (where $v\in\R^2$ with $|v|=1$ generates the given $1$-dimensional subspace) is identified by the matrix $\pi_{x,v}$ that projects vectors in $T_x(\R^2)$ onto the subspace spanned by $v$; therefore $G(1,2)$ obtains a structure of metric space calculating the distance between two elements as the distance between the corresponding projection matrices. A $1$-dimensional varifold in $\R^2$ is a positive finite Radon measure on $G(1,2)$. For the theory of varifolds we refer to \cite{Si84}, and in this work we will always deal with integer rectifiable varifolds.\\
For a $1$-dimensional varifold $V$ in $\R^2$ we denote by $\mu_V$ the induced measure in $\R^2$. We recall that a $1$-dimensional rectifiable varifold $V=\bv(\Ga,\te_V)$ in $\R^2$ has generalized curvature vector $k_V\in L^1_{loc}(\mu_V)$ and generalized boundary $\si_V\in\cM^2(\R^2)$ if for any $X\in C^1_c(\R^2;\R^2)$ it holds
\beqs
\int \big( \div_{T_x\Ga} X \big)\,d\mu_V(x)= - \int \lgl X, k_V \rgl \, d\mu_V + \int X\,d\si_V.
\eeqs
Recall that is such case the measure $\si_V$ is singular with respect to $\mu_V$. If $f:\supp V\to \R^2$ is Lipschitz we define the image varifold $f_\sharp(V):=\bv(f(\Ga),\tilde{\te})$ with $\tilde{\te}(y)=\sum_{x\in f^{-1}(y)\cap \Ga} \te_V(x)$ for any $y\in\R^2$.\\
If $E$ has finite perimeter in $\R^2$ we denote by $V_E$ the associated varifold $V_E:=\bv(\cF E, 1)$. If a varifold $V=\bv(\Ga,\te_V)$ has generalized curvature $k_V$, then we define
\beqs
\cE_p(V):=\int |k_V|^p(x)\,d\mu_V(x),
\eeqs
while if $V$ does not admit generalized curvature we then set $\cE_p(V)=+\infty$.\\
At some point we will also use for a while some very basic facts about the theory of currents; for such definitions and results we refer to \cite{Si84}.\\

\noindent Here we recall with proof some basic properties of sets of finite perimeter, together with the choice of a convention and of the notation. The following observations actually work for sets of finite perimeter in any dimension.\\
If $E\con\R^2$ is a measurable set, for any $t\in[0,1]$ we denote by $E^t$ the subset of $t$-density points, that is
\beq
E^t:=\bigg\{ x\in \R^2 \,\, \bigg|\,\, \lim_{\ro \searrow 0 } \frac{|E\cap B_\ro(x)|}{|B_\ro(x)|}=t \bigg\}.
\eeq
The essential boundary $\pa^*E$ is then $\pa^*E:=\R^2 \sm (E^0\cup E^1)$. Recall that for a set of perimeter $E$ in $\R^3$ it holds that $\cF E\con E^{\frac{1}{2}}\con \pa^* E\con \pa E$ and $\cH^1(\pa^* E\sm \cF E)=0$.\\
\noindent Since in $BV$ we only consider equivalence classes of functions, it will be convenient to choose, for a finite perimeter set identified by a characteristic function $\chi_E\in BV(\R^2)$, the representative $E$ that is the set given by the points having density equal to $1$, i.e.
\begin{equation} \label{ax}
E=E^1
\coloneqq \bigg\{ x\in\R^2\,\,\bigg|\,\,\lim_{\rho\searrow 0} \frac{\int_{B_\rho(x)} \chi_E }{|B_\rho(x)|}=1\bigg\}.
\end{equation}
Assuming \eqref{ax}, we have that
\begin{equation}\label{eq:AxBordi}
\pa E \equiv \pa E^1 = \pa^m E \coloneqq \bigg\{ x\in\R^2 \,\,|\,\, \int_{B_\rho(x)} \chi_E>0,\, \int_{B_\rho(x)} 1-\chi_E>0 \,\, \forall \ro>0  \bigg\}.
\end{equation}
Indeed if $x\in \pa^m E$, as $E^1$ is a representative of $E$, there are sequences $x^1_n\in E^1,x^2_n\in \R^2\setminus E^1$ converging to $x$, and thus $x\in \pa E^1$. Conversely if $x\in\pa E^1$, then $\int_{B_\ro(x)} \chi_E >0$ for any $\ro>0$, for otherwise any point in $E^1$ sufficiently close to $x$ would have density equal to zero. Arguing similarly on the complement of $E^1$, \eqref{eq:AxBordi} follows.

\noindent It also follows that the reduced boundary is dense in the boundary of $E$, that is
\begin{equation} \label{ref2}
\overline{\cF E}=\pa^m E=\pa E.
\end{equation}
Indeed $\cF E\con \pa E$ and if by contradiction there is $x\in \pa^m E\sm \overline{\cF E}$, then for some $\ro_0>0$ we have $B_{\ro_0}(x)\cap \overline{\cF E}=\emptyset$ and $0< |E\cap B_{\ro_0}(x)|<\pi\ro_0^2$. Hence by relative isoperimetric inequality in the ball $B_{\ro_0}(x)$ we get that $ P(E, B_{\ro_0}(x) )>0$, but since $B_{\ro_0}(x)\cap \overline{\cF E}=\epty$ we also have $ P(E, B_{\ro_0}(x) ) = \cH^1(\cF E\cap  B_{\ro_0}(x))=0 $, which gives a contradiction.
Observe that it also follows that $\diam \cF E = \diam \pa E $.\\

\subsection{Preliminary estimates}

Here we prove a fundamental estimate concerning curves in $\R^2$.

\begin{lemma} \label{lem0}
	Let $\ga:\S^1\to \R^2$ be a regular curve in $W^{2,p}$ for some $p\in[1,\infty)$. Then
	\begin{equation} \label{ineq1}
	2\pi \le \int_{\S^1} |k_\ga|\,ds_\ga\le  \left(\int_{\S^1} |k_\ga|^p\,ds_\ga\right)^{\frac{1}{p}}(L(\ga))^{\frac{1}{p'}},
	\end{equation}
	where $L(\ga)$ denotes the length of the curve. Moreover, in the first inequality, equality holds if and only if $\ga$ parametrizes the boundary of a convex set and it is injective.
\end{lemma}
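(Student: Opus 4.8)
The second inequality in \eqref{ineq1} is an immediate consequence of H\"older's inequality applied to the product $|k_\ga|\cdot 1$ with conjugate exponents $p$ and $p'$, integrating against the arclength measure $ds_\ga$ of total mass $L(\ga)$. So the substance of the lemma lies in the first inequality $2\pi\le\int_{\S^1}|k_\ga|\,ds_\ga$ together with the characterization of its equality case. My plan is to interpret the left-hand integral as the total curvature of the curve and relate it to the winding number of the tangent indicatrix.

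First I would introduce the unit tangent $T=\ga'/|\ga'|$ as a map $\S^1\to\S^1$. Since $\ga$ is a closed regular curve of class $W^{2,p}\subset C^1$, the tangent $T$ is a well-defined continuous (indeed $W^{1,p}$) map from a circle to the circle, so it has an integer degree, the turning number (or rotation index) $n\in\Z$. Writing $T=(\cos\theta,\sin\theta)$ for a continuous lift $\theta:\R\to\R$ with $\theta(2\pi)-\theta(0)=2\pi n$, and using that the scalar curvature satisfies $k_\ga|\ga'|=\theta'$ almost everywhere, I would compute
\begin{equation*}
	\int_{\S^1}|k_\ga|\,ds_\ga=\int_0^{2\pi}|k_\ga|\,|\ga'|\,dt=\int_0^{2\pi}|\theta'|\,dt\ge\left|\int_0^{2\pi}\theta'\,dt\right|=2\pi|n|.
\end{equation*}
Since $\ga$ is a closed regular curve, its turning number is a nonzero integer, so $|n|\ge1$, which yields $\int_{\S^1}|k_\ga|\,ds_\ga\ge 2\pi$. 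This is essentially Fenchel-type reasoning specialized to the plane, where the total curvature equals $2\pi$ times the (absolute) turning number.

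For the equality case I would argue that $2\pi=\int|k_\ga|\,ds_\ga=\int|\theta'|\,dt$ forces two things simultaneously. First, equality in $\int|\theta'|\ge|\int\theta'|$ means $\theta'$ does not change sign, i.e.\ $\theta$ is monotone, so the tangent vector rotates monotonically; second, $|n|=1$, so $\theta$ sweeps out exactly one full turn. Monotonicity of $\theta$ combined with a single full turn says the tangent direction is a degree-one monotone map of $\S^1$ onto itself, which is exactly the condition that $\ga$ bounds a convex region: the signed curvature has a constant sign (up to orientation), and one can invoke the standard fact that a closed regular plane curve with nonnegative curvature and turning number one is convex. Injectivity would follow because a convex curve with turning number one cannot have self-intersections. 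Conversely, a convex set's boundary, positively oriented, has monotone tangent angle with turning number one, giving equality.

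The main obstacle I expect is the regularity bookkeeping: since $\ga$ is only $W^{2,p}$ and not necessarily $C^2$, I must justify the existence of a continuous (absolutely continuous) lift $\theta$ with $\theta'=k_\ga|\ga'|$ in $L^p$, and argue the turning number is well defined and integer-valued in this reduced-regularity setting. This is handled by noting $W^{2,p}(\S^1)\hookrightarrow C^1(\S^1)$ for $p\ge1$ on a one-dimensional domain, so $T$ is continuous and the degree is classical, while the lift $\theta$ is absolutely continuous with the stated derivative. The other delicate point is the rigorous passage from ``monotone tangent angle, turning number one'' to ``convex and injective,'' where I would cite or reprove the planar Hadamard-type characterization of convexity via signed curvature, and treat the possible orientation-reversal by allowing $k_\ga$ of constant nonpositive sign as well.
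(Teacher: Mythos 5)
Your handling of the H\"older step and of the identity $\int_{\S^1}|k_\ga|\,ds_\ga=\int_0^{2\pi}|\theta'|\,dt$ is fine, but the proof breaks at the pivotal claim that ``since $\ga$ is a closed regular curve, its turning number is a nonzero integer.'' That is false: by the Whitney--Graustein theorem the turning number takes every value in $\Z$ among regular closed plane curves, and a smooth figure-eight immersion has $n=0$, so your chain of inequalities only yields $\int_{\S^1}|k_\ga|\,ds_\ga\ge 2\pi|n|=0$. Hopf's Umlaufsatz, which guarantees $|n|=1$, is available only for \emph{simple} closed curves, whereas the lemma is stated precisely for possibly non-injective immersions --- indeed the paper's motivating example (Fig.~\ref{figotto}) is bounded by a figure-eight-type curve, so your argument fails on exactly the curves the lemma is designed to cover. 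The equality discussion inherits the same defect: you invoke equality in $\int|\theta'|\,dt\ge|\int\theta'\,dt|$, but equality there is only forced once one knows $|n|=1$, i.e.\ under the unproven claim.

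The gap is fixable, and the paper's own proof shows one way: replace $\ga$ by the boundary of the closed convex hull of its image, which is a $C^{1,1}$ embedded curve $\si$ satisfying $\int|k_\si|\,ds_\si\le\int|k_\ga|\,ds_\ga$; for the \emph{embedded} curve the index is $1$, so the tangent map $\tau_\si:\S^1\to\S^1$ has degree $1$ and is therefore surjective, and the area formula gives $\int_{\S^1}|k_\si|\,ds_\si=\int_{\S^1}\sharp\,\tau_\si^{-1}(v)\,d\cH^1(v)\ge\cH^1(\S^1)=2\pi$. Alternatively, you could salvage your tangent-indicatrix approach by running the planar Fenchel argument directly on $\ga$: for every $v\in\S^1$, maximizing and minimizing $s\mapsto\lgl\ga(s),v\rgl$ shows that the image of the unit tangent $T$ contains $v^\perp$ or $-v^\perp$, so this image is either all of $\S^1$ or a closed arc of length at least $\pi$; in the arc case the closed path $T$ must traverse the arc forth and back, so its total variation --- which equals $\int_{\S^1}|k_\ga|\,ds_\ga$ --- is at least twice the arc length, hence at least $2\pi$, while in the surjective case either the degree is nonzero (giving total variation at least $2\pi|n|$) or the lift $\theta$ has oscillation at least $2\pi$, giving the same bound. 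With the inequality established by either route, your second-half equality analysis (monotone tangent angle with one full turn implies convexity and injectivity, and conversely) is sound and matches the paper's conclusion.
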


\begin{proof}
	By approximation it is enough to prove the statement for $\ga\in C^\infty$. Then the closed convex envelop of the support $(\ga)$ is a $C^{1,1}$-smooth set, and its boundary can be parametrized by an embedded curve $\si:\S^1\to \R^2$ of class $H^2$. Then $\int |k_\si|\,ds_\si \le \int |k_\ga|\,ds_\ga$. Hence, if we can prove that
	\begin{equation} \label{pluda}
	\int_{\S^1} |k_\si|\,ds_\si \ge 2\pi,
	\end{equation}
	the thesis follows by H\"{o}lder inequality. By approximation, we can assume without loss of generality that $\si:\S^1\to \R^2$ is an embedded curve of class $C^\infty$ that positively parametrizes the boundary of the bounded set it encloses. Moreover, as $\int |k_\si|\,ds_\si$ is scaling invariant, we can assume that $\si$ is parametrized by arclength. If $\tau_\si$ is the tangent vector along $\si$, we clearly have that $\tau_\si(\S^1)=\S^1$, that is, it is surjective. Indeed, $\si$ has index equal to $1$, hence the degree of $\tau_\si$ equals $1$, and then $\tau_\si:\S^1\to\S^1$ has to be surjective. Therefore we estimate
	\[
	\int_{\S^1} |k_\si|\,ds_\si = \int_{\S^1} |\pa_x \tau_\si(x)|\,dx = \int_{\S^1} \sharp \tau_\si^{-1}(v)\,d\cH^1(v) \ge \cH^1(\S^1)=2\pi,
	\]
	where we used the area formula.
	Letting $\tau_\si(x)=(\cos\te(x),\sin\te(x))$ for a smooth angle function $\te:[0,2\pi]\to\R$, we have that $|k_\si|=|\pa_x\te|$. Hence the equality case in the above estimate implies that $\pa_x\te$ has a sign. As $\pa_x\te=\langle k_\si,\nu_\si\rangle$, this means that $\si$ is convex.
	%
	%
\end{proof}

\noindent We mention that inequality \eqref{pluda} is already present in \cite{DaNoPl18}, proved with a different method in the setting of networks, but in the following we will need the specific approach used in the proof of Lemma \ref{lem0}. Also, we are going to prove that inequality \eqref{ineq1} is true (up to changing the constant) in an analogous sense in the setting of varifolds as stated in the inequality \eqref{ineq1var} in Subsection \ref{sscmon}.\\

\subsection{Monotonicity} \label{sscmon}

Here we develop a monotonicity-type argument that is the direct analogue of Simon's Monotonicity Formula (\cite{Si93}), which is fundamental in the study of the Willmore energy, that in some sense is the two-dimensional energy corresponding to the functional $\cE_2$. This result is of independent interest and it will be sated in general in $\R^n$.\\

\noindent Throughout this Subsection consider $x_0\in \R^n$, $0<\si<\ro<+\infty$, $V=\bv(\Ga,\te_V)\neq0$ an integer $1$-dimensional rectifiable varifold in $\R^n$ with curvature $k_V$ such that $\cE_p(V)<+\infty$ for some $p\in[1,\infty)$ (for the moment $\mu_V$ is just locally finite on $\R^n$). Also we are assuming that $\si_V=0$.\\

\noindent Consider the field $X(x)=\big(\frac{1}{|x-x_0|_\si}-\frac{1}{\ro}\big)_+ x$, where $(\cdot)_+$ denotes the nonnegative part and $|\cdot|_\si=\max\{|\cdot|,\si\}$. For any set $E$ let $E_\si=E\cap B_\si$, $E_\ro=E\cap B_\ro$, $E_{\ro,\si}=E_\ro \sm \overline{B_\si}$. Then the tangential divergence of $X$ is
\beqs
\div_{T\Ga} X (x)= \begin{cases}
	\frac{1}{\si}-\frac{1}{\ro} & \mbox{on }\Ga_\si,\\
	\frac{|(x-x_0)^\perp|^2}{|x-x_0|^3}-\frac{1}{\ro} & \mbox{on }\Ga_{\ro,\si}.
\end{cases}
\eeqs

\noindent We want to prove the following result.
\begin{thm} \label{propmon}
	Under the above assumptions it holds that
	\beq \label{densbound'}
	\limsup_{\si\searrow0} \frac{\mu_V(B_\si(x_0))}{\si} \le \liminf_{\ro\nearrow\infty} \frac{\mu_V(B_\ro(x_0))}{\ro}+\int_{B_\ro(x_0)} |k_V|\,d\mu_V(x).
	\eeq
	If also $\mu_V(\R^n)<+\infty$, then
	\beq \label{ineq1var}
	2\le \cE_1(V)\le \mu_V(\R^n)^{\frac{1}{p'}}\cE_p(V)^{\frac{1}{p}},
	\eeq
	and we have the following bounds on the multiplicity function:
	\beq
		p>1 \quad \Rightarrow \quad \te_V(x) \le  \frac{1}{2}\cE_1(V) \qquad\forall\,x\in \R^n,
	\eeq
	\beq
		p=1 \quad \Rightarrow \quad \te_V(x)\le  \frac{1}{2}\cE_1(V) \qquad\mbox{for }\cH^1\mbox{-ae}\,\,x\in \R^n.
	\eeq
	If also $\mu_V(\R^n)<+\infty$ and $\Ga$ is essentially bounded, i.e. $\cH^1(\Ga\sm B_R(0))=0$ for $R$ large enough, then
	\beq
		p=1 \quad \Rightarrow \quad \exists \lim_{r\searrow0}\frac{\mu_V(B_r(x))}{2r}=\te_V(x) \le  \frac{1}{2}\cE_1(V) \qquad\forall\,x\in \R^n.
	\eeq
\end{thm}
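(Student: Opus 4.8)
The plan is to feed the radial field $X$ introduced above into the first variation of $V$ and then read the resulting identity off at different scales. Since $\si_V=0$, the first variation identity is $\int \div_{T\Ga}X\,d\mu_V=-\int\lgl X,k_V\rgl\,d\mu_V$ for every $X\in C^1_c(\R^n;\R^n)$, and I would first extend it to the Lipschitz, compactly supported field $X(x)=\Big(\tfrac{1}{|x-x_0|_\si}-\tfrac1\ro\Big)_+(x-x_0)$ by approximation in $W^{1,\infty}$ (both sides are continuous under such approximation because $\mu_V$ is locally finite and $k_V\in L^1_{loc}(\mu_V)$). Inserting $X$, using the tangential divergence computed above, the fact that $X\equiv0$ outside $B_\ro(x_0)$, and combining the two $-\tfrac1\ro$ contributions into $-\tfrac1\ro\mu_V(B_\ro(x_0))$ (valid for all but countably many $\si$, since then $\mu_V(\overline{B_\si}\sm B_\si)=0$), I obtain the exact identity
\beq
\frac{\mu_V(B_\si(x_0))}{\si}-\frac{\mu_V(B_\ro(x_0))}{\ro}+\int_{\Ga_{\ro,\si}}\frac{|(x-x_0)^\perp|^2}{|x-x_0|^3}\,d\mu_V=-\int_{B_\ro(x_0)}\Big(\frac{1}{|x-x_0|_\si}-\frac1\ro\Big)\lgl x-x_0,k_V\rgl\,d\mu_V.
\eeq

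Dropping the nonnegative perpendicular term and bounding the right-hand side via $|\lgl x-x_0,k_V\rgl|\le|x-x_0|\,|k_V|$ together with $0\le\tfrac{1}{|x-x_0|_\si}-\tfrac1\ro$ and $\tfrac{|x-x_0|}{|x-x_0|_\si}\le1$ on $B_\ro(x_0)$ gives the density inequality $\tfrac{\mu_V(B_\si(x_0))}{\si}\le\tfrac{\mu_V(B_\ro(x_0))}{\ro}+\int_{B_\ro(x_0)}|k_V|\,d\mu_V$. Passing to $\limsup_{\si\searrow0}$ and $\liminf_{\ro\nearrow\infty}$ yields \eqref{densbound'}. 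If moreover $\mu_V(\R^n)<+\infty$, letting $\ro\nearrow\infty$ (so $\tfrac{\mu_V(B_\ro)}{\ro}\to0$ and $\int_{B_\ro}|k_V|\to\cE_1(V)$) upgrades this to the scale-free bound
\beq
\frac{\mu_V(B_\si(x_0))}{\si}\le\cE_1(V)\qquad\text{for every }\si>0,\ x_0\in\R^n.\label{sketchkey}
\eeq
Because $V$ is integer rectifiable, the $1$-density $\lim_{\si\searrow0}\tfrac{\mu_V(B_\si(x_0))}{2\si}=\te_V(x_0)\in\{1,2,\dots\}$ exists for $\cH^1$-a.e.\ $x_0$; evaluating \eqref{sketchkey} there gives $2\te_V(x_0)\le\cE_1(V)$, whence $2\le\cE_1(V)$ and the $\cH^1$-a.e.\ multiplicity bound. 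The remaining inequality of \eqref{ineq1var} is H\"older: $\cE_1(V)=\int|k_V|\,d\mu_V\le\cE_p(V)^{1/p}\mu_V(\R^n)^{1/p'}$.

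To obtain the bound at \emph{every} point when $p>1$ I would show the $1$-density exists everywhere. Comparing the exact identity at two scales $\si_1<\si_2$ (equivalently differentiating it) and using $|\lgl x-x_0,k_V\rgl|\le\si|k_V|$ on $B_\si$ gives
\beq
\frac{d}{d\si}\,\frac{\mu_V(B_\si(x_0))}{\si}\ge-\frac1\si\int_{B_\si(x_0)}|k_V|\,d\mu_V.
\eeq
For $p>1$, H\"older together with the bound $\mu_V(B_\si)\le\cE_1(V)\,\si$ from \eqref{sketchkey} gives $\int_{B_\si}|k_V|\le\cE_p(V)^{1/p}\mu_V(B_\si)^{1/p'}\le C\si^{1/p'}$, so the right-hand side is $O(\si^{-1/p})$, which is integrable near $\si=0$. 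Hence $\si\mapsto\tfrac{\mu_V(B_\si(x_0))}{\si}+\int_0^\si\tfrac1\tau\big(\int_{B_\tau(x_0)}|k_V|\,d\mu_V\big)\,d\tau$ is nondecreasing with a correction vanishing as $\si\searrow0$, so $\lim_{\si\searrow0}\tfrac{\mu_V(B_\si(x_0))}{2\si}=\te_V(x_0)$ exists for every $x_0$, and \eqref{sketchkey} upgrades the multiplicity bound to all of $\R^n$.

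The genuinely delicate assertion is the last one, $p=1$ with $\Ga$ essentially bounded, and I expect it to be the main obstacle: the correction above is only $O(\si^{-1})$ and no longer integrable, so the monotonicity trick breaks down and the borderline exponent allows the ratio $\tfrac{\mu_V(B_r(x))}{2r}$ to oscillate a priori. Here the plan is to analyse blow-ups of $V$ at $x_0$: the uniform mass bound \eqref{sketchkey} and the vanishing of $\int_{B_{\si r}(x_0)}|k_V|$ under rescaling force every subsequential blow-up limit to be a stationary integral $1$-varifold, hence a cone over finitely many lines through the origin, whose density at $0$ is a positive integer bounded by $\tfrac12\cE_1(V)$. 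The crux is then the \emph{uniqueness} of this limit, i.e.\ that the blow-ups do not merely oscillate between different cones; this is exactly where essential boundedness (together with $\mu_V(\R^n)<+\infty$ and $\si_V=0$) must enter, as it confines the mass to a bounded region and organises it into finitely many closed curve-pieces, ruling out the density oscillation that $p=1$ would otherwise permit. Establishing this without appealing to the later structural lemmas is the technical heart of the statement; once the limit is known to exist at every $x_0$, the value $\te_V(x_0)\le\tfrac12\cE_1(V)$ follows once more from \eqref{sketchkey}.
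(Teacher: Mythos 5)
Up through the $p>1$ multiplicity bound your argument is correct and essentially the paper's: same test field, and your rearranged exact identity is precisely the paper's \eqref{ref3}. Two of your steps are in fact slightly cleaner variants: the pointwise bound $\big(\tfrac{1}{|x-x_0|_\si}-\tfrac1\ro\big)|x-x_0|\le 1$ gives \eqref{densbound'} directly, where the paper uses a H\"older estimate and limits; and for $p>1$ the paper does not reprove anything but simply cites the known fact (\cite{Si84}, p.~86) that the $1$-density of an integral varifold with curvature in $L^p$, $p>1$, exists at every point. Your two-scale comparison is a legitimate self-contained substitute — and you can simplify it further: comparing the identity at scales $\si_1<\si_2$ yields $\tfrac{\mu_V(B_{\si_1})}{\si_1}\le\tfrac{\mu_V(B_{\si_2})}{\si_2}+3\,\cE_p(V)^{1/p}\mu_V(B_{\si_2})^{1/p'}$, and since the error term vanishes as $\si_2\searrow0$ this gives $\limsup\le\liminf$ at once, with no integrating factor or a.e.-differentiation issues. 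The deduction of \eqref{ineq1var} and the $\cH^1$-a.e.\ bound for $p=1$ also match the paper.

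The genuine gap is the last assertion ($p=1$, $\Ga$ essentially bounded): you only sketch a blow-up program and explicitly leave open its crux, namely that subsequential blow-up limits cannot oscillate between stationary cones of different density — so this part of the statement is not proved, and your guess about how essential boundedness enters is not how it actually does. The paper's argument is soft and avoids tangent cones entirely: fix $x_0$ and pass to the limit in the exact identity \eqref{ref3}. As $\si\searrow0$ one has $\big|\tfrac1\si\int_{B_\si(x_0)}\lgl k_V,x-x_0\rgl\,d\mu_V\big|\le\int_{B_\si(x_0)}|k_V|\,d\mu_V\to0$ by absolute continuity of the integral ($k_V\in L^1(\mu_V)$ since $\cE_1(V)<\infty$); as $\ro\nearrow\infty$ one has $\big|\tfrac1\ro\int_{B_\ro(x_0)}\lgl k_V,x-x_0\rgl\,d\mu_V\big|\le\tfrac{R_0}{\ro}\,\cE_1(V)\to0$, and \emph{this} is the only point where $\cH^1(\Ga\sm B_{R_0})=0$ is used; the term $\int_{B_\ro\sm B_\si}\lgl k_V,\tfrac{x-x_0}{|x-x_0|}\rgl\,d\mu_V$ converges by dominated convergence, and $\mu_V(B_\ro)/\ro\to0$ by finiteness of the mass. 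Consequently the nonnegative term $\int_{B_\ro\sm B_\si}\tfrac{|(x-x_0)^\perp|^2}{|x-x_0|^3}\,d\mu_V$ is bounded uniformly in $\si,\ro$, hence converges by monotonicity as $\si\searrow0$, $\ro\nearrow\infty$; the identity \eqref{ref3} then forces $\lim_{\si\searrow0}\mu_V(B_\si(x_0))/\si$ to exist finite at \emph{every} $x_0$, which is the claim. You should replace your blow-up sketch by this direct limit argument: every term of the identity other than $\mu_V(B_\si)/\si$ has a limit, so that one must too.
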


\begin{proof}
Integrating the divergence $\div_{T\Ga} X $ above with respect to $\mu_V$ and using the first variation formula we get
\beq \label{ref3}
\begin{split}
	&\frac{\mu_V(B_\si(x_0))}{\si}+\frac{1}{\si}\int_{B_\si(x_0)} \lgl k_V, x-x_0\rgl\,d\mu_V(x) + \int_{B_\ro(x_0)\sm B_\si(x_0)} \frac{|(x-x_0)^\perp|^2}{|x-x_0|^3} \,d\mu_V(x) = \\ &\,\,\, = \frac{\mu_V(B_\ro(x_0))}{\ro}+\frac{1}{\ro}\int_{B_\ro(x_0)} \lgl k_V, x-x_0\rgl\,d\mu_V(x) - \int_{B_\ro(x_0)\sm B_\si(x_0)} \bigg\lgl k_V, \frac{x-x_0}{|x-x_0|}\bigg\rgl\,d\mu_V(x).
\end{split}
\eeq
Dropping the positive term on the left we obtain
\beqs
\begin{split}
	\frac{\mu_V(B_\si(x_0))}{\si} &+\frac{1}{\si}\int_{B_\si(x_0)} \lgl k_V, x-x_0\rgl\,d\mu_V(x)\le \\ &\le \frac{\mu_V(B_\ro(x_0))}{\ro}+ \int_{B_\ro(x_0)} \bigg\lgl k_V, \frac{x-x_0}{\ro} - \frac{x-x_0}{|x-x_0|}\chi_{B_\ro(x_0)\sm B_\si(x_0)} \bigg\rgl \, d\mu_V(x).
%
%
%
\end{split}
\eeqs
Since
\beqs
	\bigg|\frac{1}{\si}\int_{B_\si(x_0)} \lgl k_V, x-x_0\rgl\,d\mu_V(x)\bigg|\le \bigg(\int_{B_\si(x_0)}|k_V|^p\,d\mu_V\bigg)^{\frac{1}{p}}\big(\mu_V(B_\si(x_0)\big)^{\frac{1}{p'}}\xrightarrow[\si\to0]{}0,
\eeqs
and
\beqs
	\frac{x-x_0}{|x-x_0|}\chi_{B_\ro(x_0)\sm B_\si(x_0)} \xrightarrow[\si\to0]{} \frac{x-x_0}{|x-x_0|}\chi_{B_\ro(x_0)} \qquad \mbox{in }L^{p'}(\mu_V),
\eeqs
letting $\si\searrow0$ and then $\ro\nearrow\infty$ we get the inequality
\beqs
\begin{split}
	\limsup_{\si\searrow0} \frac{\mu_V(B_\si(x_0))}{\si} &\le \liminf_{\ro\nearrow\infty} \frac{\mu_V(B_\ro(x_0))}{\ro}+ \int_{B_\ro(x_0)} \bigg\lgl k_V, \bigg(\frac{1}{\ro}-\frac{1}{|x-x_0|}\bigg)(x-x_0)\bigg\rgl \, d\mu_V(x)\le\\
	&\le \liminf_{\ro\nearrow\infty} \frac{\mu_V(B_\ro(x_0))}{\ro}+\int_{B_\ro(x_0)} |k_V| \bigg|\frac{|x-x_0|}{\ro}-1 \bigg|\,d\mu_V(x)\le\\
	&\le \liminf_{\ro\nearrow\infty} \frac{\mu_V(B_\ro(x_0))}{\ro}+\int_{B_\ro(x_0)} |k_V|\,d\mu_V(x).
\end{split}
\eeqs
that is \eqref{densbound'}.

\noindent Suppose from now on that $\mu_V(\R^n)<+\infty$, then \eqref{densbound'} gives
\beq \label{densbound}
\limsup_{\si\searrow0} \frac{\mu_V(B_\si(x_0))}{\si} \le   \cE_1(V).
\eeq
Equation \eqref{densbound} gives us the pointwise bounds on the multiplicity function $\te_V$ as follows.\\
If $p>1$ we know that the density $\lim_{\si \searrow 0 } \frac{\mu_V(B_\si(p))}{2\si}$ exists at any $p$ and can be used as multiplicity function $\te_V$ for $V$ (\cite{Si84}, page 86). So in this case \eqref{densbound} gives
\beq
\te_V(x)= \lim_{\si \searrow 0 } \frac{\mu_V(B_\si(p))}{2\si}  \le \frac{1}{2} \cE_1(V)\qquad\forall\,x\in \Ga.
\eeq\\
Instead in the $p=1$ case we can say the following. Since $\Ga$ has generalized tangent space at $\cH^1$-ae point we have that
\beq
	\te_V(x)=\lim_{\si\searrow0}  \frac{\mu_V(B_\si(x_0))}{2\si} \le  \frac{1}{2}\cE_1(V) \qquad\mbox{for }\cH^1\mbox{-ae}\,\,x\in \Ga.
\eeq
Therefore, since $\te_V(x)\ge1$ at some point $x$, for any $p\in[1,\infty)$ we can state inequality \eqref{ineq1var}.\\

\noindent Now assume $p=1$ and without loss of generality $\Ga\con B_{R_0}(0)$ is bounded, then we want to show that the limit $\lim_{\si \searrow 0 } \frac{\mu_V(B_\si(x_0))}{\si}$ does exist for any $x_0\in\R^n$. In fact in Equation \eqref{ref3} we have
\beqs
	\begin{split}
				& \bigg|\frac{1}{\si}\int_{B_\si(x_0)} \lgl k_V, x-x_0\rgl \, d\mu_V(x)\bigg| \to 0 \qquad \mbox{as }\si\to0,\\
				& \bigg|\frac{1}{\ro}\int_{B_\ro(x_0)} \lgl k_V, x-x_0\rgl \, d\mu_V(x)\bigg| \le \frac{R_0}{\ro}\int_{B_{R_0}(x_0)} |k_V|\,d\mu_V \to 0 \qquad \mbox{as }\ro\to\infty,\\
				& \frac{x-x_0}{|x-x_0|} \chi_{B_\ro(x_0)\sm B_\si(x_0)} \to \frac{x-x_0}{|x-x_0|} \qquad\mbox{in }L^1(\R^n,\mu_V),
	\end{split}
\eeqs\\
where the last statement follows by Dominated Convergence. Therefore there exists the limit
\beqs
\lim_{\si \searrow 0,\,\, \ro \nearrow\infty }\int \bigg\lgl k_V, \frac{x-x_0}{|x-x_0|} \chi_{B_\ro(x_0)\sm B_\si(x_0)}\bigg\rgl d\mu_V(x),
\eeqs
which is also finite. Hence \eqref{ref3} implies that
\beq
	\sup_{\si,\ro>0} \int_{B_\ro(x_0)\sm B_\si(x_0)} \frac{|(x-x_0)^\perp|^2}{|x-x_0|^3} \,d\mu_V(x) <+\infty,
\eeq
thus by monotonicity the limit
\beqs
	\lim_{\si \searrow 0,\,\, \ro \nearrow\infty } \int_{B_\ro(x_0)\sm B_\si(x_0)} \frac{|(x-x_0)^\perp|^2}{|x-x_0|^3} \,d\mu_V(x)
\eeqs
exists finite. Since $\lim_{\ro\to\infty} \frac{\mu_V(B_\ro(x_0))}{\ro}\to 0$ as $\ro\to 0$, Equation \eqref{ref3} implies that
\beq
	\exists \lim_{\si \searrow 0 } \frac{\mu_V(B_\si(x_0))}{\si} <+\infty \qquad \forall x_0\in \R^n,
\eeq
which completes the proof.
\end{proof}

\noindent We mention that the inequality \eqref{ineq1var} is probably not sharp, but still new in the context of $1$-dimensional varifolds.\\

\noindent We conclude with a monotonicity statement concerning the $p=2$ case.
\begin{remark}
Let $p=2$. For $r>0$ let
\beqs
	A(r)=\bigg(\frac{1}{2}+\frac{1}{r}\bigg)\mu_V(B_r(x_0))+ \frac{1}{r}\int_{B_r(x_0)} \lgl k_V, x-x_0 \rgl \, d\mu_V(x) + \frac{1}{2}\int_{B_r(x_0)}|k_V|^2\,d\mu_V.
\eeqs
Then
\beq \label{monotonicity}
	A(\si)+ \int_{B_\ro(x_0)\sm B_\si(x_0)} \bigg( \frac{|(x-x_0)^\perp|^2}{|x-x_0|^3}+ \frac{1}{2}\bigg| k_V + \frac{x-x_0}{|x-x_0|} \bigg|^2\bigg) \, d\mu_V(x) = A(\ro),
\eeq
in particular $r\mapsto A(r)$ is nondecreasing.\\
Indeed to prove \eqref{monotonicity} just insert the identity $ \big\lgl k_V, \frac{x}{|x|}\big\rgl =\frac{1}{2}\big( \big|k_V +  \frac{x}{|x|}\big|^2 - |k_V|^2  - 1 \big) $ in \eqref{ref3}.\\
\noindent Moreover if we additionally require that $\mu_V(\R^n)<+\infty$, then
\beqs
\begin{split}
  	\frac{1}{R}&\int_{B_R(x_0)} \lgl k_V, x-x_0 \rgl \, d\mu_V(x)= \frac{1}{R}\int_{B_{r}(x_0)} \lgl k_V, x-x_0 \rgl \, d\mu_V(x) + \frac{1}{R}\int_{B_R(x_0)\sm B_r(x_0)} \lgl k_V, x-x_0 \rgl \, d\mu_V(x)\le \\
  	& \le \frac{1}{R}\int_{B_{r}(x_0)} \lgl k_V, x-x_0 \rgl \, d\mu_V(x)  + \bigg(\int_{B_R(x_0)\sm B_r(x_0)} |k_V|^2 \bigg)^{\frac{1}{2}} \big(\mu_V(B_R(x_0)\sm B_r(x_0)) \big)^{\frac{1}{2}}
\end{split}
\eeqs
for any $r<R$. So letting first $R\to\infty$ and then $r\to\infty$ we get that $\frac{1}{R}\int_{B_R(x_0)} \lgl k_V, x-x_0 \rgl \, d\mu_V(x) \to 0$ as $R\to\infty$. And thus we obtain that
\beq
	\lim_{r\to\infty} A(r)= \frac{1}{2}\bigg(\mu_V(\R^n) + \cE_2(V) \bigg),
\eeq
for any choice of $x_0\in\R^n$.
\end{remark}

\begin{remark}
	After the conclusion of the work the author became aware of the fact that Theorem \ref{propmon} also follows from Corollary 4.8 in \cite{Me16} (see also Theorem 3.5 in \cite{MeSc18}).\\
\end{remark}

\subsection{Elastic varifolds}

\noindent Here we prove some important remarks about varifolds defined through immersions of elastic curves. The next definition comes from \cite{BeMu04}.

\begin{defn}
	Given a family of regular $C^1$ curves $\al_i:(-a_i,a_i)\to\R^2$ for $i=1,...,N$ and a point $p\in \R^2$ such that $\al_i(t_i)=p$ for some times $t_i$ and the curves $\{\al_i\}$ are tangent at $p$. Let $v\in S^1$ such that $\al_i'(t_i)$ and $v$ are parallel for any $i$. We say that $R_v(p)$ is a \emph{nice rectangle at} $p$ \emph{for the curves} $\{\al_i\}$ \emph{with side parameters} $a,b>0$ if
	\beqs
	R_v(p)=\{ z\in\R^2\,\,:\,\, |\lgl z-p,v\rgl|<a,\, |\lgl z-p,v^\perp\rgl|<b \},
	\eeqs
	and
	\beqs
	R_v(p)\cap \bigg(\bcup_{i=1}^N (\al_i) \bigg) = \bcup_{i=1}^M graph(f_i),
	\eeqs
	for distinct $C^1$ functions $f_i:[-a,a]\to(-b,b)$, where $graph(f_i)$ denotes the graph of $f_i$ constructed on the lower side of the rectangle.\\
\end{defn}

\noindent We also give the following definition.
\begin{defn}\label{defflux}
	Let $V=\mathbf{v}(\cup_{i\in I}(\gamma_i),\theta_V)$ be a varifold defined by the $C^1\cap W^{2,p}$ immersions $\gamma_i:S^1\to \mathbb{R}^2 $, and assume that $\mathcal{F}_p(V)<+\infty,\,\theta_V\le C<+\infty$.\\
	For any $ p\in \cup_{i\in I}(\gamma_i)$ and any $v\in S^1$ denote by $g_1,...,g_r:[-\varepsilon,\varepsilon]\hookrightarrow\mathbb{R}^2$ arclength parametrized injective arcs such that: $g_i(0)=p$, $\dot{g}_i(0) = v$, $g_i([-\varepsilon,0])\neq g_j([-\varepsilon,0])$ or $g_i([0,\varepsilon])\neq g_j([0,\varepsilon])$ for $i\neq j$, and $\cup_{i=1}^r (g_i) \cap \overline{B}_\rho(p) = \cup_{i\in I}(\gamma_i)\cap \overline{B_\rho(p)}$. Observe that for any such $p,v$ and $\rho$ small enough, the arcs $g_i$ are well defined.\\
	We say that $V$ verifies the \emph{flux property} if: $\forall\, p\in \cup_{i\in I}(\gamma_i),\,\forall \, v\in S^1,$ and $\rho$ small enough there exists a nice rectangle $R_v(p)\subset B_\rho(p)$ for the family of arcs $\{g_i\}$ such that it holds that
	\begin{equation*}
	\forall |c|<a:\qquad \sum_{ z\in \cup_{i=1}^r (g_i) \cap \{y\,|\,\langle y-p, v\rangle = c \} }  \theta_V(z)=M,
	\end{equation*}
	for a constant $M\in\mathbb{N}$ with $M\le \theta_V(p)$.\\
\end{defn}
\noindent Roughly speaking, Definition \ref{defflux} requires that the ``incoming" total amount of multiplicity at $p$ in direction $v$ equals the ``outcoming" total amount of multiplicity at $p$ in direction $v$.\\
\noindent Observe that if $V=\sum_{i\in I} (\ga_i)_\sharp(\bv(S^1,1))$ with $\ga_i\in C^1\cap W^{2,p}$ immersions and $\fp(V)<+\infty,\,\te_V\le C<+\infty$, then $V$ verifies the flux property.\\

\begin{remark} \label{rem1}
	Let $E$ be a set of finite perimeter in $\R^2$, let $\Ga=\cup_{i=1}^N (\ga_i)$ with $\ga_i\in C^1(S^1;\R^2)$ and regular for any $i$. Assume that $V_E:=\bv(\cF E, 1) = \sum_{i=1}^N (\ga_i)_\sharp(\bv(S^1,1))$. Then $\cH^1(\pa E\sm \cF E)=0$, and we can equivalently write $V_E=\bv(\pa E, 1)$.\\
	In fact by assumption $\cH^1$-almost every point $p\in \Ga$ is contained in $\cF E$, $\supp V_E=\Ga$, and $\Ga=\supp V_E=\supp (\cH^1\res\cF E) = \pa E$. Therefore $0=\cH^1(\Ga\sm\cF E)=\cH^1(\pa E\sm \cF E)$.\\
\end{remark}

\begin{lemma} \label{lem2}
	Assume $p>1$. If an integer rectifiable varifold $V=\bv(\Ga,\te_V)$ is such that $V=\sum_{i=1}^N(\ga_i)_\sharp(\bv(S^1,1))$ for some regular curves $\ga_i \in W^{2,p}(S^1;\R^2)$ and $\fp(V)<+\infty$, then $V$ has generalized curvature
	\beq \label{ref7'}
	k_V(p)=\frac{1}{\te_V(p)}\sum_{i=1}^N \sum_{t\in\ga_i^{-1}(p)} k_{\ga_i}(t)  \qquad\mbox{at }\cH^1\mbox{-ae }p\in \Ga,
	\eeq
	the generalized boundary $\si_V=0$, and
	\beq \label{ref7}
	\cE_p(V)= \sum_{i=1}^N\cE_p(\ga_i).
	\eeq
	In particular, since $k_V$ is unique, the value $\cE_p(V)$ is independent of the choice of the family of curves $\{\ga_i\}$ defining $V$.
\end{lemma}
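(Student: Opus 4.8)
The plan is to obtain $k_V$ and $\si_V$ by summing the first variations of the image varifolds $(\ga_i)_\sharp(\bv(S^1,1))$, and then to prove the energy identity \eqref{ref7} by showing that at $\cH^1$-almost every multiple point all branches passing through it carry the same curvature vector. A preliminary observation is that the equality $V=\sum_i(\ga_i)_\sharp(\bv(S^1,1))=\bv(\Ga,\te_V)$ is an equality of varifolds, i.e. of measures on $G(1,2)$; since the right-hand side is concentrated on $\{(p,T_p\Ga)\}$, for each $i$ the tangent line $[\ga_i'(t)]$ must coincide with $T_{\ga_i(t)}\Ga$ for a.e.\ $t$. In particular, along each $\ga_i$ the tangential divergence $\div_{T\Ga}X$ agrees $\cH^1$-a.e.\ with the curve's own tangential divergence, and $\sum_i\#\ga_i^{-1}(p)=\te_V(p)$ at $\cH^1$-a.e.\ $p$.

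First I would integrate $\div_{T\Ga}X$ over each closed curve and integrate by parts on $S^1$. As $S^1$ is closed there are no boundary terms, giving
\[
\int \div_{T\Ga}X\,d\mu_{(\ga_i)_\sharp(\bv(S^1,1))}=-\int_{S^1}\lgl k_{\ga_i},X\circ\ga_i\rgl\,ds_{\ga_i}.
\]
Summing over $i$ and rewriting the right-hand side with the area formula (grouping the parameters $t\in\ga_i^{-1}(p)$ and using $d\cH^1=\te_V^{-1}\,d\mu_V$ on $\Ga$) expresses $\int\div_{T\Ga}X\,d\mu_V$ as $-\int\lgl g,X\rgl\,d\mu_V$, where $g$ is exactly the density appearing on the right of \eqref{ref7'}. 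This $g$ lies in $L^1(\mu_V)$ because $\int|g|\,d\mu_V\le\sum_i\int|k_{\ga_i}|\,ds_{\ga_i}=\sum_i\cE_1(\ga_i)<+\infty$ by Hölder and finiteness of the lengths. Comparing with the defining identity of generalized curvature and boundary, and using uniqueness of the absolutely continuous and singular parts of the first variation, yields \eqref{ref7'} together with $\si_V=0$.

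It remains to prove \eqref{ref7}. Since $\te_V(p)$ is the number of summands, formula \eqref{ref7'} exhibits $k_V(p)$ as the arithmetic mean of the vectors $k_{\ga_i}(t)$, $t\in\ga_i^{-1}(p)$, so Jensen's inequality for $|\cdot|^p$ gives $\int|k_V|^p\,d\mu_V\le\sum_i\cE_p(\ga_i)$, with equality precisely when, at $\cH^1$-a.e.\ $p$, all these vectors coincide. To establish this coincidence I would prove the locality statement $k_{\ga_i}(t)=k_V(\ga_i(t))$ for a.e.\ $t$. Covering $\Ga$ by countably many arcs on which two branches are graphs $y=f(x)$, $y=g(x)$ over a common direction (possible since the curves are $C^1$), the contact set of the pair is $\{f=g\}$. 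Writing $h=f-g\in W^{2,p}\con W^{2,1}$ and applying twice the classical fact that the derivative of a Sobolev function vanishes a.e.\ on each of its level sets — first $h'=0$ a.e.\ on $\{h=0\}$, then $h''=0$ a.e.\ on $\{h'=0\}\supseteq\{h=0\}$ — gives $f'=g'$ and $f''=g''$ a.e.\ on $\{f=g\}$, hence the two branches share the same curvature vector $\cH^1$-a.e.\ on their overlap. Running this over the finitely many pairs of branches shows all branches through $\cH^1$-a.e.\ point carry the same curvature, which by \eqref{ref7'} equals $k_V(p)$. The area formula then gives
\[
\sum_i\cE_p(\ga_i)=\sum_i\int_{S^1}|k_V(\ga_i(t))|^p\,ds_{\ga_i}=\int_\Ga|k_V|^p\,\te_V\,d\cH^1=\cE_p(V),
\]
which is \eqref{ref7}; and since the generalized curvature of $V$ is unique in $L^p(\mu_V)$, the value $\cE_p(V)=\int|k_V|^p\,d\mu_V$ depends only on $V$, proving the last assertion.

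The hard part will be the equality case in Jensen's inequality, that is the locality of the curvature along overlaps: the first-variation computation by itself only yields the inequality $\cE_p(V)\le\sum_i\cE_p(\ga_i)$, and upgrading it to an equality genuinely requires the Sobolev level-set argument above, together with the bookkeeping needed to reduce arbitrary tangential overlaps of the branches to overlaps of graphs over a common axis. I would moreover use $p>1$ to guarantee, via Theorem \ref{propmon}, that $\te_V$ is a genuine pointwise density, so that \eqref{ref7'} holds pointwise and $k_V$ is unambiguously defined.
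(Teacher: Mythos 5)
Your proof is correct, and its first half is exactly the paper's computation: the first variation of each image varifold via integration by parts on the closed circle (no boundary terms), the area formula with $d\mu_V=\te_V\,d\cH^1\res\Ga$, and the uniqueness of the absolutely continuous and singular parts of the first variation, giving \eqref{ref7'} and $\si_V=0$ simultaneously. Where you genuinely diverge is the proof of \eqref{ref7}. The paper decomposes the multiplicity set $W=\{\te_V>1\}$ as $T\cup X\cup Y\cup Z$ (transversal crossings, non-Lebesgue points of the $\ga_i''$, tangential contacts with equal curvatures, tangential contacts with distinct curvatures at Lebesgue points) and proves that $T$ and $Z$ are \emph{countable}: inside a nice rectangle, a point of $T$ or $Z$ is shown to lie on the boundary of a connected component of the open set where two graph functions $f^i_r,f^i_s$ differ, and compactness of $\Ga$ upgrades local countability to global countability. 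You instead apply twice the classical Sobolev level-set fact to $h=f-g\in W^{2,p}$: $h'=0$ a.e.\ on $\{h=0\}$ and then $h''=0$ a.e.\ on $\{h'=0\}$, so $f'=g'$ and $f''=g''$ a.e.\ on the contact set; this is the locality-of-curvature argument in the spirit of \cite{LeMa09}, which the paper itself invokes elsewhere for the locality of generalized curvature. Your route is shorter, bypasses the nice-rectangle machinery (transversal contact points are isolated in each pairwise contact set, and in any case are killed by the a.e.\ conclusion $f'=g'$), and would even survive at $p=1$; what it gives up is the paper's stronger structural information that the bad sets are countable rather than merely $\cH^1$-null --- but null is all \eqref{ref7} needs, and your Jensen remark correctly isolates why the equality, not just the inequality, requires this step. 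Two points to tighten: the inclusion $\{h'=0\}\supseteq\{h=0\}$ is false verbatim and holds only up to a Lebesgue-null set (which suffices, but should be stated that way); and the countable covering must produce, at each tangential contact $\ga_i(t)=\ga_j(\tau)$, arcs of \emph{both} branches that are graphs over a \emph{common} (say rational) direction close to the shared tangent --- this is the ``bookkeeping'' you flag, and it is where the finiteness of the preimages $\ga_i^{-1}(p)$ (a consequence of $C^1$ regularity of the immersions) and the compactness of $S^1$ actually enter.
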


\begin{proof}
	In fact suppose first that $N=1$, and then call $\ga_1=\ga$. Up to rescaling, assume without loss of generality that $\ga$ is an arclength parametrization. By assumption $\ga\in C^{1,\al}$ for $\al\le\frac{1}{p'}$, and clearly $\Ga=(\ga)$ and $\mu_V=\te_V \cH^1\res(\ga)=\ga_\sharp(\cH^1\res S^1)$ (by the arclength parametrization assumption). 
	If $X\in C^1_c(\R^2;\R^2)$ is a vector field, using the area formula and the fact that $\te_V\ge 1$ $\cH^1$-ae on $\Ga$, we have
	\beqs
	\begin{split}
		\int \div_{T_p\Ga} X \, d\mu_V(p) 
		&= \int \div_{T_p(\ga)} X \, d\ga_\sharp(\cH^1\res S^1)(p)= \int_{S^1} \lgl \ga'(t), (\nabla X)_{\ga(t)}(\ga'(t)) \rgl \, dt =\\ &= \int_{S^1} \lgl \ga', (X\circ \ga)'\rgl\,dt=  - \int_{S^1} \lgl \ga''(t), X(\ga(t)) \rgl \, dt = \\
		& = - \int \int_{\ga^{-1}(p)} \lgl \ga''(t),X(\ga(t))\rgl \,d\cH^0\,d\cH^1\res\Ga(p) =\\
		&= -\int \bigg\lgl X(p), \int_{\ga^{-1}(p)} k_\ga(t) \,d\cH^0 \bigg\rgl \frac{\te_V(p)}{\te_V(p)} \, d\cH^1\res \Ga (p)=\\
		& = - \int \bigg\lgl X(p), \frac{1}{\te_V(p)} \sum_{t\in\ga^{-1}(p)} k_\ga(t) \bigg\rgl \, d\mu_V(p).
	\end{split}
	\eeqs
	If now $N>1$, by linearity of the first variation we get
	\beqs
	\begin{split}
		\int \div_{T_p\Ga} X \, d\mu_V(p) 
		&=- \sum_{i=1}^N \int \bigg\lgl X(p),  \sum_{t\in\ga_i^{-1}(p)} k_{\ga_i}(t) \bigg\rgl \, d\cH^1\res(\ga_i)(p) =\\
		&= - \int \bigg\lgl  X(p), \frac{1}{\te_V(p)}\sum_{i=1}^N \sum_{t\in\ga_i^{-1}(p)} k_{\ga_i}(t) \bigg\rgl \te_V(p) \,d\cH^1\res\big(\cup_{i=1}^n(\ga_i)\big) =\\
		&=- \int \bigg\lgl  X(p), \frac{1}{\te_V(p)}\sum_{i=1}^N \sum_{t\in\ga_i^{-1}(p)} k_{\ga_i}(t) \bigg\rgl \,d\mu_V.
	\end{split}
	\eeqs
	Now we want to prove \eqref{ref7}. Let us consider the set $W=\{ p\in\Ga\,|\,\te_V(p)>1 \}$. Up to redefining some $\ga_i$ on another circumference, we can suppose from now on that $\ga_i$ is an arclength parametrization. We can write $W=T\cup X\cup Y\cup Z$, with
	\beqs
	\begin{split}
		&T=\big\{ p\in W\,|\, \exists i,j,t,\tau:\,\,\ga_i(t)=\ga_j(\tau)=p,\,\ga_i'(t)\neq \al\ga_j'(\tau) \,\,\forall  \al\in\R   \big\},\\
		&X=\big\{ p\in W\sm T\,|\, \exists i,t:\,\,\ga_i(t)=p,\,t \mbox{ is not a Lebesgue point of }\ga_i'' \big\},\\
		&Y=\big\{ p\in W\sm (T\cup X)\,|\,    	\forall i,j,t,\tau:\,\,\ga_i(t)=\ga_j(\tau)=p \,\Rightarrow\, \ga_i''(t)=\ga_j''(\tau) \big\},\\
		&Z=\big\{ p\in W\sm (T\cup X)\,|\,    	\exists i,j,t,\tau:\,\,\ga_i(t)=\ga_j(\tau)=p ,\, \ga_i''(t)\neq\ga_j''(\tau) \big\}.
	\end{split}
	\eeqs
	We are going to prove that $T,Z$ are at most countable, then since $\cH^1(X)=0$ we will get that $\cH^1(W)=\cH^1(Y)$. Hence by \eqref{ref7'} one immediately gets \eqref{ref7}.\\
	Let $p\in \Ga$ and $C\in \N$ such that $\te_V\le C$ . Let $v_1(p),...,v_k(p)\in S^1$ with $k=k(p)\le C$ such that if $\ga_i(t)=p$ then $\ga_i'(t)$ is proportional to some $v_j$. For any $i=1,...,k$ let $R_{v_i}(p)$ be a nice rectangle at $p$ for the curves $\{\al_j\}_{j\in J(i)}$ which are suitable restrictions of the curves $\{\ga_i\}$. Then let $f^i_1,...,f^i_l$ with $l=l(i)$ be $C^1$ functions $f_s^i:[-a_i,a_i]\to(-b_i,b_i)$ given by the definition of nice rectangle.\\
	Let $q\in \cup_{s=1}^l graph(f^i_s)$, and assume $q\in T$. If $a_i$ is chosen sufficiently small, the fact that $q$ belongs to $T$ means that the transversal intersection happens between some of the curves $\{\al_j\}_{j\in J(i)}$. This means that there is some $\de_q>0,x_q\in (-a_i,a_i),r,s\in\{1,...,l\}$ such that
	\beqs
		f^i_r(x_q)=f^i_s(x_q),\quad(x_q,f^i_r(x_q))=q,\quad graph\big(f^i_r|_{(x_q-\de_q,x_q+\de_q)}\big) \cap graph\big(f^i_s|_{(x_q-\de_q,x_q+\de_q)}\big)=\{q\}.
	\eeqs
	Letting $A_i=\{x\in(-a_i,a_i)\,|\,f^i_r\neq f^i_s\}$, which is open, we see that $x_q$ belongs to the boundary of some connected component of $A_i$. This implies that $T\cap \big(\cup_{s=1}^l graph(f^i_s)\big)$ is countable, and this is true for any $i=1,...,k(p)$.\\
	For any $p\in\Ga$ take a ball $B_{r(p)}(p)\con \cap_{i=1}^{k(p)} R_{v_i(p)}(p)$ for suitable rectangles $R_{v_i(p)}(p)$ as above. Then $T\cap B_{r(p)}(p)$ is countable. Since $\Ga$ is compact, taking a finite cover of such balls $B_{r(p_1)}(p_1),...,$ $B_{r(p_L)}(p_L)$, we conclude that $T$ is countable.\\
	
	\noindent Consider now $q\in \cup_{s=1}^l graph(f^i_s)$, and assume $q\in Z$. If $a_i$ is chosen sufficiently small, the fact that $q$ belongs to $Z$ means that the tangential intersection happens between some of the curves $\{\al_j\}_{j\in J(i)}$. Hence at some $x_q\in (-a_i,a_i)$ for some $r,s\in \{1,...,l\}$ we find that $x_q$ is a Lebesgue point for $(f^i_r)''$ and $(f^i_s)''$, and
	\beqs
		f^i_r(x_q)=f^i_s(x_q),\quad(x_q,f^i_r(x_q))=q,\quad (f^i_r)''(x_q)\neq (f^i_s)''(x_q).
	\eeqs
	This implies that there exists $\ep>0$ such that for any $0<|t|<\ep$ we have $(f^i_r)'(x_q+t)\neq (f^i_s)'(x_q+t)$. By continuity of the first derivative we have that, for example, $(f^i_r)'(x_q+t)>(f^i_s)'(x_q+t)$ for any $0<|t|<\ep$, and therefore $ f^i_r(x_q+t)>f^i_s(x_q+t)$ for any $0<|t|<\ep$. So we find that $x_q$ belongs to the boundary of a connected component of an $A_i$ defined as above as in the case of the set $T$. Arguing as before we eventually get that $Z$ is countable.\\
\end{proof}

\begin{lemma} \label{lemV=V_E}
	Let $\ga_1,...,\ga_N:S^1\to\R^2$ be Lipschitz curves and let $V=\bv(\Ga,\te_V)=\sum_{i=1}^N (\ga_i)_\sharp(\bv(S^1,1))$. Assume that $\cH^1(\{x\,|\,\te_V(x)>1 \})=0$, and define
	\beq \label{ref9}
		E=\bigg\{ p\in\R^2\sm\Ga\,:\, \bigg|\sum_{i=1}^N \ind_{\ga_i}(p)\bigg| \mbox{ is odd}  \bigg\}.
	\eeq
	Then $V=V_E:=\bv(\cF E,1)$, and $E$ is uniquely determined by $V$, i.e. if $V=\sum_{i=1}^N (\ga_i)_\sharp(\bv(S^1,1))=\sum_{i=1}^M (\si_i)_\sharp(\bv(S^1,1))$ then the corresponding set $E$ defined using \eqref{ref9} with the family $\{\ga_i\}$ is the same set defined using \eqref{ref9} with the family $\{\si_i\}$.
\end{lemma}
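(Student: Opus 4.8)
The plan is to reduce the whole statement to a description of the reduced boundary of $E$ in terms of the winding numbers of the curves, exploiting the theory of currents. First I would recall that each closed Lipschitz curve $\ga_i:S^1\to\R^2$ carries an integer multiplicity $1$-current $T_i$, which is a cycle since $\pa T_i=0$; as every integral $1$-cycle in $\R^2$ bounds, there is an integer-valued function $u_i:=\ind_{\ga_i}\in BV(\R^2;\Z)$ — the winding number of $\ga_i$, defined $\cH^2$-a.e.\ because $\cH^2((\ga_i))=0$ — whose distributional gradient $Du_i$ is a Radon measure concentrated on $(\ga_i)$ and which realizes the filling of $T_i$. Setting $w:=\sum_{i=1}^N u_i\in BV(\R^2;\Z)$ and recalling that $\Ga=\cup_i(\ga_i)$ is $1$-rectifiable with $\cH^1(\Ga)<+\infty$ and $\cH^2(\Ga)=0$, the set in \eqref{ref9} coincides, up to the Lebesgue-null set $\Ga$, with the parity set $F:=\{w\ \text{odd}\}$. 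Since both $\cF E$ and $V_E$ depend only on the Lebesgue class of $E$, it suffices to work with $F$.

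Next I would prove $V=V_E$. An integer-valued $BV$ function has vanishing absolutely continuous and Cantor parts, so $Dw$ is concentrated on the jump set $J_w\con\Ga$. At $\cH^1$-a.e.\ $p\in\Ga$ the hypothesis $\cH^1(\{\te_V>1\})=0$ forces exactly one curve to pass through $p$ with multiplicity one; near such a $p$ all other winding numbers are locally constant while the one of the passing curve jumps by $\pm1$, so $w$ has a jump of odd height there. Hence the parity flips across $\cH^1$-a.e.\ point of $\Ga$, i.e.\ $J_w=\Ga$ up to $\cH^1$-null sets, and the $\{0,1\}$-valued function $\chi_F$ jumps by exactly $1$ on the same set. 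This shows $F$ has finite perimeter with $\cF F=\Ga$ up to $\cH^1$-null and $|D\chi_F|=\cH^1\res\Ga$, so that $V_E=\bv(\cF F,1)=\cH^1\res\Ga=\bv(\Ga,1)=V$, the last equality using $\te_V=1$ $\cH^1$-a.e. Note this argument is symmetric in the chosen decomposition of $V$.

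For the uniqueness I would compare two representations $V=\sum_i(\ga_i)_\sharp(\bv(S^1,1))=\sum_j(\si_j)_\sharp(\bv(S^1,1))$, yielding parity sets $F_1$ and $F_2$. By the previous step both have reduced boundary $\Ga$ (up to $\cH^1$-null), both jump across $\cH^1$-a.e.\ point of $\Ga$, and both vanish on the unbounded component of $\R^2\sm\Ga$ (where all winding numbers are $0$). Now $\chi_{F_1\De F_2}=\chi_{F_1}\oplus\chi_{F_2}$ (addition modulo $2$) is invariant under simultaneously flipping $\chi_{F_1}$ and $\chi_{F_2}$; therefore at $\cH^1$-a.e.\ point of $\Ga$ its two one-sided traces agree, and since $\cF(F_1\De F_2)\con\Ga$ up to $\cH^1$-null this gives $\cH^1(\cF(F_1\De F_2))=0$, i.e.\ $P(F_1\De F_2)=0$. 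A finite-perimeter subset of the connected space $\R^2$ with zero perimeter is Lebesgue-null or co-null; as $F_1\De F_2$ omits the unbounded component it must be null, whence $F_1=F_2$ up to Lebesgue-null sets.

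The main obstacle I expect is the rigorous identification carried out in the first two paragraphs: verifying that $\ind_{\ga_i}$ is a genuine integer-valued $BV$ function for merely Lipschitz curves with $Du_i$ concentrated on $(\ga_i)$, and upgrading the pointwise ``$\pm1$ jump at $\cH^1$-a.e.\ point'' statement — which rests entirely on $\cH^1(\{\te_V>1\})=0$ — into the exact identity $|D\chi_F|=\cH^1\res\Ga$, so that $\cF F=\Ga$ with unit multiplicity. Once this is established, both $V=V_E$ and the uniqueness follow directly from the jump structure of $w$ and the elementary perimeter argument above.
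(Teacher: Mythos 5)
Your proposal is correct in substance but takes a genuinely different route from the paper's. For $V=V_E$ the paper argues directly at the set level: it introduces $X=\{p\in\Ga\,:\,\te_V(p)=1,\ \text{every curve through }p\text{ differentiable there}\}$ and proves $\cH^1(\cF E\,\De\,X)=0$ with two elementary ingredients --- Rademacher's theorem for $\cH^1(X\sm\cF E)=0$, and, for the converse, the observation that two distinct Lipschitz arcs through a point would force the one-dimensional density of $\cF E$ there to be at least $2-\ep$, contradicting that reduced boundary points have density $1$. You instead pass through integral currents and $BV$ fillings: $w=\sum_i \ind_{\ga_i}\in BV(\R^2;\Z)$, the boundary of the $2$-current induced by $w$ equals $\sum_i(\ga_i)_\sharp([|S^1|])$, and the trace-jump of $w$ is identified with the signed multiplicity of this boundary current, which is $\pm1$ at $\cH^1$-a.e.\ point of $\Ga$ because $\te_V=1$ a.e.; to make the ``single passage $\Rightarrow$ jump $\pm1$'' step honest you should also discard, via the area formula, the $\cH^1$-null set of points whose unique preimage has vanishing or nonexistent derivative (this plays the role of the paper's Rademacher step). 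For uniqueness the divergence is sharper: the paper uses mod-$2$ degree theory together with the identity \eqref{ref18} --- the varifold determines the a.e.\ preimage counts of the radial projections $\ga_i/|\ga_i|$ --- so the parity of $\sum_i\ind_{\ga_i}(p)$ is read off from $V$ at \emph{every} $p\notin\Ga$; you instead get $P(F_1\De F_2)=0$ from the agreement of the one-sided traces of $\chi_{F_1}\oplus\chi_{F_2}$ across $\Ga$ and conclude by connectedness of $\R^2$ and boundedness. Both routes work; the paper's is more elementary and self-contained (area formula, Rademacher, degree mod $2$), while yours buys the stronger intermediate identity $|D\chi_E|=\cH^1\res\Ga$ at the cost of heavier machinery (existence and uniqueness of the $BV$ filling, the jump structure of integer-valued $BV$ functions, Federer's criterion). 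One small point to finish: your uniqueness argument yields $F_1=F_2$ only up to Lebesgue-null sets, whereas the lemma asserts that the two sets $E$ coincide exactly; this is repaired in one line, since the winding numbers are locally constant on the open set $\R^2\sm\Ga$, so the odd-parity sets $F_j'=\{p\in\R^2\sm\Ga\,:\,\sum_i\ind(p)\ \text{odd}\}$ are open, two open sets equal a.e.\ are equal, and hence $E_1=F_1'\cup\Ga=F_2'\cup\Ga=E_2$.
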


\begin{proof}
	The set $E$ is open and bounded and $\pa E=\Ga$, hence $E$ is a set of finite perimeter.
	Let us first check that if $V=\sum_{i=1}^N (\ga_i)_\sharp(\bv(S^1,1))=\sum_{i=1}^M (\si_i)_\sharp(\bv(S^1,1))$, then the definition of $E$ by \eqref{ref9} is independent of the choice of the family of curves. The fact that a point $p\in\R^2\sm\Ga$ belongs to $E$ depends on the residue class
	\beqs
		\bigg(\sum_{i=1}^N \ind_{\ga_i}(p)\bigg) \quad\mbox{mod }2, \qquad\qquad \mbox{or }\quad \bigg(\sum_{i=1}^M \ind_{\si_i}(p)\bigg) \quad\mbox{mod }2.
	\eeqs
	Without loss of generality we think that $p=0$. Since the curves $\{\ga_i\},\{\si_i\}$ define the same varifold, for $\cH^1$-ae point $q\in \{(x,y)\in\R^2\,|\,x^2+y^2=1\}$ we have that
	\beq \label{ref18}
		\sum_{i=1}^N\sharp\bigg(\frac{\ga_i}{|\ga_i|}\bigg)^{-1}(q)=\sum_{i=1}^M\sharp\bigg(\frac{\si_i}{|\si_i|}\bigg)^{-1}(q).
	\eeq
	In the following we denote by $deg(f,y)$ the degree of a map $f$ at $y$ and by $deg_2(f,y)$ the degree mod $2$ of $f$ at $y$ (we refer to \cite{Mi65}). Since the curves are Lipschitz almost every point $q\in \{(x,y)\in\R^2\,|\,x^2+y^2=1\}$ is a regular value for $\frac{\ga_i}{|\ga_i|},\frac{\si_i}{|\si_i|}$ and we can perform the calculation
	\beqs
		\begin{split}
			\bigg(\sum_{i=1}^N \ind_{\ga_i}(p)\bigg) \quad\mbox{mod }2 &
			 = \bigg(\sum_{i=1}^N deg\bigg(\frac{\ga_i}{|\ga_i|},q\bigg)\bigg) \quad\mbox{mod }2 = \bigg(\sum_{i=1}^N deg_2\bigg(\frac{\ga_i}{|\ga_i|},q\bigg)\bigg) \quad\mbox{mod }2=\\
			 &= \bigg(\sum_{i=1}^N\sharp\bigg(\frac{\ga_i}{|\ga_i|}\bigg)^{-1}(q) \quad\mbox{mod }2\bigg) \qquad \mbox{mod }2,
		\end{split}
	\eeqs
	that together with the same expression using the curves $\si_i$, implies that $E$ is uniquely defined by \eqref{ref18}.\\
	Now we prove that $V=V_E$. Let
	\beqs
		X=\{p\in\Ga\,|\, \te_V(p)=1,\,\,\ga_i(t)=p\,\Rightarrow\, \ga_i \mbox{ is differentiable at } t \}.
	\eeqs
	We want to prove that
	\beq \label{ref10}
		\cH^1\big( \cF E \De X  \big)=0,
	\eeq
	which implies that $V=V_E$.\\
	If $\ga_i(t)=p\in X$, then there is $\ep>0$ such that $\ga_i\big((t-\ep,t+\ep)\big)\con \{\te_V=1\}\con \Ga=\pa E$. By Rademacher we therefore have that $\cH^1 (X \cap \ga_i\big((t-\ep,t+\ep)\big) \sm \cF E)=0$. Hence $\cH^1(X\sm\cF E)=0$.\\
	Now let $p\in \cF E$, we want to prove that $\cH^1(\cF E\sm X)=0$, and this will complete the claim \eqref{ref10}. If $\te_V(p)=1$ only a curve passes (once) trough $p$, say $\ga_1(t_1)=p$, and since $p\in\cF E$ such curve has to be differentiable at $t_1$. 
	Conversely if $p=\ga_i(t_i)$ for some $\{i,t_i\}$'s, assuming that each $\ga_i$ is differentiable at $t_i$, we want to prove that $\te_V(p)=1$. Suppose by contradiction that $\te_V(p)>1$, then there are $\al,\be:(-\ep,\ep)\to\Ga$ Lipschitz different arcs such that $\al(0)=\be(0)=p$ and $\al,\be$ are differentiable at time $0$; moreover the hypothesis $\cH^1(\{x\,|\,\te_V(x)>1 \})=0$ implies that $\cH^1\big((\al)\cap(\be)\big)=0$. Therefore $\cH^1$-ae point $p\in (\al)\cup(\be)$ is contained in $X$, and thus $\cH^1$-ae point $p\in (\al)\cup(\be)$ is contained in $\cF E$, since we already know that $\cH^1(X\sm\cF E)=0$. So for any $\ep>0$ there is $r>0$ such that
	\beqs
		\cH^1\big( [(\al)\cup (\be)]\cap B_r(p) \big)\ge (2-\ep)2r,
	\eeqs
	and thus
	\beqs
		\cH^1(\cF E\cap B_r(p))\ge \cH^1\big( [(\al)\cup (\be)]\cap B_r(p) \big)\ge (2-\ep)2r,
	\eeqs
	which is a contradiction with the fact that any point in $\cF E$ has one dimensional density equal to $1$.\\
	So we have proved that a point $p\in\cF E$ verifies that: if $\te_V(p)=1$ then $p\in X$, and if any curve passing through $p$ at some time is differentiable at that time then $p\in X$. In any case we conclude that $\cH^1$-almost every point in $\cF E$ belongs to $X$, and then $\cH^1(\cF E\sm X)=0$.
\end{proof}


\section{Relaxation} \label{relaxation}

\subsection{Setting and results} From now on and for the rest of Section \ref{relaxation} let $p>1$ be fixed. For any measurable set $E\con \R^2$ we define the energy
\beq \label{ref6}
	\cF_p(E)=\begin{cases}
		\mu_{V_E}(\R^2)+\cE_p(V_E)  & \begin{split}
							\mbox{if }  \,&V_E=\sum_{i\in I} (\ga_i)_\sharp (\bv(S^1,1)),\quad\ga_i:S^1\to\R^2 \,\,C^2\mbox{-immersion}, \\ &\sharp I<+\infty,		
		\end{split}\\
		+\infty & \mbox{otherwise.}
	\end{cases}
\eeq
We write $\cF_p(E)$ understanding that $\cF_p$ is defined on the set of equivalence classes of characteristic functions endowed with $L^1$ norm. We want to calculate the relaxed functional $\overline{\cF_p}$ with respect to the $L^1$ sense of convergence of characteristic functions.\\
By Remark \ref{rem1} and Lemma \ref{lem2}, if $\fp(E)<\infty$, we have that
\beqs
	\cH^1(\pa E\sm\cF E)=0, \qquad \fp(V_E)=\sum_{i\in I} \fp (\ga_i),
\eeqs
if $V_E=\sum_{i\in I} (\ga_i)_\sharp (\bv(S^1,1))$. Also up to replacing $E$ with its complement, we can suppose that $E$ is bounded.\\

\noindent If $E\con \R^2$ is measurable, we define
\beqs
\begin{split}
	\cA(E)=\bigg\{ V=\bv(\Ga,\te_V)= \sum_{i\in I} (\ga_i)_\sharp (\bv(S^1,1)) \,\,\bigg|\,\, & \ga_i:S^1\to\R^2 \,\,C^{1}\cap W^{2,p}\mbox{-immersion},\,\, \sharp I<+\infty, \\ &  \sum_{i\in I}\fp(\ga_i)<+\infty, \\
	& \pa E\con \Ga,\,\, V_E \le V,\\ &  \cF E\con\{ x\in\R^2\,|\,\te_V(x) \mbox{ is odd}\}, \\ & \cH^1( \{ x \,|\, \te_V(x) \mbox{ is odd} \} \sm \cF E )=0  \bigg\},
\end{split}
\eeqs

\begin{remark}
	Observe that if $V\in\cA (E)$, then $\fp(V)<+\infty$, and then $\te_V(x)=\lim_{\ro \searrow 0 }\frac{\mu_V(B_\ro(x))}{2\ro}$ exists and it is uniformly bounded on $\Ga$. Moreover the condition $\pa E\con \Ga$ and the bound on the energy of the curves imply that $\cH^1(\pa E)<\infty$, and then $E$ is a set of finite perimeter.\\
\end{remark}

\noindent The main result of the section is the following.
\begin{thm} \label{thmmain}
	For any measurable set $E\con\R^2$ we have that
	\beq \label{main}
		\rfp(E)=\begin{cases}
					+\infty &  \begin{split}\mbox{if }\, 
						& \cA(E)=\epty \,\,\mbox{ or }\,\,E \mbox{ is ess. unbounded},
					\end{split}\\
					\min\big\{ \fp(V)\,\,|\,\,V\in\cA(E) \big\} & \mbox{otherwise},
		\end{cases}
	\eeq
	where we say that a set $E$ is essentially unbounded if $|E\sm B_r(0)|>0$ for any $r>0$.
\end{thm}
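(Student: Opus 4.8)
The plan is to compute the relaxation through the two standard inequalities: a lower bound $\rfp(E)\ge \text{RHS}$ (the $\liminf$ inequality, valid for every sequence $E_n\to E$ in $L^1$) and the construction of a recovery sequence giving $\rfp(E)\le\text{RHS}$; the existence of the minimizer in \eqref{main} will come for free from the compactness used in the lower bound. I would first dispose of the two $+\infty$ cases simultaneously. Suppose $E_n\to E$ in $L^1$ with $\fp(E_n)\le C$; then each $E_n$ is regular with $V_{E_n}=\sum_i(\ga_i^{(n)})_\sharp(\bv(S^1,1))$ and, by \eqref{ref7} and Lemma \ref{lem0}, each generating loop satisfies $\cE_p(\ga_i^{(n)})\ge (2\pi)^p L(\ga_i^{(n)})^{1-p}$, so (for $p>1$) a uniform energy bound forces a uniform lower bound on the length of each loop, a uniform bound on their number, and a uniform bound on each diameter. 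Together with $L^1$-convergence this confines the boundaries to a fixed ball up to complementation, so the limit cannot be essentially unbounded, and the compactness argument below produces an element of $\cA(E)$; hence whenever $\cA(E)=\epty$ or $E$ is essentially unbounded no bounded-energy sequence can converge to $E$ and $\rfp(E)=+\infty$.

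For the $\liminf$ inequality take $E_n\to E$ with $\fp(E_n)\to\ell:=\liminf_n\fp(E_n)<+\infty$ (otherwise there is nothing to prove). By Lemma \ref{lem2} we have $\si_{V_{E_n}}=0$ and $\mu_{V_{E_n}}(\R^2)+\cE_p(V_{E_n})\le C$; since the masses are uniformly bounded, up to a subsequence $V_{E_n}\to V$ as $1$-varifolds. By the compactness and lower semicontinuity theory for curvature varifolds, $V$ is integer rectifiable, has generalized curvature in $L^p$ with $\si_V=0$, and $\mu_V(\R^2)+\cE_p(V)\le\ell$. The crucial point is then to show $V\in\cA(E)$. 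Using the uniform density bound of Theorem \ref{propmon} (so $\te_V$ is integer valued and bounded), the $C^1\cap W^{2,p}$-regularity provided by the $L^p$-curvature, and the flux property of Definition \ref{defflux} (which passes to the limit because each $V_{E_n}$ is carried by closed curves), I would parametrize $\Ga=\supp V$ by finitely many closed $C^1\cap W^{2,p}$ immersions to obtain $V=\sum_i(\ga_i)_\sharp(\bv(S^1,1))$. Finally $\pa E\con\Ga$ and $V_E\le V$ follow from $L^1$-convergence and lower semicontinuity, while the parity identity $\cF E=\{x\,|\,\te_V(x)\text{ odd}\}$ up to $\cH^1$-null sets is obtained exactly as in Lemma \ref{lemV=V_E}, passing the mod-$2$ winding numbers of $E_n$ to the limit. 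This yields $\ell\ge\fp(V)\ge\min_{\cA(E)}\fp$.

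For the recovery sequence fix $V\in\cA(E)$ attaining the minimum and write $V=\sum_i(\ga_i)_\sharp(\bv(S^1,1))$. I would approximate each immersion $\ga_i$ by smooth immersions converging to it in $C^1$ and in $W^{2,p}$, so that $\fp(\ga_i^{(n)})\to\fp(\ga_i)$. Where $V$ carries even multiplicity (the ``hidden'' boundary, as in Fig. \ref{figotto}) the overlapping arcs must be split into nearby almost-parallel $C^2$ arcs enclosing regions of vanishing area, so that the reduced-boundary varifolds $V_{E_n}$ associated by Lemma \ref{lemV=V_E} still converge to $V$ with the correct multiplicities. By the $C^0$-convergence of the curves the mod-$2$ winding rule \eqref{ref9} gives sets $E_n\to E$ in $L^1$ (here the defining conditions of $\cA(E)$ guarantee that the odd-density region of the limit is exactly $\cF E$), and by Lemma \ref{lem2} we get $\fp(E_n)\to\sum_i\fp(\ga_i)=\fp(V)$, whence $\rfp(E)\le\fp(V)=\min_{\cA(E)}\fp$.

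The main obstacle is the structural step in the lower bound: showing that the varifold limit $V$ is genuinely an elastic varifold, i.e. that it can be represented by finitely many closed $C^1\cap W^{2,p}$ immersions and that its odd-multiplicity set coincides $\cH^1$-a.e. with $\cF E$. The parametrization requires upgrading the local graph decomposition (available from the density and curvature bounds) to a global family of closed loops by gluing arcs at the finitely many multiplicity points in a way compatible with the flux property, and the parity bookkeeping must be carried through the varifold convergence with the degree-theoretic argument of Lemma \ref{lemV=V_E}; this is considerably more delicate than the lower semicontinuity estimate and the recovery construction.
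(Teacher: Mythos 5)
Your architecture is the paper's: a liminf inequality via compactness of the generating curves (the paper's Lemma \ref{lemmanecess}), a recovery sequence built by smoothing a minimizing varifold (Proposition \ref{propapprox}), and existence of the minimum from the same compactness (Corollary \ref{cormin}). But the decisive structural step of the lower bound --- showing the limit object lies in $\cA(E)$ --- is precisely where your proposal has a genuine gap, which you yourself flag as the ``main obstacle'' without resolving. Concretely: (i) you pass to an abstract varifold limit and then try to re-parametrize $\supp V$ by finitely many closed $C^1\cap W^{2,p}$ loops, invoking a ``flux property which passes to the limit''; neither the gluing of local graphs into closed loops nor the limit-stability of the flux property is proved, and this detour is unnecessary. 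Since $\fp(E_n)\le C$ forces, via \eqref{ineq1}, a fixed finite number of generating curves with lengths bounded above \emph{and below}, the arclength parametrizations $\si_{i,n}$ are bounded in $W^{2,p}$ and converge (up to subsequence) strongly in $C^{1,\al}$ and weakly in $W^{2,p}$ to closed curves $\si_i$, so the representation $V=\sum_i(\ga_i)_\sharp(\bv(S^1,1))$ of the limit comes for free; this is how Lemma \ref{lemmanecess} actually starts, and it sidesteps your obstacle entirely. (ii) The parity identification is \emph{not} obtained ``exactly as in Lemma \ref{lemV=V_E}'': that lemma concerns a single varifold given by curves and its associated set, not limits. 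The inclusion $\cF E\con\{\te_V \mbox{ odd}\}$ needs the transversal-segment index argument, and the harder half, $\cH^1(\{\te_V \mbox{ odd}\}\sm\cF E)=0$ (no odd-multiplicity ghost boundary survives), requires the current-theoretic machinery of the paper's proof: $\pa[|E_n|]\to\pa[|E|]$, decomposition into countably many uniformly Lipschitz loops $\al_{i,n}$, Go\l ab's theorem on the Hausdorff limits of the connected components, stabilization of the indices $\ind_{\al_{i,n}}$, and comparison of the boundary-current multiplicity $m$ in \eqref{ref5} with the parity of $\te_V$. Nothing in your sketch substitutes for this.

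Two further points. First, the paper's compactness lemma carries the extra hypothesis that $\{p\,|\,\te_{V_{E_n}}(p)>1\}$ is finite, which an arbitrary finite-energy sequence need not satisfy (tangential self-touchings can accumulate); the paper's final assembly therefore first replaces each competitor by a nearby set with analytic boundary via Proposition \ref{propapprox} and a diagonal argument, a reduction absent from your liminf step, where you apply compactness to arbitrary sequences directly. Second, in your recovery sequence the ``splitting of overlapping arcs into almost-parallel arcs'' is ad hoc: you would still have to check that the surgery preserves the parity data, that the resulting sets satisfy the mod-$2$ rule, and that they converge in $L^1$ to $E$. The paper avoids all of this by approximating each $\ga_i$ with \emph{analytic} immersions (so the self-intersection set \eqref{ref19} is automatically finite, with no splitting needed), defining $E_n$ by the index formula \eqref{ref9} of Lemma \ref{lemV=V_E}, and identifying the limit with $E$ by Riesz--Fr\'echet--Kolmogorov compactness together with Lemma \ref{lemindici2}; your parenthetical appeal to the defining conditions of $\cA(E)$ is exactly the content of that lemma and should be cited and used as such.
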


\noindent The proof of Theorem \ref{thmmain} will be completed in Subsection \ref{subsproofmain}.\\

\begin{remark}
	Choosing for a measurable set $E$ the $L^1$ representative defined in \eqref{ax}, then the set $E$ is essentially unbounded if and only if it is unbounded. So in the statement of Theorem \ref{thmmain} one can actually write unbounded in place of essentially unbounded.\\
\end{remark}

\begin{remark}
	The characterization of $\rfp$ given by Theorem \ref{thmmain} immediately implies the stability property that
	\beq
		\fp(E)<+\infty \qquad\Rightarrow\qquad\rfp(E)=\fp(E)<+\infty.
	\eeq
	In fact if $\fp(E)<+\infty$, then $V_E\in \cA(E)$. Consider any $W=\bv(\Ga,\te_W)\in\cA(E)\sm\{V_E\}$, then by definition we have that $V_E\le V$ in the sense of measures and $\cF E\con \{x\,|\,\te_W(x) \mbox{ is odd}\}$, and this implies that $\cH^1(\cF E\sm \Ga)=0$. Therefore $\mu_W(\R^2)\ge \cH^1(\cF E)=\mu_{V_E}(\R^2)$, and also $\cE_p(W)\ge \cE_p(V_E)$ by locality of the generalized curvature (\cite{LeMa09}).\\
\end{remark}

\noindent We conclude this part showing some properties of varifolds $V\in\cA (E)$.

\begin{lemma} \label{lemindici2}
	Let $E\con\R^2$ be a bounded set of finite perimeter. Let $V=\bv(\Ga,\te_V)=\sum_{i=1}^N (\ga_i)_\sharp(\bv(S^1,1))$ with $\ga_1,...,\ga_N:S^1\to\R^2$ Lipschitz curves. Suppose that $\cF E\con\Ga$ and
	\beqs
		\cH^1(\cF E \,\De\, \{x\,\,|\,\,\te_V(x) \mbox{ is odd}\})=0.
	\eeqs
	Then
	\beqs
		E=\bigg\{ p\in\R^2\sm\Ga\,:\, \bigg|\sum_{i=1}^N \ind_{\ga_i}(p)\bigg| \mbox{ is odd}  \bigg\}.
	\eeqs
\end{lemma}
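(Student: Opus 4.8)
The plan is to set $u:=\sum_{i=1}^N\ind_{\ga_i}$, an integer valued, compactly supported function that is locally constant on $\R^2\sm\Ga$, and to write the candidate set as $E^*:=\{p\in\R^2\sm\Ga : |u(p)|\text{ is odd}\}\cup\Ga$, so that off $\Ga$ one has $\chi_{E^*}=u\bmod 2$. Since $E$ is bounded and $\pa E=\overline{\cF E}\con\overline{\Ga}=\Ga$, both $\chi_E$ and $\chi_{E^*}$ are $\{0,1\}$-valued $BV$ functions that are locally constant on $\R^2\sm\Ga$ and vanish near infinity. The goal reduces to proving $|E\De E^*|=0$; the set equality asserted in the statement then follows because, under \eqref{ax}, $E$ and $E^*$ are the closed representatives of the same Lebesgue class.

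The heart of the argument is to identify the reduced boundary of $E^*$, namely to show
\[
\cH^1\big(\cF E^*\,\De\,\{x : \te_V(x)\text{ is odd}\}\big)=0 .
\]
For this I would use the crossing rule for the winding number. At $\cH^1$-almost every $x\in\Ga$ there is an approximate tangent line and exactly $\te_V(x)=\sum_i\sharp\,\ga_i^{-1}(x)$ strands of the curves pass through $x$; crossing $\Ga$ transversally near $x$ changes each relevant $\ind_{\ga_i}$ by $\pm1$, so $u$ changes by a sum of $\te_V(x)$ terms equal to $\pm1$, whence $u\bmod 2$ jumps across $x$ if and only if $\te_V(x)$ is odd. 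Equivalently, the jump set of $\chi_{E^*}$ is exactly $\{\te_V\text{ odd}\}$, which is the claim. This step is the main obstacle: one must make the informal ``$\te_V(x)$ strands, each crossed once'' rigorous for merely Lipschitz curves at $\cH^1$-a.e.\ point. I expect to handle it with the $\deg_2$ computation already carried out in the proof of Lemma \ref{lemV=V_E} (choosing regular values of the maps $\ga_i/|\ga_i|$ and using the degree mod $2$), which reduces the parity of $u$ at a point to the parity of the number of preimages, i.e.\ to $\te_V\bmod 2$.

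With this in hand the conclusion is quick. The hypothesis gives $\cH^1(\cF E\,\De\,\{\te_V\text{ odd}\})=0$, so $\cF E$ and $\cF E^*$ agree $\cH^1$-almost everywhere, both coinciding with $\{\te_V\text{ odd}\}$. Off this common boundary both $\chi_E$ and $\chi_{E^*}$ are locally constant, so $\cF(E\De E^*)$ can charge only $\{\te_V\text{ odd}\}$. At $\cH^1$-a.e.\ point of this common boundary both $E$ and $E^*$ are half-plane-like, bounded by the same tangent line to $\Ga$; hence locally they occupy either the same half-plane, so $E\De E^*$ has density $0$, or opposite half-planes, so $E\De E^*$ has density $1$. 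In either case the point does not belong to $\cF(E\De E^*)$, and therefore $\cH^1(\cF(E\De E^*))=0$. By De Giorgi's theorem $P(E\De E^*)=\cH^1(\cF(E\De E^*))=0$, so the bounded set of finite perimeter $E\De E^*$ is Lebesgue-negligible. Thus $|E\De E^*|=0$, and the stated equality $E=E^*$ follows from the convention \eqref{ax}.
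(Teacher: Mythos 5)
Your overall plan is viable and structurally different from the paper's, but it has one genuine gap: the pivotal identification $\cH^1\big(\cF E^*\,\De\,\{x\,|\,\te_V(x)\mbox{ odd}\}\big)=0$ is asserted rather than proved, and the tool you propose for it is not sufficient on its own. The $\deg_2$ computation in the proof of Lemma \ref{lemV=V_E} evaluates the parity of $u(p)=\sum_i\ind_{\ga_i}(p)$ at points $p\notin\Ga$ by counting preimages of the maps $\ga_i/|\ga_i|$ at a regular value; it says nothing local about a point $x\in\Ga$. To convert it into your crossing rule you must compare $u$ at points on opposite sides of the approximate tangent line at $x$ and show that, mod $2$, a generic short transversal near $x$ meets $\Ga$ exactly $\te_V(x)$ times. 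This needs two ingredients: (i) at $\cH^1$-a.e.\ $x$ every $t\in\ga_i^{-1}(x)$ is a differentiability point with $\ga_i'(t)\neq0$ parallel to the approximate tangent line of $\Ga$ at $x$ (Rademacher plus the area formula), so each through-strand crosses a transversal an odd number of times; and, crucially, (ii) control of the part of $\Ga$ that accumulates at $x$ \emph{without} passing through it: strands with no preimage at $x$ can hit every transversal segment near $x$, and the $\deg_2$ formula alone does not dispatch them. One must invoke the a.e.\ density identity $\lim_{\ro\searrow0}\mu_V(B_\ro(x))/2\ro=\te_V(x)$ (rectifiability, or Theorem \ref{propmon}) to see that the stray strands have length $o(\ro)$ in $B_\ro(x)$, and then choose generic transversal offsets avoiding them. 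Without (ii) the ``each strand crossed once'' picture can fail on the very transversals you use; this is exactly the obstacle you flagged but did not close. (Also note that what is true a.e.\ is that the \emph{signed} jump of $u$ across $x$ and $\te_V(x)$ have the same parity, since both count preimages with versus without signs.)

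For comparison, the paper sidesteps this local analysis entirely. For each fixed $p\in\R^2\sm\Ga$ it selects a single generic ray $L$ from $p$: by the hypothesis $\cH^1(\cF E\,\De\,\{\te_V\mbox{ odd}\})=0$ and the same regular-value argument of Lemma \ref{lemV=V_E}, $L$ meets $\Ga$ finitely often, only at points of $\cF E$ with odd multiplicity (transversally to $\nu_E$) or at points of even multiplicity; hence the parity of $u(p)$ equals the parity of the number $M$ of $\cF E$-crossings, and boundedness of $E$ forces $M$ to be odd precisely when $p\in\mathring{E}$. This tests membership directly and avoids both the identification of $\cF E^*$ and your $P(E\,\De\,E^*)=0$ step; the latter, for the record, is sound once the boundary identification is in place, since $\pa^*(E\De E^*)\con\pa^*E\cup\pa^*E^*$, Federer's theorem, and the isoperimetric inequality do give $|E\De E^*|=0$ as you say. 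One last caveat: your closing sentence overreaches, because $E^*$ contains all of $\Ga$, including possible even-multiplicity pieces lying outside $\overline{\{u\mbox{ odd}\}}$, so $E^*$ need not be the representative singled out by \eqref{ax}; equality of sets can only be claimed off $\Ga$. This imprecision, however, is inherited from the statement itself: the paper's own proof likewise characterizes only the points of $\R^2\sm\Ga$.
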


\begin{proof}
	Fix $p\in\R^2\sm \Ga$. In the following we suppose without loss of generality that $p=0$. By hypotheses and by the calculations in the proof of Lemma \ref{lemV=V_E}, there exists a vector $v\in\R^2\sm\{0\}$ such that the ray $L=\{p+tv\,|\,t\in[0,\infty)\}$ verifies the properties:\\
	i) $L$ intersects $\Ga$ at points $y$ such that for any $i=1,...,N$ if $\ga_i(t)=y$ then $\ga_i$ is differentiable at $t$,\\
	ii) $L$ intersects $\cF E$ a finite number $M\in\N$ of times at points $z$ in $\cF E \cap \{x\,\,|\,\,\te_V(x) \mbox{ is odd}\}$ where $\nu_E(z),v$ are independent,\\
	iii) $L$ intersects $\Ga\sm\cF E$ a finite number of times at points $w$ in $ \{x\,\,|\,\,\te_V(x) \mbox{ is even}\}$ where $\ga_i'(t),v$ are independent whenever $\ga_i(t)=w$,\\
	iv) \beqs
			\begin{split}
				\bigg(\sum_{i=1}^N \ind_{\ga_i}(p)\bigg) \qquad \mbox{mod }2 &= \bigg(\sum_{i=1}^N \sum_{y\in L\cap (\ga_i)} \sharp\bigg(\frac{\ga_i}{|\ga_i|}\bigg)^{-1}\bigg(\frac{y}{|y|}\bigg)\qquad\mbox{mod }2\bigg)  \qquad\mbox{mod }2=\\
				&= \bigg(\sum_{i=1}^N \sum_{y\in L\cap (\ga_i) \cap \cF E} \sharp\bigg(\frac{\ga_i}{|\ga_i|}\bigg)^{-1}\bigg(\frac{y}{|y|}\bigg)\qquad\mbox{mod }2\bigg)  \qquad\mbox{mod }2,
			\end{split}
		 \eeqs
	where in iv) the second inequality follows from ii) and iii).\\
	By hypothesis we have that $\partial E \con \Gamma$, and then we can assume that $E=E^1$ is open. Now if $p\in E$, since $E$ is also bounded, the number $M$ has to be odd, and then $\big(\sum_{i=1}^N \ind_{\ga_i}(p)\big) \,\, \mbox{mod }2=1$. Conversely if $p$ is in the interior of $E^c$, then $M$ is even, and then $\big(\sum_{i=1}^N \ind_{\ga_i}(p)\big) \,\, \mbox{mod }2=0$.
\end{proof}

\begin{remark} \label{remindici2}
	We observe that Lemma \ref{lemindici2} applies to couples $E, V$ with $V\in \cA (E)$.\\
\end{remark}

\begin{lemma} \label{lemalternativa}
	Let $V=\bv(\Ga,\te_V)\in\cA(E)$ for some measurable set $E$. Letting $\Si:=\overline{\Ga\sm \pa E}$, it holds that if $\Si \neq \epty$ then for any $x\in \Si\cap \pa E$ at least one of the following holds:
	\beq \label{P}
	\begin{split}
		&i) \,\,\exists y\in \Si\cap \pa E,\,\,\exists\,f:[0,T]\to\R^2\,\,C^1\cap W^{2,p},\,\,T> 0, \\ & \quad f\mbox{ regular curve from }x \mbox{ to }y \mbox{ with } (f)\con \Ga,\\
		& ii) \,\, x \mbox{ is not isolated in }\Si\cap \pa E.
	\end{split}
	\eeq
	The alternative above is not exclusive.
\end{lemma}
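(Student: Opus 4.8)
The plan is to produce the stated dichotomy by \emph{following a single generating curve} out of $x$ and inspecting how its trace meets $\pa E$. Since $V\in\cA(E)$, we have $\Ga=\bcup_{i=1}^N(\ga_i)$ for finitely many regular $C^1\cap W^{2,p}$ immersions $\ga_i:\S^1\to\R^2$, and $\pa E\con\Ga$ with $\pa E$ closed. As $x\in\Si=\overline{\Ga\sm\pa E}$ while $x\in\pa E$ (so $x\notin\Ga\sm\pa E$), the point $x$ is a genuine accumulation point, giving $z_n\in\Ga\sm\pa E$ with $z_n\to x$ and $z_n\neq x$. Writing $z_n=\ga_{i_n}(s_n)$ and using the finiteness of the family together with the compactness of $\S^1$, I would pass to a subsequence with $i_n\equiv i$ fixed and $s_n\to s_*$, whence $\ga_i(s_*)=x$ by continuity; pigeonholing, $s_n$ approaches $s_*$ monotonically from one side, say $s_n\searrow s_*$ (the other side is symmetric, up to reversing the orientation of $\ga_i$). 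Shrinking $\de_0>0$ so that $\eta:=\ga_i|_{[s_*,s_*+\de_0]}$ is an injective regular $C^1$ arc with $\eta(s_n)=z_n\notin\pa E$, the entire analysis reduces to the one-dimensional set $P:=\eta^{-1}(\pa E)\con(s_*,s_*+\de_0)$, which is closed since $\pa E$ is closed; as $\eta$ maps into $\Ga$, its complement $Q$ is exactly $\eta^{-1}(\Ga\sm\pa E)$.

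The dichotomy is then whether $\pa E$ is avoided \emph{immediately} along $\eta$ or not. In the first case there is $\de\in(0,\de_0]$ with $\eta((s_*,s_*+\de))\con\Ga\sm\pa E$; following $\ga_i$ in the positive direction over one period and setting $t_1:=\inf\{t>s_*:\ga_i(t)\in\pa E\}$, the infimum is attained and satisfies $t_1\ge s_*+\de>s_*$ (nonempty because the loop returns to $\ga_i(s_*+2\pi)=x\in\pa E$). Then $f:=\ga_i(s_*+\,\cdot\,)$ on $[0,T]$, $T=t_1-s_*>0$, is a regular $C^1\cap W^{2,p}$ curve from $x$ to $y:=\ga_i(t_1)$ with $(f)\con\Ga$; since $\ga_i((s_*,t_1))\con\Ga\sm\pa E$ converges to $y$, one gets $y\in\overline{\Ga\sm\pa E}=\Si$ and $y\in\pa E$, which is alternative i) (with possibly $y=x$, when $\ga_i$ meets $\pa E$ only at $x$).

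In the remaining case $\eta((s_*,s_*+\de))\cap\pa E\neq\epty$ for every $\de$, so $P$ and the relatively open $Q$ both accumulate at $s_*$ from the right. Decomposing $Q$ into its connected components $(a_j,b_j)$, I would show each left endpoint satisfies $a_j>s_*$ (otherwise $(s_*,b_j)\con Q$ contradicts that $P$ accumulates at $s_*$), $\eta(a_j)\in\pa E$, and $\eta(a_j)\in\overline{\Ga\sm\pa E}=\Si$ because it is approached from the right by $\eta((a_j,b_j))\con\Ga\sm\pa E$. Choosing $q_k\in Q$ with $q_k\searrow s_*$ forces the enclosing left endpoints $a_{j_k}\searrow s_*$, so by injectivity of $\eta$ the points $\eta(a_{j_k})\in\Si\cap\pa E$ are distinct, different from $x$, and converge to $x$; this is alternative ii). I expect the only delicate point to be precisely this last bookkeeping: one must guarantee a genuinely \emph{infinite} family of distinct points of $\Si\cap\pa E$ clustering at $x$, which is why the count is run on the scalar preimage $\eta^{-1}(\pa E)$ rather than directly in $\R^2$, where transverse nearby arcs of $\Ga$ could otherwise spoil it. Finally, nothing in the two constructions prevents both from occurring simultaneously, so the alternative is not exclusive.
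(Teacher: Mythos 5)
Your proof is correct and takes essentially the same route as the paper's: there the argument is run in contrapositive form, assuming $x$ is isolated in $\Si\cap\pa E$, following a generating curve through $x$ into $S=\Ga\sm\pa E$, and taking the first return time to $\pa S=\Si\cap\pa E$ to produce the arc $f$ --- precisely your Case 1 hitting-time construction. Your Case 2 bookkeeping on the scalar preimage $\eta^{-1}(\pa E)$ (left endpoints of components of the complement accumulating at $s_*$, mapped injectively to distinct points of $\Si\cap\pa E$ converging to $x$) is a careful expansion of the step the paper dispatches with the one-line remark that otherwise $x$ would not be isolated in $\pa S$, so you have, if anything, supplied more detail than the original, including the selection of the correct curve along which points of $S$ accumulate.
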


\begin{proof}
	Write $V=\sum_{i=1}^N (\si_i)_\sharp(\bv(S^1,1))$. Assume $\Si\neq \epty$, that is equivalent to $\Ga\sm\pa E=:S\neq \epty$. Suppose $x\in \Si\cap \pa E$ is isolated in $\Si\cap \pa E$, then we want to prove that condition $i)$ in \eqref{P} holds true. There exists $r_0>0$ such that $B_r(x)\cap \Si\cap \pa E = \{x\}$ for any $r\le r_0$. Up to reparametrization we can say that $\si_1|_{(-\ep,\ep)}:(-\ep,\ep)\to B_{r_0}(x)$ passes through $x$ at time $0$. Up to reparametrize $\si_1(t)$ into $\si_1(-t)$, we can say that there exists a time $T>0$ such that $\si_1|_{(0,T)}\con S$ and $y:=\si_1(T)\in \pa S=\Si\cap \pa E$, looking at $S$ as topological subspace of $\Si$; in fact otherwise $x$ would not be isolated in $\pa S= \Si\cap \pa E$. Defining $f(t)=\si_1(t)$ for $t\in[0,T]$ gives alternative $i)$ in \eqref{P}.\\
\end{proof}

\subsection{Necessary conditions} \label{neccond}
Here we prove that a set $E\con\R^2$ with $\rfp(E)<+\infty$ has the necessary properties that inspire formula \eqref{main}. \\

\noindent Let $E_n$ be any sequence of sets such that $\fp(E_n)\le C$ and $\chi_{E_n}\to \chi_E$ in $L^1(\R^2)$. Let us adopt the notation $V_{E_n}=\sum_{i\in I_n } (\ga_{i,n})_\sharp (\bv(S^1,1))=\bv(\Ga_n,\te_{V_{E_n}})$, so that $\fp(E_n)= \sum_{i\in I_n } \cH^1(\ga_{i,n}) + \cE_p(\ga_{i,n})$. Using also \eqref{ineq1} we have that $0<c\le\cH^1(\ga_{i,n})\le C<\infty$ for any $i,n$. Also $\cE_p(\ga_{i,n})\ge c >0$ for any $i,n$, thus $\sharp I_n<+\infty$ for large $n$ and then we can suppose that $I_n=I$ for any $n$. Also we can choose $E_n$ bounded and by $L^1$ convergence we have that $|E|<+\infty$.\\

\noindent Moreover we observe that in order to calculate the relaxation of $\fp$ we can suppose that the sequence $E_n$ is actually uniformly bounded, hence getting that $E$ is bounded.\\
Indeed if (up to subsequence) we have that for example $\ga_{1,n}\cap B_n(0)^c\neq \epty$, then by boundedness of the length we have $\ga_{1,n}\con (B_{n-c}(0))^c$ for any $n$ for some $c$. Let $\La_n$ be the connected component of $\cup_{i\in I} (\ga_i)$ containing $(\ga_1)$. The component $\La_n$ is equal to some union $\cup_{j\in J_n} (\ga_{j,n})$. Up to relabeling we can suppose that $J_n=J$ for any $n$. Since the length of each curve is uniformly bounded, then there exist open sets $U_n$ such that $\La_n\con U_n$, $U_n\cap \big( \cup_{i\in I\sm J} (\ga_{i,n}) \big)=\epty$, and $U_n\cap B_{R_n}(0)=\epty$ for some sequence $R_n\to \infty$. Therefore the set $E_n':=E_n\sm U_n$ still converges to $E$ in $L^1(\R^2)$, and $\fp(E_n')<\fp(E_n)$.\\ 

\noindent Under the above notation we have the following result.

\begin{lemma} \label{lemmanecess}
	Suppose $E\con\R^2$ verifies that $\rfp(E)<+\infty$. Let $E_n\con\R^2$ be uniformly bounded such that $\chi_{E_n}\to\chi_E$ in $L^1(\R^2)$ with $\fp(E_n)\le C$. Suppose that for any $n$ the set $\{p\,|\,\te_{V_{E_n}}(p)>1\}$ is finite, then any subsequence of $V_{E_n}$ converging in the sense of varifolds converge to an element of $\cA(E)$.
\end{lemma}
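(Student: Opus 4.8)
The plan is to show that the limit of any varifold-convergent subsequence of $V_{E_n}$ satisfies every requirement in the definition of $\cA(E)$, and I would organize the argument as compactness of the curves, identification of the limit plus the measure-theoretic inclusions, and finally the parity relation, which is the hard part. First I would produce a good limit for the parametrizing curves: reparametrizing each $\ga_{i,n}$ by constant speed, the bounds $c\le\cH^1(\ga_{i,n})\le C$ (from the preamble and \eqref{ineq1}) together with $\cE_p(\ga_{i,n})\le C$ give a uniform $W^{2,p}(S^1;\R^2)$ bound and a uniform positive lower bound on the speeds; by the compact embedding $W^{2,p}\hookrightarrow C^{1,\al}$ I pass to a further subsequence along which $\ga_{i,n}\to\ga_i$ in $C^1$ and weakly in $W^{2,p}$, for the finitely many $i\in I$. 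The speed lower bound shows each $\ga_i$ is a regular $C^1\cap W^{2,p}$ immersion, and weak lower semicontinuity of $\cE_p$ with continuity of length under $C^1$ convergence gives $\sum_{i\in I}\fp(\ga_i)<+\infty$.

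Since $\ga_{i,n}\to\ga_i$ in $C^1$ the image varifolds converge, so $V_{E_n}\to\sum_{i\in I}(\ga_i)_\sharp(\bv(S^1,1))=:V=\bv(\Ga,\te_V)$ with $\Ga=\cup_{i\in I}(\ga_i)$; by uniqueness of limits this is the given varifold limit, so $V$ already has the representation, finite-energy and finitely-many-curves structure demanded by $\cA(E)$ (and $\te_V$ is bounded, being controlled through the density bound of Theorem \ref{propmon}). To obtain $\pa E\con\Ga$ and $V_E\le V$, I use that $\chi_{E_n}\to\chi_E$ in $L^1$ forces $D\chi_{E_n}\wto D\chi_E$; writing $|D\chi_{E_n}|=\cH^1\res\cF E_n\le\mu_{V_{E_n}}$ and passing to weak-$\star$ limits of these bounded measures, lower semicontinuity of the total variation yields $|D\chi_E|\le\mu_V$. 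Hence $\cF E\con\supp|D\chi_E|\con\supp\mu_V=\Ga$, so by \eqref{ref2} $\pa E=\overline{\cF E}\con\Ga$; and since $\cF E\con\Ga$ are both $1$-rectifiable their approximate tangent lines agree $\cH^1$-a.e.\ on $\cF E$, where $\te_{V_E}=1\le\te_V$, giving $V_E\le V$ as varifolds.

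The remaining point is the parity relation $\cF E\con\{\te_V \text{ odd}\}$ and $\cH^1(\{\te_V \text{ odd}\}\sm\cF E)=0$, which I would obtain through winding numbers. For each $n$ the hypothesis that $\{p:\te_{V_{E_n}}(p)>1\}$ is finite makes it $\cH^1$-negligible, so Lemma \ref{lemV=V_E} (equivalently Lemma \ref{lemindici2}) gives, up to a null set, $E_n=\{p:\sum_{i\in I}\ind_{\ga_{i,n}}(p) \text{ odd}\}\cup\Ga_n$. For $p\notin\Ga$ the winding numbers are locally constant and stable under the $C^0$ convergence of the curves, so $\ind_{\ga_{i,n}}(p)\to\ind_{\ga_i}(p)$; combined with $\chi_{E_n}\to\chi_E$ a.e.\ this yields, up to a null set, $E=F:=\{p:\sum_{i\in I}\ind_{\ga_i}(p) \text{ odd}\}$. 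As densities depend only on the $L^1$-class, the density of $E$ equals that of $F$. I then read the parity at $\cH^1$-a.e.\ $x\in\Ga$ from the local structure, where the tangent line exists and the $\te_V(x)$ sheets are $C^1$ graphs over it: crossing $x$ transversally flips the winding parity exactly $\te_V(x)$ times mod $2$, so the density of $F=E$ at $x$ is $\frac12$ precisely when $\te_V(x)$ is odd and is $0$ or $1$ otherwise. Since membership in $\cF E$ is equivalent to the half-plane blow-up of density $\frac12$, this gives $\cH^1(\cF E\,\De\,\{\te_V \text{ odd}\})=0$, hence the second condition and the $\cH^1$-a.e.\ part of the first.

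The genuine obstacle is upgrading $\cF E\con\{\te_V \text{ odd}\}$ to a pointwise inclusion, since the sheets of $\Ga$ through a point of $\cF E$ need not be in general position and some may be mutually tangent, so a purely a.e.\ argument is insufficient. I expect to handle this with the flux property (Definition \ref{defflux}), which holds for $V$ because it is a finite sum of push-forwards of $\bv(S^1,1)$ by $C^1\cap W^{2,p}$ immersions with bounded multiplicity: at any $x\in\cF E$ a nice rectangle in the tangent direction exists and the total crossing multiplicity $M\le\te_V(x)$ is constant across it, and the parity of $M$ governs the jump of $\chi_E=\chi_F$ between the two sides of $\Ga$ near $x$. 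As $x\in\cF E$ forces a genuine density jump, $M$, hence $\te_V(x)$, must be odd. This pointwise consistency between the multiplicity parity and the essential boundary is precisely where the flux property is essential and is the delicate step; the rest of the proof is compactness and lower semicontinuity.
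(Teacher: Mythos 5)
Your first two blocks are sound. The compactness step (constant-speed reparametrization, uniform $W^{2,p}$ bounds, $C^1$ convergence plus weak $W^{2,p}$ convergence, lower semicontinuity of $\cE_p$) is exactly the paper's opening, and your derivation of $\pa E\con\Ga$ and $V_E\le V$ from $|D\chi_{E_n}|=\mu_{V_{E_n}}\wsc\mu_V$, $D\chi_{E_n}\wsc D\chi_E$, hence $|D\chi_E|\le \mu_V$, together with \eqref{ref2} and locality of approximate tangents, is correct and in fact more explicit than the paper, which reaches $\pa E\con\Ga$ by a topological approximation of boundary points. Your route to the \emph{almost everywhere} parity statements is also genuinely different from the paper's and is viable: where the paper proves $\cH^1(\{\te_V \mbox{ odd}\}\sm\cF E)=0$ through a current-theoretic detour ($\pa[|E_n|]\to\pa[|E|]$, countable decomposition into component boundaries $\al_{i,n}$, uniform Lipschitz bounds, Go\l ab's theorem, and a parity comparison with the boundary multiplicity $m$ in \eqref{ref5}), you obtain the identity $E=F=\{p:\sum_i\ind_{\ga_i}(p)\mbox{ odd}\}$ up to null sets directly from stability of winding numbers off $\Ga$ (legitimate: Lemma \ref{lemindici2} applies to each $E_n$ because $\{\te_{V_{E_n}}>1\}$ is finite), and then read parities from blow-up densities at $\cH^1$-a.e.\ point of $\Ga$, using countability of transversal crossings (from the proof of Lemma \ref{lem2}) and Federer's theorem $\cH^1(\pa^*E\sm\cF E)=0$. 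This avoids currents entirely and is leaner.

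The genuine gap is where you yourself locate it: the \emph{pointwise} inclusion $\cF E\con\{x\,|\,\te_V(x)\mbox{ odd}\}$, demanded verbatim by the definition of $\cA(E)$, is asserted rather than proved, and the flux-property sketch has two concrete holes. First, from $M\le\te_V(x)$ alone, ``$M$ odd, hence $\te_V(x)$ odd'' is a non sequitur: to equate parities you would have to shrink the nice rectangle until the slice $\{y:\lgl y-x,v\rgl=0\}$ meets the arcs only at $x$, which forces $M=\te_V(x)$ — but this works only when \emph{all} sheets through $x$ are tangent to the single direction $v$. Second, and more seriously, the nice-rectangle/flux machinery of Definition \ref{defflux} only sees the arcs tangent to one direction, while points of $\cF E$ at which sheets of $\Ga$ cross transversally are perfectly possible (a ghost arc crossing the smooth boundary transversally; such points are countable but not absent, and the inclusion must hold there too). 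At such a point the constant $M$ misses part of $\te_V(x)$, so its parity says nothing about $\te_V(x)$. Even the phrase ``the parity of $M$ governs the jump of $\chi_E$'' conceals a needed argument that the two long sides of the rectangle lie in the density-$0$ and density-$1$ regions of $E$, i.e.\ that the rectangle is aligned with $\nu_E(x)$ via the blow-up at $x\in\cF E$.

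For comparison, the paper closes this step without the flux property: at each $p\in\cF E$ it uses the blow-up to choose a segment from an interior point $q$ through $p$ to an exterior point $b$ that is transversal to \emph{all} sheets simultaneously (condition \eqref{ref20}); by $C^{1,\al}$ convergence this segment crosses the approximating curves $\ga_{i,n}$ exactly $\te_V(p)$ times transversally, and the exact index representation of $E_n$ (Lemma \ref{lemV=V_E}), after connecting $b$ to infinity inside the unbounded component of $E_n^c$, yields $1\equiv\te_V(p)+0 \ (\mathrm{mod}\ 2)$. Within your framework the cleanest repair is to keep your identity $E=F$ up to null sets and run the same transversal-segment parity count directly on the limit curves: at $x\in\cF E$ pick the segment direction transversal to the finitely many tangent directions of the sheets at $x$ and close to $\nu_E(x)$, note each sheet crosses it exactly once near $x$, so the winding parity flips by $\te_V(x)$ mod $2$ across $x$, while the blow-up of $E$ at $x\in\cF E$ forces the two endpoint components of $B_r(x)\sm\Ga$ to have $\chi_F$-values $1$ and $0$. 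The flux property is then neither needed nor, as stated, sufficient; as written, your last step does not establish the membership $V\in\cA(E)$.
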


\begin{proof}
\noindent The arclength parametrizations $\si_{i,n}$ corresponding to $\ga_{i,n}$ are uniformly bounded in $W^{2,p}$ for any $i\in I_n=I$ and for any $n$. Therefore, since the sequence is uniformly bounded in $\R^2$, up to subsequence $\si_{i,n}\to \si_i$ strongly in $C^{1,\al}$ for some $\al\le\frac{1}{p'}$ and weakly in $W^{2,p}(\R^2)$ for any $i\in I$. Each $\si_i$ is then a closed curve parametrized by arclength, and we call $\ga_i$ the parametrization on $S^1$ with constant velocity.\\
Hence the varifolds $V_{E_n}$ converge to some limit integer rectifiable varifold $V=\bv(\Ga,\te_V)$ in the sense of varifolds, and $V=\sum_{i\in I} (\ga_i)_\sharp(\bv(S^1,1))$. The multiplicity function $\te_V$ is upper semicontinuous and pointwise bounded by the discussion in Subsection \ref{sscmon}. Also the sets $E_n$ converge to $E$ weakly* in $BV(\R^2)$, that is $\chi_{E_n}\to\chi_E$ and $D\chi_{E_n}\wsc D\chi_E$, thus $E$ is a set of finite perimeter. Observe that $|D\chi_{E_n}|=\mu_{V_{E_n}}\wsc \mu_V$.\\

\noindent From now on we call $\Ga=\cup_{i\in I} (\si_i)$, $\Si=\overline{\Ga\sm\pa E}$, $S=\Ga\sm \pa E$.\\

\noindent Let $x\in \pa E$, so that for any $\ro>0$ we have
\beq
\lim_{n} \int_{B_\ro(x)}\chi_{E_n} >0, \qquad \lim_{n} \int_{B_\ro(x)}\chi_{E_n^c} >0.
\eeq
Then for $\ro>0$ there is $n(\ro)$ such that there exist $\xi_n\in E_n\cap B_\ro(x), \,\eta_n\in E_n^c\cap B_\ro(x) $ for any $n\ge n(\ro)$ and thus there exists $w_n\in \pa E_n \cap B_\ro(x) $ for any $n\ge n(\ro)$. Taking some sequence $\ro_k\searrow0$, we find a sequence $w_n$ converging to $x$. Therefore, also by density \eqref{ref2}, we have proved that $\cF E\con \pa E\con \big\{ y\,|\, y=\lim_n y_n,\,y_n\in\cF E_n \big\}=\Ga$. In particular $\pa E$ is $1$-rectifiable.\\

\noindent Now we prove that $\cF E\con \{x\,\,|\,\,\te_V(x) \mbox{ is odd}\}$.\\
So let $p\in\cF E$, and let $\{\ga^j_k\,|\, j=1,...,N, i=1,...,n_j \}$ be distinct curves which are suitable disjoint restrictions of the $\ga_i$'s such that $(\ga^j_k)\con(\ga_j)$ for any $k$ (up to relabeling the $\ga_i$'s) and
\beqs
\Ga\cap B_{r_0}(p)=\bcup_{j,k} (\ga^j_k).
\eeqs
Without loss of generality we write $\ga^j_k(t^j_k)=p$. We want to prove that $\sum_{j=1}^N n_j=\te_V(p)$ is odd.
Since $p\in\cF E$ there is $q\in{E}\cap B_{r_0}(p)$ such that the segment
\beqs
s(t)= q+\frac{p-q}{|p-q|}t \qquad t\in [0,2|p-q|]
\eeqs
is such that
\beq \label{ref20}
\bigg|\bigg\lgl \frac{p-q}{|p-q|}, (\ga^j_k)'(t^j_k) \bigg\rgl\bigg|>0,
\eeq
and $s|_{[0,|p-q|)}\con E$, $s|_{[|p-q|,2|p-q|]}\con E^c$. Also since $\ga_{i,n}\to\ga_i$ strongly in $C^{1,\al}$, by \eqref{ref20} we get that $s$ intersects transversely $\ga_{i,n}$ for any $i$ for $n$ big enough, and the number of such intersections is $\te_V(p)$. Also denote $b:=s(2|p-q|)$. Moreover we can write that $B_{r_q}(q)\con{E}_n$ and $B_{r_b}(b)\con E_n^c$ for $n$ sufficiently big.\\
We know that for any $\ep>0$ there is $a_\ep\in E_n^{c*}$, where $(\cdot)^*$ will always denote the unbounded connected component of $(\cdot)$, such that
\beqs
\bigg|\frac{p-q}{|p-q|}-\frac{a_\ep-b}{|a_\ep-b|}\bigg|<\ep,
\eeqs
\beqs
\sum_{i\in I}\ind_{\ga_{i,n}}(b) \quad\mbox{mod }2 = \sum_{i\in I}\sharp\bigg(\frac{\ga_{i,n}}{|\ga_{i,n}|}\bigg)^{-1}\bigg(\frac{a_\ep-b}{|a_\ep-b|}\bigg)\quad\mbox{mod }2.
\eeqs
Hence up to a small $C^\infty$ deformation which is different from the identity only on $\big\{ x + t\frac{p-q}{|p-q|} |\,\,x\in B_{r_b}(b) , t\in \R_{\ge0}\big\}\sm B_{r_b}(b)$ we can suppose that for $M>0$ sufficiently big it holds that
\beqs
a_0:=b+M \frac{p-q}{|p-q|}\in E_n^{c*},
\eeqs
\beqs
\bigg\{  b+\R_{\ge0}\bigg(\frac{p-q}{|p-q|} \bigg) \bigg\}\cap \bigg( \bcup_{i\in I} (\ga_{i,n}) \bigg)\con \cF E_n,
\eeqs
\beq \label{ref21}
\sum_{i\in I}\ind_{\ga_{i,n}}(b) \quad\mbox{mod }2 =  \sum_{i\in I}\sharp\bigg(\frac{\ga_{i,n}}{|\ga_{i,n}|}\bigg)^{-1}\bigg(\frac{a_0-b}{|a_0-b|}\bigg)\quad\mbox{mod }2.
\eeq
Taking into account Lemma \ref{lemV=V_E}, by construction we have that the quantity in \eqref{ref21} is $0 \mbox{ mod }2$. Moreover we have that
\beqs
1\quad\mbox{mod }2=\sum_{i\in I}\ind_{\ga_{i,n}}(q) \quad\mbox{mod }2 =\bigg(\te_V(p) + \sum_{i\in I} \ind_{\ga_{i,n}}(b)\bigg)\quad\mbox{mod }2,
\eeqs
and then $\te_V(p)$ is odd.\\

\noindent It remains to prove that $\cH^1(\{x\,\,|\,\,\te_V(x) \mbox{ odd}\}\sm\cF E)=0$.\\
We observe that in the sense of currents we have the convergence $[|E_n|]\to [|E|]$ and thus
\beqs
	\pa [|E_n|] = \boldsymbol{\tau}\bigg(\bcup_{i\in I} (\si_{i,n}), 1, \xi_0 \bigg)  \to \pa [|E|] 
\eeqs
in the sense of currents where $\xi_0$ is the positive orientation of the boundaries with respect to $\R^2$. We can write $\pa [|E_n|]=\sum_{i=0}^\infty (\al_{i,n})_\sharp([|S^1|])$ for countably many Lipschitz parametrizations $\al_{i,n}$ ordered so that $L(\al_{i+1,n})\le L(\al_{i,n})$ for any $i,n$. Such immersions positively orient the boundary $\pa E^i_n$ of $E^i_n$, where $E^i_n$ is one of the open connected components of ${E}_n$, which are at most countable. The length of each $\al_{i,n}$ is uniformly bounded, then we can assume that the parametrizations $\al_{i,n}$ are $L$-Lipschitz with constant $L$ independent of $i,n$. Since the parametrizations $\si_{i,n}$ converge strongly in $C^1$, the immersions $\al_{i,n}$ uniformly converge to $L$-Lipschitz curves $\al_i:S^1\to \R^2$ as $n\to\infty$. We can also reparametrize each $\al_i$ by constant velocity almost everywhere (in the sense of metric derivatives). In the sense of currents we have that
\beqs
	\sum_{i=0}^\infty (\al_{i,n})_\sharp([|S^1|])=\pa[|E_n|] \to \pa[|E|]=  \boldsymbol{\tau}\big(\cF E, 1, \xi_0 \big).
\eeqs
Let us define
\beqs
	T:=\sum_{i=0}^\infty  (\al_i)_\sharp([|S^1|]).
\eeqs
Since each $(\al_{i,n})$ is contained in some $(\si_{{i_0},n})$ we have that $d_\cH (\alpha_{i,n},\alpha_i) \le N \max_{i=1,...,N} \|\si_{i,n} - \si_i\|_{\infty} \le \ep$ for any $n\ge n_\ep$.
Since an equivalent definition of Hausdorff distance is $d_\cH(A,B)=\inf\big\{\ep>0\,\,|\,\,A\con\cN_\ep(B),\,B\con\cN_\ep(A)\big\}$ where $\cN_\ep(X)=\{x\,\,|\,\,d(x,X)\le\ep\}$, we have that
\beq\label{eq1}
	\forall\ep>0\,\exists n_\ep\,:\qquad d_\cH\big(\cup_i(\al_{i,n}),\cup_i(\al_i)\big)<\ep \qquad n\ge n_\ep.
\eeq
Thus $\cup_i(\al_{i,n})$ converges in Hausdorff distance to the set $\overline{\cup_i(\al_i)}$. Moreover, writing $\cup_i(\al_{i,n})=\sqcup_1^{k_n} C^j_n$ as a disjoint union of finitely many compact connected components, by a diagonal argument, applying Go\l ab Theorem on each component, we can assume $k_n=k$ for any $n$ and that $C^j_n$ converges in Hausdorff distance to a compact connected set $C^j$ for any $j=1,...,n$. Therefore $\overline{\cup_i(\al_i)}=\cup_1^k C^j = \Ga$, and then $\cH^1(\overline{\cup_i(\al_i)})=\cH^1(\Ga)$ is finite and $\overline{\cup_i(\al_i)}$ is closed and $1$-rectifiable.\\


\noindent Let $x\in \R^2\sm\Ga$. By \eqref{eq1} we have that there is $\ro>0$ such that $B_\ro(x)\cap \big(\overline{\cup_i(\al_i)}\cup\cup_i(\al_{i,n})\big) =\epty$ for any $n$ large. Then there exists $n_x$ such that for any $i$ the index $\ind_{\al_{i,n}}(x)$ is the same for any $n\ge n_x$.\\
In fact suppose by contradiction for any $n$ there is $i_n,N_1,N_2\ge n$ with $1=\ind_{\al_{{i_n},N_1}}(x)\neq\ind_{\al_{{i_n},N_2}}(x)=0$ and $i_n\to\infty$ without loss of generality. Then $L(\al_{{i_n},N_1})\ge C(\ro)$ for a constant $C(\ro)>0$ depending only on $\ro$ by isoperimetric inequality. Since $L(\al_{i+1,n})\le L(\al_{i,n})$ for any $i,n$ and $i_n\to\infty$, this implies $P(E_n)$ is arbitrarily big that for $n$ large enough.\\

\noindent Now let $x\in \R^2\sm\Ga$ such that there exists $\lim_n\chi_{E_n}(x)$. Since $\chi_{E_n}(x)=\sum_i \ind_{\al_{i,n}}(x)$ for $n$ big such that $B_\ro(x)\cap \big(\overline{\cup_i(\al_i)}\cup\cup_i(\al_{i,n})\big) =\epty$ for some $\ro>0$, from the above discussion we have that
\beqs
\begin{split}
	\lim_n \sum_i \ind_{\al_{i,n}}(x)=1 \quad &\Leftrightarrow \quad \forall n\ge n_0 \,\exists i\,: \quad \ind_{\al_{i,n}}(x)=1 \\
	&\Leftrightarrow \quad \exists i(x)\, \forall n\ge n_0 \quad \ind_{\al_{i(x),n}}(x)=1\\
	&\Leftrightarrow \quad \exists i(x)\,:\quad\ind_{\al_{i(x)}}(x)=1.
\end{split}
\eeqs
Hence
\beqs
\begin{split}
	x\in E  \quad \Leftrightarrow \quad \lim_n \sum_i \ind_{\al_{i,n}}(x)=1
	\quad \Leftrightarrow \quad  \exists i(x)\,:\quad\ind_{\al_{i(x)}}(x)=1 
	\quad \Leftrightarrow \quad \sum_i \ind_{\al_i}(x)=1.
\end{split}
\eeqs
In particular
\beq\label{eq3}
	E=\bigg\{x\in\R^2\sm\Ga\,\,|\,\, \sum_{i=0}^\infty \ind_{\al_i}(x)=1\bigg\}= \bigg\{x\in\R^2\sm\Ga\,\,:\,\, \bigg|\sum_{i\in I} \ind_{\si_i}(x)\bigg| \mbox{ is odd}\bigg\},
\eeq
up to $\cL^2$-negligible sets, where the second equality follows by the uniform convergence of the finitely many curves $\si_{i,n}$. Also for any $i\neq j$ it holds that $|\{x\in\R^2\sm(\al_i)\,\,|\,\,\ind_{\al_i}(x)=1\}\cap \{x\in\R^2\sm(\al_j)\,\,|\,\,\ind_{\al_j}(x)=1\}|=0$, because the equality holds for any $n$ for $\al_{i,n},\al_{j,n}$. Therefore we have
\beqs
\begin{split}
	\sum_i \int_{(\al_{i,n})} \lgl \om, \tau_{i,n}\rgl &= \int_{E_n} d\om \xrightarrow[n]{} \int_{\big\{\sum_{i=0}^\infty \ind_{\al_i}(x)=1\big\}} d\om=\\& = \sum_i \int_{\big\{\ind_{\al_i}(x)=1\big\}} d\om = \sum_i \int_{(\al_i)} \lgl \om,\tau_i\rgl,
\end{split}
\eeqs
for any $1$-form $\om$ on $\R^2$. This means that
\beq
	\sum_{i=0}^\infty (\al_{i,n})_\sharp([|S^1|])=\pa[|E_n|]\to T=\sum_{i=0}^\infty  (\al_i)_\sharp([|S^1|])=\pa[|E|]=  \boldsymbol{\tau}\big(\cF E, 1, \xi_0 \big),
\eeq
in the sense of currents. In particular we can write the multiplicity function of the current $\pa[|E|]$ as
\beq \label{ref5}
	m(x)=\sum_{i=0}^\infty \sum_{y\in \al_i^{-1}(x)} S(y),
\eeq
for $\cH^1$-ae $x\in\R^2$, where $S(y)=+1$ if $d (\al_i)_{y}$ preserves the orientation and $S(y)=-1$ in the opposite case. Note that since $\te_V$ is bounded, $\Ga=\overline{\cup_i(\al_i)}$, and $\te_V(p)\ge\sum_i\sharp\al_i^{-1}(p)$, then the series in \eqref{ref5} is actually a finite sum.\\
Also observe that since $E$ is a set of finite perimeter, by Gauss-Green formula the multiplicity function $m$ is equal to $1$ $\cH^1$-ae on $\cF E$, $\cH^1(\{x\,|\,m(x)\ge 1\}\sm\cF E)=0$, and $m=0$ $\cH^1$-ae on $\R^2\sm \cF E$. Also, $m(x)=0$ at $\cH^1$-ae $x \in \Ga \sm \cup_i (\al_i)$.\\

\noindent Now since $\Ga=\overline{\cup_i(\al_i)}$ and $\cH^1(\al_i(\{t\,:\,\not\exists \al_i'(t)\}))=0$, then
\beq\label{eq4}
	\cH^1\big(  \{p\in\Ga\,\,|\,\,\exists\,t,i\,: \al_i(t)=p, \not\exists \al_i'(t) \} \big)=0.
\eeq
So let $p\in\cup_i(\alpha_i)$ be such that if $\al_i(t)=p$ then $\exists\al_i'(t)$. We want to check that $\te_V(p)$ and $\sum_i\sharp\al_i^{-1}(p)$ have the same parity. In fact if without loss of generality $\te_V(p)>\sum_i\sharp\al_i^{-1}(p)$, taking into account \eqref{eq3}, following a segment $s$ intersecting $\cup_i(\alpha_i)$ only at $p$ and transversally (as in the first part of the proof) we have that:\\
i) $s$ passes from $E$ to $E^c$ if and only if $\te_V(p)$ is odd, or equivalently if and only if $\sum_i\sharp\al_i^{-1}(p)$ is odd;\\
ii) $s$ passes from $E$ to $E$ if and only if $\te_V(p)$ is even, or equivalently if and only if $\sum_i\sharp\al_i^{-1}(p)$ is even.\\
Hence by \eqref{ref5} we conclude that $\te(p)$ is odd if and only if alternative i) above holds, if and only if the summands in \eqref{ref5} are odd, if and only if $m(p)$ is odd.\\
By \eqref{eq4} this holds for $\cH^1$-ae point in $\cup_i(\alpha_i)$. Therefore $\cH^1(\{x\,|\,m(x) \mbox{ is odd}\}\De \{x\,|\,\te_V(x) \mbox{ is odd}\})=0$. So finally since $\cH^1 (\{x\,|\,m(x)=1\}\sm \cF E  )=0$, then
\beqs
\begin{split}
		0&=\cH^1(\{x\,|\,m(x) \mbox{ odd}\}\sm \{x\,|\,m(x)=1\}) =\cH^1(\{x\,|\,\te_V(x) \mbox{ odd}\}\sm \{x\,|\,m(x)=1\}) =\\ &=\cH^1( \{ x \,|\, \te_V(x) \mbox{ is odd} \} \sm \cF E ),
\end{split}
\eeqs
which completes the proof.\\
\end{proof}

\subsection{Proof of Theorem \ref{thmmain}} \label{subsproofmain}
First we want to prove the following approximation result.

\begin{prop} \label{propapprox}
	Let $E\con\R^2$ be measurable and bounded with $\cA(E)\neq\epty$. Then for any $V\in\cA(E)$ there exists a sequence $E_n$ of uniformly bounded sets such that
	\beq
		\fp(E_n)<+\infty, \quad \chi_{E_n}\to\chi_E \quad\mbox{in }L^1(\R^2), \quad  V_{E_n}\to V \, \mbox{as varifolds}, \quad \lim_n \fp(E_n)=\fp(V).
	\eeq
	Moreover for any $n$ we have that $V_{E_n}=\sum_{i=1}^{N} (\ga_i)_\sharp(\bv(S^1,1))=\bv(\Ga_n,\te_{V_{E_n}})$ and $\{p\,|\,\te_{V_{E_n}}(p)>1\}$ is finite.
\end{prop}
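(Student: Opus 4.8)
The plan is to represent $V$ by finitely many regular immersions, approximate them by smooth immersions converging in $W^{2,p}$, and then perturb the approximants to make all their intersections transversal, so that the associated sets have finite perimeter of the required form. First I would fix a representation $V=\sum_{i=1}^N(\ga_i)_\sharp(\bv(S^1,1))$ with $\ga_i\in C^1\cap W^{2,p}(S^1;\R^2)$ regular and $N=\sharp I<+\infty$, and record, via Remark \ref{remindici2} and Lemma \ref{lemindici2}, that $E$ is the parity set $E=\{p\in\R^2\sm\Ga\,:\,|\sum_{i}\ind_{\ga_i}(p)|\text{ is odd}\}\cup\Ga$, where $\Ga=\cup_i(\ga_i)$.

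Next I would mollify each $\ga_i$ to obtain smooth immersions $\be_i^{(n)}\to\ga_i$ in $W^{2,p}(S^1)$; since $p>1$ gives $W^{2,p}(S^1)\hookrightarrow C^{1}(S^1)$, this convergence is also in $C^1$, and regularity of $\ga_i$ guarantees that the $\be_i^{(n)}$ are immersions for $n$ large. The crucial step is then a transversalization: I would replace $\be_i^{(n)}$ by a $C^\infty$-small (hence, on $S^1$, $W^{2,p}$-small) perturbation $\ga_i^{(n)}$ chosen so that the whole family $\{\ga_i^{(n)}\}_{i=1}^N$ has only transversal self- and mutual intersections. By the multijet transversality theorem such perturbations are generic, and transversality of one-dimensional curves in $\R^2$ forces the intersection set to be discrete, hence finite by compactness of $S^1$. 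Writing $V^{(n)}:=\sum_i(\ga_i^{(n)})_\sharp(\bv(S^1,1))=\bv(\Ga_n,\te_{V^{(n)}})$ with $\Ga_n=\cup_i(\ga_i^{(n)})$, this yields that $\{p\,|\,\te_{V^{(n)}}(p)>1\}$ is finite (the ``Moreover'' claim) and in particular $\cH^1(\{\te_{V^{(n)}}>1\})=0$. A diagonal choice keeps $\ga_i^{(n)}\to\ga_i$ in $W^{2,p}$ and $C^1$.

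Because $\cH^1(\{\te_{V^{(n)}}>1\})=0$, Lemma \ref{lemV=V_E} applies to $V^{(n)}$ and exhibits it as $V_{E_n}$, where $E_n=\{p\in\R^2\sm\Ga_n\,:\,|\sum_i\ind_{\ga_i^{(n)}}(p)|\text{ is odd}\}\cup\Ga_n$; since the $\ga_i^{(n)}$ are finitely many $C^\infty$ immersions, definition \eqref{ref6} and Lemma \ref{lem2} give $\fp(E_n)=\fp(V^{(n)})=\sum_i\fp(\ga_i^{(n)})<+\infty$. I would then verify the four convergences. The uniform length bound $\sum_iL(\ga_i^{(n)})\le C$ together with $C^1$ convergence of finitely many curves confines all $\Ga_n$, and hence all $E_n$, to a fixed ball, giving uniform boundedness. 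The $C^1$ convergence $\ga_i^{(n)}\to\ga_i$ gives $V_{E_n}=V^{(n)}\to V$ as varifolds, while the $W^{2,p}$ convergence gives $L(\ga_i^{(n)})\to L(\ga_i)$ and $\cE_p(\ga_i^{(n)})\to\cE_p(\ga_i)$, so $\fp(E_n)=\sum_i\fp(\ga_i^{(n)})\to\sum_i\fp(\ga_i)=\fp(V)$ by Lemma \ref{lem2}. Finally, for $\cL^2$-a.e.\ $p$ (namely those with $d(p,\Ga)>0$), for $n$ large the straight-line homotopy between $\ga_i^{(n)}$ and $\ga_i$ avoids $p$, so $\ind_{\ga_i^{(n)}}(p)=\ind_{\ga_i}(p)$ and $\chi_{E_n}(p)=\chi_E(p)$; since $\Ga$ is $\cL^2$-negligible and the $E_n$ are equibounded, dominated convergence yields $\chi_{E_n}\to\chi_E$ in $L^1(\R^2)$.

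The main obstacle is the transversalization in the second paragraph: one must make every intersection among the finitely many approximating curves transversal --- so that the multiplicity set is finite and Lemma \ref{lemV=V_E} becomes applicable --- while keeping the perturbation small enough in $W^{2,p}$ that it neither spoils the energy convergence nor, crucially, disturbs the a.e.\ winding-number identity underlying $\chi_{E_n}\to\chi_E$. I would resolve this by taking a transversalizing perturbation of size $o(1)$ in $W^{2,p}$ (for instance generic small translations or diffeomorphisms applied component-wise): any such perturbation is eventually too small to change the winding numbers at a fixed $p\notin\Ga$, so the limit set $E$ is left untouched.
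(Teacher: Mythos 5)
Your proof is correct, and its skeleton coincides with the paper's: represent $V$ by finitely many $W^{2,p}$ curves, approximate them by smooth immersions so that the set $\{\te_V>1\}$ of the approximants becomes finite, invoke Lemma \ref{lemV=V_E} to realize the approximating varifolds as $V_{E_n}$ with $E_n$ the parity set \eqref{ref9}, and identify the limit through Lemma \ref{lemindici2} and Remark \ref{remindici2}. Two steps are implemented differently, and the comparison is instructive. First, where you obtain finiteness of the multiplicity set by a generic $C^\infty$-small perturbation making all self- and mutual intersections transversal (multijet transversality, with transversal crossings of compact curves being discrete hence finite), the paper instead approximates each $\ga_i$ by \emph{analytic} regular immersions $\ga_{i,n}$ and deduces that the intersection set \eqref{ref19} is finite; your route is in one respect more robust, since the analyticity argument tacitly assumes distinct approximating arcs never overlap along a subarc (two analytic curves sharing an accumulating set of intersections can coincide on an arc, so finiteness needs a genericity choice anyway), an issue your transversal perturbation excludes by construction --- at the price of having to check, as you do, that the perturbation stays $W^{2,p}$-small and keeps the curves immersed. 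Second, for the $L^1$-convergence the paper argues by compactness: Riesz--Fr\'echet--Kolmogorov yields a subsequence with $\chi_{E_n}\to\chi_F$ in $L^1$, and $F=E$ is then identified via Lemma \ref{lemindici2}; you instead prove convergence directly, observing that for each fixed $p\notin\Ga$ the straight-line homotopy between $\ga_i^{(n)}$ and $\ga_i$ eventually misses $p$, so all winding numbers $\ind_{\ga_i^{(n)}}(p)$ stabilize, giving $\chi_{E_n}\to\chi_E$ pointwise $\cL^2$-a.e.\ and then in $L^1$ by dominated convergence on the uniformly bounded $E_n$. Your version buys convergence of the full sequence without a subsequence extraction and uniqueness-of-limit step, while the paper's is shorter at that point; the remaining verifications (energy convergence from $W^{2,p}$-convergence together with Lemma \ref{lem2}, varifold convergence from $C^1$-convergence, uniform boundedness from the length bound) agree with the paper's.
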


\begin{proof}
	Let $V=\sum_{i=1}^N (\ga_i)_\sharp(\bv(S^1,1))\in\cA(E)$ with $\ga_i\in W^{2,p}$ regular. For any $i$ let $\{\ga_{i,n}\}_{n\in\N}$ be a sequence of analytic regular immersions such that $\ga_{i,n}\to\ga_i $ in $W^{2,p}$ as $n\to\infty$. Hence the set
	\beq \label{ref19}
	\{x\in\R^2\,|\, \exists i,j,t\neq \tau: \,\, \ga_i(t)=\ga_j(\tau)  \}
	\eeq
	is finite. Let $V_n=\sum_{i=1}^N (\ga_{i,n})_\sharp(\bv(S^1,1))$. By \eqref{ref19} we can define $E_n$ as in Lemma \ref{lemV=V_E}, so that $V_n=V_{E_n}$. Moreover we have that
	\beqs
	\fp(E_n)<+\infty, \qquad\lim_{n\to\infty} \fp(V_{E_n})=\lim_{n\to\infty} \fp(E_n)=\fp(V), \qquad	V_{E_n}\to V.
	\eeqs
	By uniform convergence of $\ga_{i,n}$ we get that for any $\ep>0$ there is $n_\ep$ such that
	\beqs
	\bigcup_{i=1}^N (\ga_{i,n}) \con I_{\frac{\ep}{2}} \left(\bigcup_{i=1}^N (\ga_i)\right)   \quad\qquad\forall n\ge n_\ep,
	\eeqs
	where $I_{\frac{\ep}{2}}$ denotes the $\frac{\ep}{2}$ open tubolar neighborhood. Hence up to passing to a subsequence by Riesz-Fr\'{e}chet-Kolmogorov we have that $\chi_{E_n}$ converges strongly in $L^2(\R^2)$, and then in $L^1(\R^2)$ and pointwise almost everywhere to the characteristic function of a closed set $F$. Using the definition of $E_n$ and Lemma \ref{lemindici2} together with Remark \ref{remindici2} we have that $F=E$, and the proof is completed.\\
\end{proof}

\begin{cor} \label{cormin}
	Let $E\con\R^2$ be measurable and bounded with $\cA(E)\neq\epty$. Then
	\beqs
		\exists\, \min\big\{ \fp(V)\,\,|\,\,V\in\cA(E) \big\}.
	\eeqs
\end{cor}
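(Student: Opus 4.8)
The plan is to apply the direct method, using Proposition \ref{propapprox} to convert a minimizing sequence of varifolds into a minimizing sequence of \emph{sets} to which Lemma \ref{lemmanecess} can be applied. Set $m:=\inf\{\fp(V)\,|\,V\in\cA(E)\}$; since $\cA(E)\neq\epty$ and every $V\in\cA(E)$ has $\fp(V)<+\infty$ by definition, $m$ is finite. First I would fix a minimizing sequence $V_k\in\cA(E)$ with $\fp(V_k)\to m$. Before anything else I would arrange that the supports are uniformly bounded: if a connected component of $\supp V_k$ is disjoint from $\pa E$, then by the defining constraints of $\cA(E)$ (namely $\cH^1(\{\te_{V_k}\mbox{ odd}\}\sm\cF E)=0$) it carries even multiplicity $\cH^1$-a.e., so dropping the curves forming it produces a varifold still lying in $\cA(E)$ but with strictly smaller $\fp$; hence I may assume every component of $\supp V_k$ meets $\pa E$. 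Since $\pa E$ is fixed and bounded and $\mu_{V_k}(\R^2)\le\fp(V_k)\le C$ controls the total length (hence the diameter of each component), all the $\supp V_k$ then lie in a fixed ball $B_R(0)$, in the spirit of the reduction carried out at the beginning of Subsection \ref{neccond}.

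Next I would discretize each $V_k$. For every $k$, Proposition \ref{propapprox} provides sets $E_{k,n}$ with $\fp(E_{k,n})<+\infty$, $\chi_{E_{k,n}}\to\chi_E$ in $L^1$ and $\fp(E_{k,n})\to\fp(V_k)$ as $n\to\infty$, each with $\{\te_{V_{E_{k,n}}}>1\}$ finite and contained in a tubular neighbourhood of $\supp V_k\con B_R(0)$. A diagonal choice $n(k)$ with $\|\chi_{E_{k,n(k)}}-\chi_E\|_{L^1}<1/k$ and $|\fp(E_{k,n(k)})-\fp(V_k)|<1/k$ then yields sets $F_k:=E_{k,n(k)}$ that are uniformly bounded (say in $B_{R+1}(0)$), satisfy $\chi_{F_k}\to\chi_E$ in $L^1$, have $\{\te_{V_{F_k}}>1\}$ finite, and obey $\fp(F_k)\to m$. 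In particular $\rfp(E)\le\liminf_k\fp(F_k)=m<+\infty$, so the hypotheses of Lemma \ref{lemmanecess} are met.

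I would then extract compactness and pass to the limit. The varifolds $V_{F_k}$ have mass $\mu_{V_{F_k}}(\R^2)\le\sup_k\fp(F_k)<+\infty$ and support in the fixed ball $B_{R+1}(0)$, so up to a subsequence $V_{F_k}$ converges in the sense of varifolds to some $V_\infty$. By Lemma \ref{lemmanecess}, $V_\infty\in\cA(E)$, whence $\fp(V_\infty)\ge m$. For the reverse inequality I would invoke lower semicontinuity: the proof of Lemma \ref{lemmanecess} exhibits the arclength parametrizations converging weakly in $W^{2,p}$, so weak lower semicontinuity of the $L^p$ norm of the second derivative together with the additivity $\cE_p(V)=\sum_i\cE_p(\ga_i)$ of Lemma \ref{lem2} gives $\cE_p(V_\infty)\le\liminf_k\cE_p(V_{F_k})$, while weak* lower semicontinuity of mass gives $\mu_{V_\infty}(\R^2)\le\liminf_k\mu_{V_{F_k}}(\R^2)$; adding these, $\fp(V_\infty)\le\liminf_k\fp(V_{F_k})=\lim_k\fp(F_k)=m$. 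Hence $\fp(V_\infty)=m$ and the infimum is attained.

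I expect the main obstacle to be precisely making the passage to the limit legitimate: the constraint $\cH^1(\{\te_V\mbox{ odd}\}\De\cF E)=0$ defining $\cA(E)$ is not stable under varifold convergence by itself (multiplicities may concentrate or cancel), which is exactly why the argument must be routed through the sets $F_k$ so that the delicate parity bookkeeping of Lemma \ref{lemmanecess} --- carried out via degree mod $2$ and the convergence of the associated integral currents $\pa[|F_k|]$ --- can be used to certify $V_\infty\in\cA(E)$. The uniform boundedness reduction and the lower semicontinuity of $\cE_p$ are comparatively routine once this is in place.
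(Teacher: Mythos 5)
Your proposal is correct and follows essentially the same route as the paper: a minimizing sequence $V_k\in\cA(E)$ with uniformly bounded supports, discretization into sets via Proposition \ref{propapprox} with a diagonal argument, Lemma \ref{lemmanecess} to certify that the varifold limit lies in $\cA(E)$, and lower semicontinuity of $\fp$ to conclude minimality. The only differences are cosmetic --- you extract the limit from the approximating sets $F_k$ rather than first passing to a varifold limit of the $V_k$ themselves, and you spell out two steps the paper leaves implicit (discarding components of $\supp V_k$ disjoint from $\pa E$ to get uniform boundedness, and the explicit $W^{2,p}$-weak lower semicontinuity argument), both of which are correct.
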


\begin{proof}
	Let $V_k$ be a minimizing sequence in $\cA(E)$. Up to subsequence we can assume that $V_k\to V$ in the sense of varifolds and the supports $\supp V_k$ are uniformly bounded. By Proposition \ref{propapprox} using a diagonal argument we find a sequence of uniformly bounded sets $E_k$ such that
	\beqs
		\begin{aligned}[c]
			& \chi_{E_k}\to\chi_E \qquad\mbox{in }L^1(\R^2),\\
			& V_{E_k}\to V \qquad \mbox{as varifolds},
	\end{aligned}
	\qquad\qquad\qquad
	\begin{aligned}[c]
	& \fp(E_k)\le C<+\infty,\\
	& \lim_k \fp(E_k)=\lim_k \fp(V_k)=\inf_{\cA(E)} \fp \ge \fp(V),
	\end{aligned}
	\eeqs
	and $\{p\,|\,\te_{V_{E_k}}(p)>1\}$ is finite. Hence $E_k$ is a possible approximating sequence of $E$ by regular sets, i.e. a competitor in the calculation of the relaxation $\rfp(E)$. Then by Lemma \ref{lemmanecess} we get that $V\in\cA(E)$, and therefore $V$ minimizes $\fp$ on $\cA(E)$.\\
\end{proof}

\noindent Now Proposition \ref{propapprox} together with Corollary \ref{cormin} readily imply Theorem \ref{thmmain}.\\

\subsection{Comment on the $p=1$ case} The characterization of the relaxed energy given by Theorem \ref{thmmain} fails in the $p=1$ case. As stated in Section 1, many estimates used in the $p>1$ case have an analogous formulation in case $p=1$. However, if $I\con\R$ is a bounded interval, functions $u\in W^{2,1}(I)$ do not have good compactness properties. In fact even if $u\in W^{2,1}(I)$ implies that $u'\in W^{1,1}(I)=AC(\bar{I})$ and hence $u\in C^1$, the immersion $W^{2,1}(I)\hookrightarrow C^1(\bar{I})$ is not continuous.\\
Since $W^{2,1}(I)\hookrightarrow W^{1,p}(I)$ for any $p\in[1,\infty)$, we have that $W^{2,1}(I)$ compactly embeds only in $C^{0,\al}(\bar{I})$ for any $\al\in(0,1)$. This implies that the convergence of the curves defining the boundary of sets $E_n$ with $\cF_1(E_n)\le C$ is much weaker than in the $p>1$ case.\\

\noindent One of the main differences is the following. As we will show in Subsection \ref{subsex} the $\cF_p$ energy of polygons is infinite if $p>1$. Instead if $E$ is a regular polygon, i.e. a set $E\con\R^2$ whose boundary is the image of an injective piecewise $C^2$ closed curve, it can happen that $\overline{\cF_1}(E)<+\infty$. For instance, consider a square $Q$ in the plane: in small neighborhoods of the four vertices the boundary $\pa Q$ can be approximated by a piece of circumference of radius converging to $0$ with finite bounded energy converging to $\frac{\pi}{2}$. This is ultimately due to the invariance of the energy $\cF_1$ under rescaling, a property that is absent if $p>1$. This implies that a possible limit varifold does not verify the flux property (because of the arguments in the proof of Proposition \ref{poligoni}).\\

\noindent We believe that the presence of vertices in the boundary of the limit set is the main difference with the $p>1$ case and that sets $E$ with $\overline{\cF_1}(E)\le C$ have at most countably many vertices, each of them giving an additional contribution to the energy equal to the angle described by the vertex.\\


\section{Remarks and applications}

\subsection{Comparison with \cite{BeMu04}, \cite{BeMu07}}\label{BM} In these works Bellettini and Mugnai develop a characterization of the following relaxed functional. For simplicity we reduce ourselves to the case $p=2$. Let $E\con\R^2$ be measurable and define the energy
\beq
	G(E)=\begin{cases}
		\int_{\pa E} 1 + |k_{\pa E}|^2\, d\cH^1  & E \mbox{ is of class } C^2,\\
		+\infty  & \mbox{otherwise.}
	\end{cases}
\eeq
Then the functional $\overline{G}$ is the $L^1$-relaxation of $G$. Clearly
\beqs
	G(E)<+\infty \quad\Rightarrow\quad \cF_2(E)=G(E),
\eeqs
\beqs
	\rfd(E)\le \overline{G}(E) \qquad\forall E.
\eeqs
The precise characterization of $\overline{G}$ is discussed in \cite{BeMu04} and \cite{BeMu07}; here we just want to point out that
\beqs
	\exists E:\qquad \rfd(E)<\overline{G}(E)<+\infty.
\eeqs
In fact an example is the set $E_0$ in Fig. \ref{figBM} described in the Example 4.4 in \cite{BeMu07}. Let $\ga_1,\ga_2$ be as in Fig. \ref{figBM}. In \cite{BeMu07} it is proved that
\beqs
	\overline{G}(E)>\cF_2(\ga_1)+\cF_2(\ga_2).
\eeqs

\begin{figure}[h]
	\begin{center}
		\begin{tikzpicture}[scale=2]
		\draw
		(0,-0.5)--(0,0.5);
		\draw
		(-0.5,0)--(0.5,0);
		\filldraw[fill=white, pattern=north east lines]
		(0,0.5)to[out=90, in=270, looseness=1](-0.5,1)to[out=90, in=180, looseness=1](0,1.5)to[out=0, in=90, looseness=1](0.5,1)to[out=270, in=90, looseness=1](0,0.5);
		\filldraw[rotate=90, fill=white, pattern=north east lines]
		(0,0.5)to[out=90, in=270, looseness=1](-0.5,1)to[out=90, in=180, looseness=1](0,1.5)to[out=0, in=90, looseness=1](0.5,1)to[out=270, in=90, looseness=1](0,0.5);
		\filldraw[rotate=180, fill=white, pattern=north east lines]
		(0,0.5)to[out=90, in=270, looseness=1](-0.5,1)to[out=90, in=180, looseness=1](0,1.5)to[out=0, in=90, looseness=1](0.5,1)to[out=270, in=90, looseness=1](0,0.5);
		\filldraw[rotate=270, fill=white, pattern=north east lines]
		(0,0.5)to[out=90, in=270, looseness=1](-0.5,1)to[out=90, in=180, looseness=1](0,1.5)to[out=0, in=90, looseness=1](0.5,1)to[out=270, in=90, looseness=1](0,0.5);
		\path[font=\normalsize]
		(-1,1)node[left]{$E_0$};
		\path[font=\normalsize]
		(0.75,1.3)node[left]{$\ga_2$};
		\path[font=\normalsize]
		(1.3,-0.6)node[left]{$\ga_1$};
		\path[font=\tiny]
		(0.05,0.5)node[left]{$(0,1)$};
		\path[font=\tiny]
		(0.7,0.1)node[left]{$(1,0)$};
		\path[font=\tiny]
		(0.5,-0.5)node[left]{$(0,-1)$};
		\path[font=\tiny]
		(-0.1,-0.1)node[left]{$(-1,0)$};
		\end{tikzpicture}
	\end{center}
	\caption{Picture of the set $E_0$ in Example 4.4 of \cite{BeMu07}. The curve $\ga_1$ parametrizes the left and the right components, while $\ga_2$ parametrizes the upper and lower components. The varifold $(\ga_1)_\sharp(\bv(S^1,1))+(\ga_1)_\sharp(\bv(S^1,1))$ belongs to $\cA(E_0)$, and it has multiplicity equal to $1$ on $\pa E_0$ and equal to $2$ on the cross in the middle of the picture.}\label{figBM}
\end{figure}
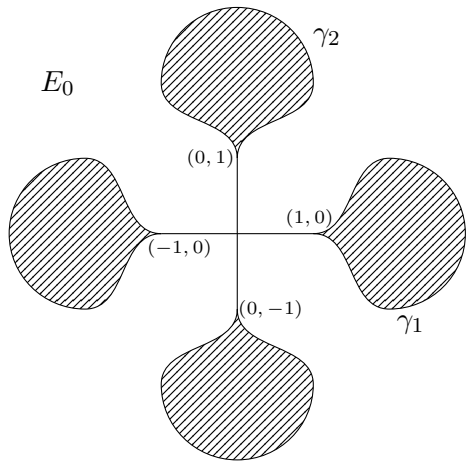

\noindent Here we want to prove that
\beq \label{ref14}
	\rfd(E)= \cF_2(\ga_1)+\cF_2(\ga_2).
\eeq
Observe that $\ga_1,\ga_2$ carry inside $B_1(0)$ a $\cF_2$ energy equal to $8$.\\
Since $\rfd(E_0)<+\infty$ there exists a varifold $V=\sum_{i=1}^N (\ga_1)_\sharp(\bv(S^1,1))\in\cA (E)$. Up to renaming and reparametrization assume $\ga_1(0)=(1,0)$, $\ga_1'(0)=-(1,0)$, and $\ga_1|{[-T,0]}$ joins $(0,1)$ and $(0,1)$ having support contained in $\cF E_0\sm \overline{B_1(0)}$. Since $\ga_1$ is $C^1$ and closed, by the above discussion there exists a first time $\tau>0$ such that $\ga_1(\tau)\in\{(1,0),(0,1),(-1,0),(0,-1)\}$. We divide two cases.\\
1) If $\ga_1(\tau)\in \{(0,1),(0,-1)\}$, arguing like in the proof of inequality \eqref{ineq1} one has
\beqs
	\frac{\pi}{2}\le \big[L(\ga_1|_{(0,\tau)})\big]^{\frac{1}{2}}\big[\cE_2(\ga_1|_{(0,\tau)})\big]^{\frac{1}{2}}\le \frac{1}{2}\big( L(\ga_1|_{(0,\tau)}) + \cE_2(\ga_1|_{(0,\tau)}) \big),
\eeqs
then $\cF_2(\ga_1|_{(0,\tau)})\ge\pi>2$.\\
2)If $\ga_1(\tau)=(1,0)$ by an analogous argument one gets $\cF_2(\ga_1|_{(0,\tau)})\ge 2\pi>2$.\\
Hence in any case it is convenient for the curve $\ga$ to pass first through the point $(-1,0)$. By the characterization of Theorem \ref{thmmain} equality \eqref{ref14} follows.\\

\noindent In this sense we can look at the relaxation $\rfp$ as a generalization of the energy $\overline{G}$, in the sense that $\cF_p$ admits a wider class of regular objects, i.e. sets $E$ with $\cF_p(E)<+\infty$, and this implies that the relaxed energy $\rfp$ is naturally strictly less than $\overline{G}$ on some sets.\\

\subsection{Inpainting}

Here we describe a simple but significant application of the relaxed functional $\rfp$ given by Theorem \ref{thmmain}. Such application arises from the inpainting problem that roughly speaking consists in the reconstruction of a part of an image, knowing how the remaining part of the picture looks like. This problem as stated is quite involved (\cite{BeCaMaSa11}). Assuming the only two colours of the image are black and white, as already pointed out for example in \cite{AmMa03}, one can think that the black shape contained in lost part of the image is consistent with the shape minimizing a suitable functional depending on length and curvature of its boundary. In such a setting the known part of the image plays the role of the boundary conditions. On different scales one can ask for the optimal unknown shape to minimize a weighted functional like \eqref{ref11}, where one can give more importance to the length or to the curvature term.\\
Now we formalize the problem and we give a variational result.\\

\noindent Fix $p\in (1,\infty)$. In $\R^2$ consider the set $H$ defined as follows. Let $Q_1,Q_2$ be the squares $Q_1=\{(x,y):0\le x\le 10,0\le y\le 10\},Q_2=\{(x,y):-10\le x\le 0,-10\le y\le 0  \}$, modify the squares in small neighborhoods of the vertices into convex sets $\tilde{Q}_1,\tilde{Q}_2$ with smooth boundary. Finally let
\beq
	H=\big(\tilde{Q}_1\cup \tilde{Q}_2\big)\sm B_1(0).
\eeq
Let $\la\in(0,\infty)$ and $\flp$ be the functional
\beq \label{ref11}
\flp(E)=\begin{cases}
	\la\mu_{V_E}(\R^2)+\cE_p(V_E)  & \begin{split}
		\mbox{if }  \,&V_E=\sum_{i\in I} (\ga_i)_\sharp (\bv(S^1,1)),\quad\ga_i:S^1\to\R^2 \,\,C^2\mbox{-immersion}, \\ &\sharp I<+\infty,		
	\end{split}\\
	+\infty & \mbox{otherwise.}
\end{cases}
\eeq
Analogously to the functional $\cF_p$, we have a well defined characterization of the $L^1$-relaxed functional $\rflp$.\\
We want to solve the minimization problem
\beq \label{pbinp}
	\mathfrak{P} = \min\big\{ \rflp(E) \,\,\big|\,\, E\con\R^2 \mbox{ measurable s.t. }E\sm B_1(0)=H \big\},
\eeq
under the hypothesis of $\la$ suitably small. The heuristic idea is that a good candidate minimizer is given by the set
\beq
	E_0=\big[(Q_1\cup Q_2)\cap \overline{B_1(0)}\big]\cup H,
\eeq
which has finite $\cF_p$ energy. For a qualitative picture see Fig. \ref{figinp}.

\begin{figure}[h]
	\begin{center}
		\begin{tikzpicture}[scale=0.8]
		\draw[color=black,scale=1,domain=-3.141: 3.1412,
		smooth,variable=\t,shift={(0,0)},rotate=0]plot({1.*sin(\t r)},
		{1.*cos(\t r)});
		\filldraw[fill=white, pattern=north east lines]
		(0,1)--(0,4)to[out=90, in=180, looseness=1](0.1,4.1)--(4,4.1)to[out=0, in=90, looseness=1](4.1,4)--(4.1,0.1)to[out=270, in=0, looseness=1](4,0)--(1,0)to[out=90, in=0, looseness=1](0,1);
		\filldraw[fill=white, pattern=north east lines]
		(0,-1)--(0,-4)to[out=270, in=0, looseness=1](-0.1,-4.1)--(-4,-4.1)to[out=180, in=270, looseness=1](-4.1,-4)--(-4.1,-0.1)to[out=90, in=180, looseness=1](-4,0)--(-1,0)to[out=270, in=180, looseness=1](0,-1);
		\path[font=\normalsize]
		(-1,0.5)node[left]{$H$};	
		\end{tikzpicture}\qquad\qquad
		\begin{tikzpicture}[scale=0.8]
		\draw[color=black,scale=1,domain=-3.141: 3.1412,
		smooth,variable=\t,shift={(0,0)},rotate=0]plot({1.*sin(\t r)},
		{1.*cos(\t r)});
		\filldraw[fill=white, pattern=north east lines]
		(0,0)--(0,4)to[out=90, in=180, looseness=1](0.1,4.1)--(4,4.1)to[out=0, in=90, looseness=1](4.1,4)--(4.1,0.1)to[out=270, in=0, looseness=1](4,0)--(0,0);
		\filldraw[fill=white, pattern=north east lines]
		(0,0)--(0,-4)to[out=270, in=0, looseness=1](-0.1,-4.1)--(-4,-4.1)to[out=180, in=270, looseness=1](-4.1,-4)--(-4.1,-0.1)to[out=90, in=180, looseness=1](-4,0)--(0,0);
		\path[font=\normalsize]
		(-1,0.5)node[left]{$E_0$};
		\end{tikzpicture}
	\end{center}
	\caption{Qualitative pictures of the datum $H$ and the minimizer $E_0$.}\label{figinp}
\end{figure}
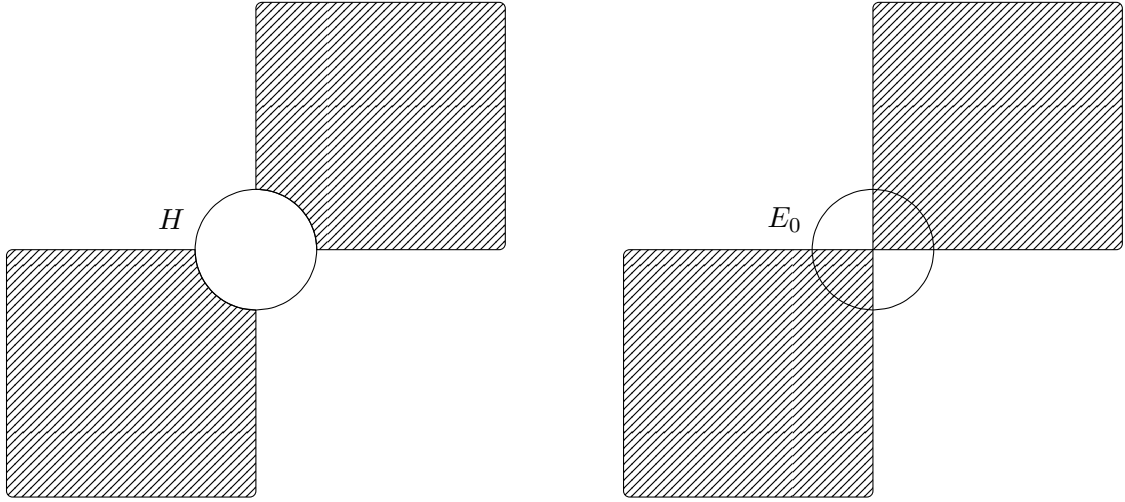

\begin{remark}
	Observe that if $\overline{G}$ is the relaxed functional defined in \cite{BeMu07} recalled in Subsection \ref{BM}, then $\overline{G}(E_0)=+\infty$, and hence $E_0$ will never be detected by a minimization problem \eqref{pbinp} analogously defined with the functional $\overline{G}$.\\
\end{remark}

\noindent We have the following result.
\begin{prop}
	There exists $\la_0\in\big(0,\frac{\pi}{2}\big)$ such that for any $\la\in(0,\la_0)$ the set $E_0$ is the unique minimizer of problem $\fP$.
\end{prop}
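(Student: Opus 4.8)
The plan is to establish the two facts $\rflp(E_0)=4\la + K$ and $\rflp(E)>\rflp(E_0)$ for every competitor $E\neq E_0$, once $\la$ is small enough; existence and uniqueness of the minimizer of $\fP$ then follow simultaneously. Here $K$ denotes the contribution of the prescribed exterior boundary.

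First I would verify that $E_0$ is admissible. Inside $\overline{B_1(0)}$ its boundary consists of the two coordinate diameters of $B_1(0)$ (which carry zero curvature), while outside $B_1(0)$ it coincides with the fixed smooth arcs of $\pa\tilde{Q}_1,\pa\tilde{Q}_2$ coming from the datum $H$. Near each of the four points $(\pm1,0),(0,\pm1)$ the interior segment and the exterior edge are collinear (both lie on a coordinate axis), so these pieces glue into a single smooth closed immersion $\ga$ passing through the origin twice, transversally, giving $\te_{V_{E_0}}=2$ at the origin and $1$ elsewhere. Hence $V_{E_0}=\ga_\sharp(\bv(S^1,1))$ with $\ga\in C^2$, so $\cF_p(E_0)<+\infty$ and, by the $\flp$-analogue of Theorem \ref{thmmain}, $\rflp(E_0)=\flp(E_0)$. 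Writing $K:=\la\,\cH^1(\pa E_0\sm B_1(0))+\cE_p(V_{E_0}\llcorner(\R^2\sm B_1(0)))$ for the fixed exterior contribution and using that the two diameters have total length $4$ and no curvature, this gives $\rflp(E_0)=4\la+K$.

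Next, for an arbitrary competitor $E$ with $E\sm B_1(0)=H$ and $\rflp(E)<+\infty$, I would use the $\flp$-version of Theorem \ref{thmmain} to choose $V=\bv(\Ga,\te_V)\in\cA(E)$ with $\rflp(E)=\la\mu_V(\R^2)+\cE_p(V)$. The constraint forces $\cF E\sm\overline{B_1(0)}=\cF H\sm\overline{B_1(0)}$, so $\Ga$ meets $\pa B_1(0)$ only at the four points $(\pm1,0),(0,\pm1)$, where $V$ must be tangent to the corresponding coordinate axis in order to match the straight edges of $\pa\tilde{Q}_i$ as a $C^1$ immersion. I would first argue that one may take $V=V_H$ outside $\overline{B_1(0)}$ (replacing the exterior part of $V$ by $V_H$ preserves membership in $\cA(E)$ and does not increase the energy, since $V\ge V_E=V_H$ there and the exterior boundary is prescribed), so the exterior contributes exactly $K$. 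Since the curves are closed immersions, the interior arcs have no free endpoints, hence are chords joining the marked points. Everything then reduces to the interior lower bound $\la\,\mu_V(B_1(0))+\cE_p(V\llcorner B_1(0))\ge 4\la$, with equality only for the two axial diameters. If $\cE_p(V\llcorner B_1(0))=0$, the interior chords are straight, and a straight chord tangent to a coordinate axis at a boundary point must be the corresponding diameter; matching all four tangents forces exactly the two diameters, i.e. $E=E_0$, with interior energy $4\la$. If some interior arc is bent, it either joins a horizontal to a vertical tangent (``adjacent'' endpoints), so its total curvature is at least $\pi/2$ and, by Hölder exactly as in Lemma \ref{lem0}, $\cE_p\ge(\pi/2)^p/L^{p-1}$, whence $\la L+\cE_p\ge\min_{L>0}\big(\la L+(\pi/2)^pL^{1-p}\big)=c_p\,\la^{(p-1)/p}$ for a constant $c_p>0$; or it joins ``opposite'' endpoints with a bend, in which case its length exceeds $2$ and its curvature energy is strictly positive, so its contribution strictly exceeds $2\la$. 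In every configuration other than the two diameters the interior energy is thus at least $\min\{c_p\la^{(p-1)/p},\,4\la+\eta\}$ for some $\eta>0$; since $(p-1)/p<1$ one has $c_p\la^{(p-1)/p}>4\la$ for $\la<(c_p/4)^p$, and after shrinking to absorb the bent-diameter case one obtains $\la_0\in(0,\pi/2)$ such that for $\la\in(0,\la_0)$ every $E\neq E_0$ satisfies $\rflp(E)>4\la+K=\rflp(E_0)$.

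I expect the main obstacle to be the rigorous reduction to this four-point interior problem at the level of varifolds: a general $V\in\cA(E)$ may a priori have interior branch points, higher multiplicity, or stray components inside $B_1(0)$, and one must show that none of these exotic configurations can undercut the two straight diameters. Making the ``replace the exterior of $V$ by $V_H$'' step precise, so that the exterior truly contributes the constant $K$, together with a total-curvature lower bound valid for arbitrary admissible interior configurations rather than just the two model pairings, is where the real work lies; the scaling comparison $c_p\la^{(p-1)/p}$ versus $4\la$ is then the mechanism that selects $E_0$ in the small-$\la$ regime and accounts for the threshold $\la_0<\pi/2$.
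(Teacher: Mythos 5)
Your energy mechanism at the heart of the case analysis is the same one the paper uses: a total-curvature lower bound ($\pi/2$ for an arc joining perpendicular tangents, $\pi$ for an arc returning to its starting point) fed into H\"older, with the scaling $\la L + cL^{1-p}\ge c_p\la^{(p-1)/p}\gg 4\la$ as $\la\to 0$ selecting the two straight diameters. In fact your unconstrained optimization over $L$ is a small improvement on the paper's treatment of the ``adjacent'' case: the paper first proves a mass bound $\mu_V(\R^2)\le\overline{C}$ \emph{independent of} $\la$, via the monotonicity identity \eqref{ref3}, and then estimates $\flp(\ga_1|_{(0,\tau)})\ge\la\sqrt2+\frac{\pi}{2}\overline{C}^{-p/p'}$; your version makes that mass bound unnecessary for this step. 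Two local remarks, though: your explicit case list omits an arc that leaves and re-enters at the \emph{same} marked point, whose length can be arbitrarily small, so your ``opposite endpoints'' bound (length $>2$) does not apply to it — the paper handles this via Young's inequality $\pi\le L/p'+\cE_p/p$, giving $\flp\ge\min\{1,\la p'\}\pi>2\la$ once $\la_0<\pi/2$ (your H\"older mechanism with total curvature $\pi$ would also work); and closed loops entirely inside $B_1(0)$ (total curvature $\ge 2\pi$ by Lemma \ref{lem0}) should be ruled out by the same scaling, but need to be mentioned.

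The genuine gap is exactly the one you flag, and it is where the two proofs diverge structurally. Your reduction to ``interior chords joining the four marked points'' is not valid for an arbitrary $V\in\cA(E)$: membership in $\cA(E)$ requires only $\pa E\con\Ga$, $V_E\le V$ and the parity conditions, so the support may carry even-multiplicity ghost arcs crossing $\pa B_1(0)$ transversally at non-marked points, branch points, and higher multiplicity, none of which your four-point picture accounts for. Likewise your surgery ``replace the exterior part of $V$ by $V_H$'' must output a varifold that is still a \emph{finite sum of closed} $C^1\cap W^{2,p}$ \emph{immersions} satisfying the parity constraints, and cutting/regluing at the four points does not obviously preserve this structure (the exterior lower bound $\ge K$ can instead be obtained directly from $V\ge V_E$ and locality of the generalized curvature, without any replacement). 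The paper's architecture is designed precisely to avoid both steps: it runs the direct method on a minimizing sequence $E_n$ of $\fP$, each satisfying the constraint $E_n\sm B_1(0)=H$, with $\rflp(E_n)$ attained by curves $\ga_{i,n}$ converging strongly in $C^1$; this strong convergence is what forces the limit curves to meet $\pa B_1(0)$ tangentially except at $(\pm1,0),(0,\pm1)$, after which one only analyzes the \emph{first transversal crossing} of a single curve $\ga_1$ continuing from the fixed exterior arc, and concludes $E=E_0$ from the comparison \eqref{ref12} — no global classification of interior configurations is ever needed. Your pointwise-comparison architecture would indeed deliver existence and uniqueness simultaneously, but until the reduction and the surgery are justified, the proof is incomplete at its load-bearing step.
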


\begin{proof}
	Let us first observe that varifolds associated to sets with energy sufficiently close to the infimum of the problem have mass uniformly bounded independently of $\la$. More precisely, suppose that $E$ is a competitor such that $\rflp(E)\le \inf \mathfrak{P} + 1$, and let $\rflp(E)=\flp(V)$ for some $V\in\cA(E)$. Then
	\[
	\int |k_V|^p\,d\mu_V \le 1 + \rflp(E_0) \le 1+ \cF_{\frac\pi2,p}(E_0) \eqqcolon C_1.
	\]
	Using \eqref{ref3} with $\si=1$ and $\ro\to+\infty$ on the varifold $V$ with $x_0=0$ we get
	\[
	\begin{split}
	\mu_V(B_1(0)) 
	&\le - \int_{\R^2\sm B_1} \left\lgl k_V, \frac{x}{|x|} \right\rgl \, d\mu_V(x) - \int_{B_1} \lgl k_V,x\rgl \, d\mu_V(x) \\
	&\le C(H) + \int_{B_1} |k_V| \, d\mu_V \\
	&\le C(H) + C_1^{\frac1p}\mu_V(B_1(0))^{\frac{1}{p'}}.
	\end{split}
	\]
	Hence $\mu_V(\R^2) \le C(H) + \mu_V(B_1(0)) \le \overline{C}=\overline{C}(H,p,E_0)$ and $\overline{C}$ is independent of $\la$.
	
	Now let $E_n$ be a minimizing sequence of problem $\fP$. By Theorem \ref{thmmain} and Lemma \ref{lem2} we can write $\rflp(E_n)=\sum_{i\in I_n} \flp(\ga_{i,n})$ for some curves $\ga_{i,n}$. Up to subsequence $I_n=I$ and the curves converge strongly in $C^1$ and weakly in $W^{2,p}$ to curves $\ga_i$. In particular $E_n\to E$ in the $L^1$ sense, and
	\beq \label{ref12}
		\rflp(E)\le \inf \fP \le \rflp(E_0)=\flp(E_0).
	\eeq
	by lower semicontinuity. Moreover by $C^1$ strong convergence we have that
	\beqs
		\forall i \forall p\in \big((\ga_i)\cap \pa B_1(0)\big)\sm\{(1,0),(0,1),(-1,0),(0,-1)\} \quad \Rightarrow\quad (\ga_i) \mbox{ is tangent to $\pa B_1(0)$ at } p.
	\eeqs
	Observe that $E_0$ carries inside $B_1(0)$ a $\flp$ energy equal to $4\la$.\\
	Arguing as in Subsection \ref{BM}, since $\rflp(E)<+\infty$ there exists a varifold $V=\sum_{i=1}^N (\ga_1)_\sharp(\bv(S^1,1))\in\cA (E)$. Up to renaming and reparametrization assume $\ga_1(0)=(1,0)$, $\ga_1'(0)=-(1,0)$, and $\ga_1|{[-T,0]}$ joins $(0,1)$ and $(1,0)$ having support contained in $\cF H\sm \overline{B_1(0)}$. Since $\ga_1$ is $C^1$ and closed, by the above discussion there exists a first time $\tau>0$ such that $\ga_1$ intersects transversally $\pa B_1(0)$. Also such transversal intersection can take place only at one of the points in $\{(1,0),(0,1),(-1,0),(0,-1)\}$. We divide two cases.\\
	1) If $\ga_1(\tau)\in\{(0,1),(0,-1)\}$, observing that there is $\overline{C}>0$ depending only on the problem $\fP$ such that $L(\ga_i)\le \overline{C}$ for any $i$, then arguing like in \eqref{ineq1} we get
	\beqs
		\flp(\ga_1|_{(0,\tau)}) \ge \la \sqrt{2} + \frac{\pi}{2}\frac{1}{L(\ga_1|_{(0,\tau)})^{\frac{p}{p'}}}\ge \la \sqrt{2} + \frac{\pi}{2}\frac{1}{\overline{C}} >2\la,
	\eeqs
	where the last inequality holds choosing $\la_0$ small enough.\\
	2) If $\ga_1(\tau)=(1,0)$, then by the same argument leading to \eqref{ineq1} one has
	\beq \label{ref13}
		\pi \le \frac{L(\ga_1|_{(0,\tau)})}{p'} + \frac{\cE_p(\ga_1|_{(0,\tau)})}{p}.
	\eeq
	If $\la p'\ge 1$, then $\pi\le \flp(\ga_1|_{(0,\tau)})$. If instead $\la p'<1$, then also $\frac{\la p'}{p}<1$, and multiplying \eqref{ref13} by $\la p'$ one has $\la p' \pi \le \flp(\ga_1|_{(0,\tau)})$. So we can write that $\flp(\ga_1|_{(0,\tau)})\ge \min\{1,\la p'\}\pi$. Choosing $\la_0<\frac{\pi}{2}$ then $\pi> 2\la$, and since $p'>1>\frac{2}{\pi}$ then $\la p' \pi >2\la $; hence in any case
	\beqs
		\flp(\ga_1|_{(0,\tau)})>2\la.
	\eeqs
	By inequality \eqref{ref12} we conclude that $\ga_1(\tau)=(-1,0)$ and $\pa E_0\con \cup_{i=1}^N (\ga_i)$. Hence again by the same inequality we have that $E=E_0$, and thus $\fP$ has a unique minimizer, that is $E_0$.\\
\end{proof}

\textcolor{white}{text}

\subsection{Examples and qualitative properties} \label{subsex}
In this subsection we fix $p\in(1,\infty)$ and we collect some remarks about the qualitative properties of sets $E$ having $\rfp(E)<+\infty$.\\

\noindent First we want to prove a result that is completely analogous to the Theorem 6.5 in \cite{BeDaPa93}. To this aim we need some definitions.

\begin{defn}
	Let $E\con\R^2$ be closed measurable. A point $p\in\pa E$ is called (simple) cusp if there is $r>0$ such that up to rotation and translation the set $B_r(p)\cap\pa E$ is the union of the graphs of two functions $f_1,f_2:[0,a]\to\R$ of class $C^1\cap W^{2,p}$ with $f_i(0)=f_i'(0)=0$, $f_1(x)\le f_2(x)$, and $f_1(x)=f_2(x)$ if and only if $x=0$.
\end{defn}

\noindent Also, we shall need the following definitions in the context of planar graphs.

\begin{defn}
	Let $G\con\R^2$ be a planar finite graph, i.e. a set given by the union of finitely many embeddings of $[0,1]$ of class $C^1\cap W^{2,p}$, called edges of $G$, possibly meeting only at the endpoints, called vertices of $G$. The symbols $E_G,V_G$ respectively denote the set of edges of $G$ and the set of vertices of $G$. Together with the topology of a graph $G$, it is assigned a multiplicity function $m:E_G\to\N$.\\
	For any vertex $v\in V_G$ there is $r_v>0$ such that for $0<r<r_v$ the set $H:=G\cap B_r(v)$ is a finite connected graph whose edges only meet at $v$ and with multiplicity inherited from $G$. In this notation, the local density of $G$ at $v$ is the number $\ro_G(v)=\sum_{e\in E_H} m(e)$.\\
	\noindent Now assume also that for any $v\in V_G$ and $0<r<r_v$, if $f_i$ are regular parametrizations of the edges $e_i$ of the graph $H=G\cap B_r(v)$ with $f_i(1)=v$, then for any $i$ there is $j$ such that the arclength derivatives $\dot f_i,\dot f_j$ satisfy $\dot f_i(1)=-\dot f_j(1)$. Under this assumption, we denote by $w_1(v),...,w_{N_v}(v)$ unit norm vectors identifying the possible tangent directions given by $\{\dot f_i(1)\}_i$. Hence $w_i(v)^\perp$ is the counterclockwise rotation of $w_i(v)$ of an angle equal to $\pi/2$. Finally we define
	\beqs
		I^+(w_i(v)):=\big\{ e_i\in E_H\,\,|\,\,\dot f_i(1)=\pm w_i(v),\,\,\,(\dot f_i(1),w_i(v)) \mbox{ is a negative basis of }\R^2\big\},
	\eeqs
	\beqs
		I^-(w_i(v)):=\big\{ e_i\in E_H\,\,|\,\,\dot f_i(1)=\pm w_i(v),\,\,\,(\dot f_i(1),w_i(v)) \mbox{ is a positive basis of }\R^2\big\},
	\eeqs
	and
	\beqs
		\ro^+_G(v,w_i(v))=\sum_{e_i\in I^+(w_i(v))} m(e_i),
	\eeqs
	\beqs
		\ro^-_G(v,w_i(v))=\sum_{e_i\in I^-(w_i(v))} m(e_i).
	\eeqs
	The graph $G$ is said to be regular if for any $v\in V_G$ and for any $w_i(v)$ it holds that $\ro^+_G(v,w_i(v))=\ro^-_G(v,w_i(v))$.
\end{defn}

\begin{remark}\label{rem2}
	Let $V=\sum_{i=1}^N (\ga_i)_\sharp(\bv(S^1,1))$ be a varifold in $\cA(E)$ for some set $E$. Suppose that $\Ga=\cup(\ga_i)$ is a finite planar graph $G_\Ga$. To any edge $e$ of $G_\Ga$ we assign the multiplicity function $m_\Ga(e)=\te_V(p)$ for any $p\in e$ that is not a vertex. By the flux property (Definition \ref{defflux}), the graph $G_\Ga$ with the multiplicity $m_\Ga$ is regular.
\end{remark}

\noindent We are ready to prove the following result about the energy of sets which are smooth out of finitely many cusp points. The strategy follows ideas from \cite{BeDaPa93}, but it is different in the technical parts.

\begin{thm}\label{thmcusps}
	Let $E\con\R^2$ be a closed set whose boundary is $C^1\cap W^{2,p}$ smooth at every point but at finitely many ones which are simple cusps $q_1,...,q_k$. Then
	\beq
		\overline{\cF_p}(E)<+\infty \qquad\Leftrightarrow\qquad k\mbox{ is even.}
	\eeq
\end{thm}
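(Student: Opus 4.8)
The plan is to reduce the statement, via Theorem~\ref{thmmain}, to the combinatorial question of whether $\cA(E)\neq\epty$: since the smooth part of $\pa E$ is $C^1\cap W^{2,p}$ and there are only finitely many cusps, the length and curvature energy of $\pa E$ is finite, so (taking $E$ bounded, as we may) $\rfp(E)<+\infty$ if and only if $\cA(E)\neq\epty$. I would then set up the following picture. Away from the cusps $\pa E$ is a $C^1$ one-manifold, so $\pa E$ decomposes into finitely many cusp-free smooth circles together with finitely many open arcs $A_1,\dots,A_m$ whose endpoints are cusps; since each cusp is the endpoint of exactly two branches, in the graph with the cusps as vertices and the $A_l$ as edges every vertex has degree $2$, whence this graph is a union of cycles and the number of arcs equals the number of cusps, $m=k$. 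The key local fact I would prove first is that at a simple cusp the two branches emanate tangentially \emph{in the same direction} (both graphs live over $x\ge0$ with $f_i'(0)=0$); consequently a $C^1$ immersion cannot pass from one branch to the other while staying on $\pa E$, as its velocity would have to reverse, so for any $V=\sum_i(\ga_i)_\sharp(\bv(S^1,1))\in\cA(E)$ each curve $\ga_i$ must, at every cusp it reaches, leave $\pa E$ along an auxiliary arc contained in $\overline{\Ga\sm\pa E}$, which I will call a \emph{bridge}. Finally, each $A_l\con\cF E$ carries odd multiplicity $m_l$, while by the flux property (Definition~\ref{defflux}) together with the oddness of $\te_V$ on $\cF E$ every bridge carries \emph{even} multiplicity.

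For the implication $\cA(E)\neq\epty\Rightarrow k$ even I would argue by a purely combinatorial parity count. Decompose each $\ga_i$ into its maximal subarcs lying on $\pa E$ (the \emph{arc-portions}) and its maximal subarcs lying on the bridges (the \emph{bridge-portions}); the transitions between the two types occur only at cusps. Since a curve with at least one transition alternates arc-portions and bridge-portions around the circle, the total numbers of arc-portions and of bridge-portions coming from such curves are equal, say $P_\pa=P_\Si$ (the cusp-free smooth circles contribute neither and are set aside). On one hand $P_\pa=\sum_l m_l$, because each cuspidal arc $A_l$ is traversed exactly $m_l$ times, so $P_\pa\equiv m=k\pmod2$ as every $m_l$ is odd. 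On the other hand $P_\Si$ equals the total number of traversals of the bridges, namely the sum of the (even) multiplicities of the bridges, hence is \emph{even}. Combining, $k\equiv P_\pa=P_\Si\equiv0\pmod2$, so $k$ is even. As a sanity check, for $k=1$ this already yields a contradiction: a single cusp would force $P_\pa$ odd but $P_\Si$ even, so $\cA(E)=\epty$ and $\rfp(E)=+\infty$.

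For the converse, $k$ even $\Rightarrow\cA(E)\neq\epty$, I would construct an admissible varifold. Cover each cusp-free circle by itself with multiplicity $1$. Pair the $k$ cusps into $k/2$ pairs, and join the two cusps of each pair by a simple $C^1\cap W^{2,p}$ arc $\de$ lying off $\pa E$ and tangent to the common branch directions at its endpoints; crucially $\de$ may be chosen to avoid the thin cuspidal region, so its curvature stays bounded and its $\cF_p$-energy is finite --- this is exactly where a single cusp fails, the naive cap inside the cuspidal region having curvature energy that blows up like $w^{1-p}\to\infty$ as the width $w\to0$ for $p>1$. Viewing the arcs $A_l$ as single edges and each $\de$ as a \emph{doubled} edge of a multigraph on the cusps, every cusp acquires even degree $4$, so each connected component admits an Eulerian circuit; realizing these circuits as closed immersions by pairing, at each cusp, the four tangentially aligned half-edges into two $C^1$ transitions produces curves whose varifold $V$ has $\te_V=1$ on $\cup_lA_l=\cF E$ and $\te_V=2$ on the bridges. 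Since the even bridges do not change the parity of the winding number at any point, Lemma~\ref{lemV=V_E} and Lemma~\ref{lemindici2} show that $V$ reconstructs $E$, and the remaining defining conditions of $\cA(E)$ are immediate; thus $\cA(E)\neq\epty$.

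The step I expect to be the main obstacle is making the decomposition in the obstruction rigorous for an \emph{arbitrary} $V\in\cA(E)$: one must show that the covering curves leave $\pa E$ only at the cusps --- ruling out, or absorbing into the parity count, tangential departures at smooth boundary points and bridge-portions that pass through a cusp in their interior --- and that the arc-portions are genuinely the full arcs $A_l$ with constant odd multiplicity. The even multiplicity of the bridges I would extract from the flux property exactly as in Remark~\ref{rem2}, but the bookkeeping of portions with general multiplicities, and the verification that transversal self-crossings (which are $\cH^1$-negligible) do not interfere, are the delicate points. On the construction side the comparable difficulty is the simultaneous realization of all the Eulerian transitions at the cusps as genuine $C^1\cap W^{2,p}$ immersions of finite energy.
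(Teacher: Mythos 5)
Your converse implication ($k$ even $\Rightarrow\rfp(E)<+\infty$) is essentially correct and takes a genuinely different route from the paper, which disposes of this direction in one line by citing Theorem 6.4 of \cite{BeDaPa93} together with $\overline{\cF_p}\le\overline{G}$; your explicit pairing-and-bridges construction is a self-contained alternative. One small repair: an arbitrary Eulerian circuit may force a transition from one branch of a cusp directly into the other, which is not $C^1$; instead, fix at each cusp the matching of the four aligned half-edges (each incoming branch paired with one copy of the doubled bridge) and follow this transition system --- it automatically decomposes the multigraph into closed $C^1\cap W^{2,p}$ immersions of finite energy, with $\te_V=1$ on $\cF E$ and $\te_V=2$ on the bridges, after which the verification of the conditions defining $\cA(E)$ goes through as you indicate.

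The forward implication, however, has a genuine gap, exactly where you flagged it, and it is not a routine bookkeeping matter but the heart of the proof. For an arbitrary $V\in\cA(E)$ your portion count fails on three counts. First, $\te_V$ need not be constant along a cuspidal arc $A_l$: a strand of multiplicity $2$ may merge tangentially with the boundary at an interior \emph{smooth} point of $A_l$, so that $\te_V$ jumps from $1$ to $3$ there without violating either the a.e.\ oddness on $\cF E$ or the flux property (the section sum is $1+2=3$ on both sides); hence ``$A_l$ is traversed exactly $m_l$ times'' is not well defined and $P_\pa\equiv k \pmod 2$ is unjustified. Second, transitions between arc-portions and bridge-portions can occur at smooth points of $\pa E$, not only at cusps, so the alternation does not localize the parity at the cusps. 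Third, and worst, a covering curve may lie on $\pa E$ over a compact set of times with empty interior and positive measure (the mechanism of Example \ref{example}), and branching/crossing points may accumulate, so your decomposition into maximal portions can have infinitely many components and the identity $P_\pa=P_\Si$ degenerates. The paper's proof devotes its entire Step 1 to removing precisely this pathology: it covers the compact set $K$ of accumulation points of branching by finitely many nice rectangles and replaces the curves inside each by cubic interpolating graphs (system \eqref{eq5}), producing a modified set $\tE$ with the same cusps and a varifold $\tV\in\cA(\tE)$ supported on a \emph{finite} graph. Only then does it run a parity argument --- not on traversals, but on vertex degrees of the halved-multiplicity graph $G$ with $m(e)=m_\Ga(e)/2$ or $(m_\Ga(e)-1)/2$: regularity of the graph (your flux property, Remark \ref{rem2}) shows the cusps are exactly the odd-degree vertices of $G$, and the handshake lemma (Theorem 1.2.1 in \cite{Or62}) concludes. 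This degree count is the robust substitute for your traversal count, being local at vertices and indifferent to how multiplicities distribute along edges; without a finiteness reduction of this kind your parity argument does not close.
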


\begin{proof}
	If $k$ is even, Theorem 6.4 in \cite{BeDaPa93} implies that the relaxed energy $\overline{G}(E)$ studied in \cite{BeDaPa93} is finite. Since $\overline{\cF_p}\le \overline{G}$, we have one implication.\\
	Now suppose that $\overline{\cF_p}(E)$ is finite, i.e. $\cA(E)\neq\epty$. Let $V=\bv(\Ga,\te_V)=\sum_{i=1}^N(\ga_i)_\sharp(\bv(S^1,1))\in\cA(E)$. We are going to construct a set $\tE$ satisfying the hypotheses of the theorem and having the same unknown number of cusps of $E$, together with a varifold $\tV=\bv(\tGa,\tilde{\te}_{\tV})\in\cA(\tE)$ with the additional property that $\tGa$ is a finite graph $G_{\tGa}$ with multiplicity as given in Remark \ref{rem2}. Once the support of a varifold in $\cA(\tE)$ is a finite graph, we can prove that the number of cusps is even.\\
	
	\noindent \emph{Step 1.} Now we construct $\tE$ and $\tGa$ as claimed. Let $\cC(\Ga)$ be the set of points $p\in\Ga$ such that in any neighborhood of $p$ it is impossible to write $\Ga$ as a single graph, i.e. $p$ is a crossing or a branching point of two pieces of some curves $\ga_i,\ga_j$. Call $K$ the set of accumulation points of $\cC(\Ga)$. Observe that $K$ is compact.\\
	Also, observe that if a sequence $p_n\in\cC(\Ga)$ converges to $\bar{p}$, $p_n=\ga_i(t_n)=\ga_j(\tau_n)$ with $t_n\to t^-,\tau_n\to\tau^-$ or $t_n\to t^-,\tau_n\to\tau^+$ and $t\neq \tau$, then $\dot\ga_i(t)=\pm \dot\ga_j(\tau)$.\\
	Now fix $\ep<<1$ and let $q\in K$. Let $v_1(q),...,v_{N_q}(q)$ be unit vectors identifying the tangent directions at $q$ of the curves passing through $q$. For $j=1,...,N_q$ let $\si_1^j,...,\si_{M_{q,j}}^j$ be suitable restrictions of the curves $\{\ga_i\}$ on disjoint intervals $I_i^j=\mbox{domain}(\si_i^j)$ such that each $\si_i^j$ passes through $q$ with tangent parallel to $v_j(q)$. Also, for $i=1,...,N_q$ let $R_i(q)$ be open rectangles with two sides parallel to $v_i(q)$. Up to restriction we assume that each $\si_i^j$ is contained in $\overline{R_j(q)}$ with endpoints on the boundary of the rectangle. We can assume the following properties:\\
	i) each rectangle contains at most one cusp and cusps do not lie on the boundary of any rectangle. Also if $q\in\Ga\sm\pa E$, then $\overline{R_i(q)}\cap\pa E=\epty$;\\
	ii) $R_i(q)\cap \pa E$ is homeomorphic to a closed segment such that: if no cusps lie in $R_i(q)$ then $R_i(q)\cap \pa E$ is the graph of a $C^1\cap W^{2,p}$ function, if a cusp lies in $R_i(q)$ then $R_i(q)\cap \pa E$ is the union of the graphs of two $C^1\cap W^{2,p}$ functions as in the definition of simple cusp;\\
	iii) each $\si_i^j$ can be parametrized as graph inside $R_j(q)$, and $|\dot \si_i^j(\cdot)+ v_j(q)|\le\ep$ or $|\dot \si_i^j(\cdot)- v_j(q)|\le\ep$;\\
	iv) $\si_i^j$ intersects $\pa R_j(q)$ only on the sides perpendicular to $v_j(q)$ and transversely, and $\si_i^j$ intersects $\si_k^l$ only in the open set $R_j(q)\cup R_l(q)\sm(\pa R_j(q)\cup \pa R_k(q))$;\\
	v) if $a\in\pa I_i^j,b\in\pa I_k^j$ and $\si_i^j(a)=\si_i^k(b)$, then $\dot \si_i^j(a)=\pm \dot \si_k^j(b)$;\\
	vi) if $\si_i^j(a)\in\cF E$, then $\te_V(\si_i^j(a))=\sharp\{k\,\,|\,\,\si_k^j \mbox{ passes through } \si_i^j(a) \}$ is odd; if $\si_i^j(a)\in\Ga\sm\pa E$, then $\te_V(\si_i^j(a))=\sharp\{k\,\,|\,\,\si_k^j \mbox{ passes through } \si_i^j(a) \}$ is even.\\
	Property v) follows by the fact that transverse crossings of two curves are at most countable (as proved in Lemma \ref{lem2}), and property vi) follows from the fact that $V\in\cA(E)$ and thus $\te_V$ is odd (resp. even) at $\cH^1$-ae point of $\cF E$ (resp. $\Ga\sm\pa E$).\\
	Since the set $K$ is compact, we can extract a finite covering of rectangles corresponding to points $q_1,...,q_L$. By Theorem \ref{propmon} the numbers $N_{q_i}$ of the rectangles of $q_i$ are uniformly bounded in terms of the energy, which is finite. Hence we can add to the cover the possibly remaining rectangles corresponding to each $q_i$, yielding a covering that is still finite. For any $j=1,...,L$ and $i=1,...,N_{q_j}$ we are going to modify the curves $\si_i^j$ in a finite number of steps. We start from the family $\{\si_i^1\}_{i=1}^{M_{q_1,1}}$ corresponding to $R_1(q_1)$, then one modifies the curves corresponding to $R_2(q_1)$ and so on up to $R_{N_{q_1}}(q_1)$, then one changes the curves of the families corresponding to $q_2$ and so on up to $q_L$. Since the procedure is the same at any step, let us describe only the case of the family $\{\si_i^1\}_{i=1}^{M_{q_1,1}}$ corresponding to $R_1(q_1)$. In the end we will end up with the desired $\tE,\tGa$.\\
	We modify a $\si_i^1$ as follows, depending on the cases $q_1\in \Ga\sm\pa E$, or $q_1\in \cF E$, or $q_1$ is a cusp.\\
	1) Suppose $q_1\in \Ga\sm\pa E$. Fix $\si_i^1$ and split it into the two pieces divided by $q_1$. Let us say that one such piece of $\si_i^1$ is parametrized as graph by $f:[0,\al]\to\R$ with $f(0)=f'(0)=0$ corresponding to $q_1$. Let $u_i^1$ be the solution of
	\beq\label{eq5}
		\begin{cases}
			u(x)=\la x^3+\mu x^2+\nu x+\om,\\
			u(0)=u'(0)=0,\\
			u(\al)=f(\al),\,\,\,u'(\al)=f'(\al),
		\end{cases}
	\eeq
	for the suitable constants $\la, \mu, \nu, \om$. Doing the same with the other piece of $\si_i^1$, we substitute each $\si_i^1$ with the graphs of the obtained functions $u_i^1$ (such modification is then a change in one of the original curves $\ga_i$'s). Observe that by properties v), vi) one obtains a new varifold still in the class $\cA(E)$, in fact graphs of finitely many polynomials meet in at most finitely many points.\\
	2) Suppose now that $q\in\cF E$. By construction, for example $R_1(q_1)$ contains some curves with endpoints on $\cF E\cap \pa R_1(q_1)$. In this case we modify the curves exactly as before following the system \eqref{eq5}; moreover we declare that the boundary $\cF E$ is modified inside $R_1(q_1)$ following the new modified curves having endpoints on $\cF E\cap \pa R_1(q_1)$. This leads to a new set which we already call $\tE$ satisfying the hypotheses of the theorem and having the same number of cusps of $E$, together with a new varifold already called $\tV$ in the class $\cA(\tE)$ (as before by properties v), vi), together with the fact that the new curves are graphs of polynomials).\\
	3) Finally suppose $q_1$ is a cusp of $\pa E$. In this case we modify the curves (and the set $E$) exactly in the same way of the case 2). This preserves the cusp in the new set $\tE$.\\
	After performing these modifications in any $R_i(q_j)$ we end up with a varifold $\tV$ given by curves $\tilde{\ga}_i$ such that the set $\cC(\tGa)$ of the points $p\in\tGa$ such that in any neighborhood of $p$ it is impossible to write $\tGa$ as a single graph is finite. In fact the points of this type belonging to the union of the closure of the rectangles $R_i(q_j)$ are finite. So, if by contradiction there are points of $\cC(\tGa)$ accumulating to some limit point $q$, this would be outside the union of the rectangles $R_i(q_j)$, and $q$ would be a limit of a sequence in $\cC(\Ga)$. Hence $q$ would be in $K$, and thus in the interior of some rectangle $R_i(q_j)$, that is a contradiction.\\
	
	\noindent \emph{Step 2.} Now we show that, in general, if a set $E$ is as in the hypotheses of the theorem and if $V=\bv(\Ga,\te_V)\in\cA(E)$ is such that $\Ga$ is a finite graph, then the number of cusps of $E$ is even. Together with Step 1, this gives the conclusion. Here we essentially generalize the strategy of \cite{BeDaPa93}.\\
	Call $G_\Ga$ the finite graph given by $\Ga$ with multiplicity $m_\Ga$ as described in Remark \ref{rem2} (recall that $G_\Ga$ is regular). Let us construct a new graph $G$ with multiplicity $m$ as follows. If $e\in E_{G_\Ga}$, then define the multiplicity
	\beqs
		m(e):=\begin{cases}
			\frac{m_\Ga(e)}{2} & \mbox{ if } m_\Ga(e) \mbox{ even,}\\
			\frac{m_\Ga(e)-1}{2} & \mbox{ if } m_\Ga(e) \mbox{ odd,}
		\end{cases}
	\eeqs
	with the convention that if $m(e)=0$, then the edge $e$ does not appear in $G$. Now let $y\in V_G$. We want to evaluate the parity of $\ro_G(y)$ dividing some cases.\\
	a) Suppose $y\not\in\pa E$. Then any edge $e$ of $G_\Ga$ with endpoint at $y$ has $\ro^+_{G_\Ga}(y,w_i(y))=\ro^-_{G_\Ga}(y,w_i(y))$ even for any $w_i(y)$. Hence by definition we have that $\ro_G(y)$ is even.\\
	b) Suppose $y\in\cF E$. Then exactly two edges $e_1,e_2$ of $G_\Ga$ having an endpoint at $y$ have odd multiplicity: $m_\Ga(e_i)=2k_i+1$ for $i=1,2$. Up to relabeling suppose that $e_1\in I^+(w_1(y))$ and $e_2\in I^-(w_1(y))$. Every other edge of $G_\Ga$ having an endpoint at $y$ has even multiplicity. Since $G_\Ga$ is regular we have that
	\beqs
		2k_1+1+2a^+_1=\ro^+_{G_\Ga}(y,w_1(y))=\ro^-_{G_\Ga}(y,w_1(y))=2k_2+1+2a^-_1,
	\eeqs
	and similarly
	\beqs
		2a^+_i=\ro^+_{G_\Ga}(y,w_i(y))=\ro^-_{G_\Ga}(y,w_i(y))=2a^-_i,
	\eeqs
	for any possible $i\ge2$. Then
	\beqs
		\ro_G(y)=k_1+a^+_1+k_2+a^-_1 +\sum_{i\ge2} a^+_i+a^-_i =2\bigg(k_1+a^+_1+\sum_{i\ge2} a^+_i\bigg)
	\eeqs
	is even.\\
	c) Finally suppose that $y$ is a cusp of $\pa E$. Then exactly two edges $e_1,e_2$ of $G_\Ga$ having an endpoint at $y$ have odd multiplicity: $m_\Ga(e_i)=2k_i+1$ for $i=1,2$. Up to relabeling suppose that $e_1,e_2\in I^+(w_1(y))$. Every other edge of $G_\Ga$ having an endpoint at $y$ has even multiplicity. Since $G_\Ga$ is regular we have that
	\beqs
		2k_1+1+2k_2+1+2a^+_1=\ro^+_{G_\Ga}(y,w_1(y))=\ro^-_{G_\Ga}(y,w_1(y))=2a^-_1,
	\eeqs
	and similarly
	\beqs
		2a^+_i=\ro^+_{G_\Ga}(y,w_i(y))=\ro^-_{G_\Ga}(y,w_i(y))=2a^-_i,
	\eeqs
	for any possible $i\ge2$. Then
	\beqs
		\ro_G(y)=k_1+k_2+a^+_1+a^-_1 + \sum_{i\ge2} a^+_i+a^-_i= 2(k_1+k_2+a^+_1)+1+2\sum_{i\ge2} a^+_i,
	\eeqs
	that is odd.\\
	It follows that the cusps of $\pa E$ coincides with the vertices $y$ of $G$ having odd local density $\ro_G(y)$. By Theorem 1.2.1 in \cite{Or62}, the vertices of a finite graph with odd local density are even. Hence the cusps are even and the proof is completed.
\end{proof}

\noindent Now we turn our attention to another class of sets. Let us give the following definition.

\begin{defn} \label{defpoligoni}
	A closed measurable set $E\con\R^2$ is a $p$-polygon if $\pa E=(\ga)$ for a curve $\ga:[0,2\pi]/_\sim \simeq S^1\to\R^2$ such that:\\
	i) $\ga$ is injective,\\
	ii) there exist finitely many times $t_1<t_2<...<t_K$ such that $\ga|_{(t_i,t_{i+1})}\in W^{2,p}$ for $i=1,...,K$ (with $t_{K+1}=t_1$), and $\ga'(t_i^-),\ga'(t_i^+)$ are linearly independent for $i=1,...,K$.
\end{defn}

\begin{prop} \label{poligoni}
	Let $E$ be a $p$-polygon, then $\rfp(E)=+\infty$.
\end{prop}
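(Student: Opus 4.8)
The plan is to reduce everything to the statement $\cA(E)=\epty$. By Definition \ref{defpoligoni} a $p$-polygon is closed with $\pa E$ equal to the image of a closed injective curve, so $E$ is bounded (if it were essentially unbounded, Theorem \ref{thmmain} would already give $\rfp(E)=+\infty$). For bounded $E$, Theorem \ref{thmmain} says precisely that $\rfp(E)=+\infty$ as soon as $\cA(E)=\epty$, so it suffices to prove the latter. Suppose by contradiction that there is $V=\bv(\Ga,\te_V)=\sum_{i=1}^N(\ga_i)_\sharp(\bv(S^1,1))\in\cA(E)$ with $\ga_i\in C^{1}\cap W^{2,p}$ immersions. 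Since $V$ is a finite sum of image varifolds of $C^1$ immersions with $\fp(V)<+\infty$ and $\te_V$ bounded, the Remark following Definition \ref{defflux} guarantees that $V$ verifies the flux property. I will derive a contradiction by exhibiting a point and a direction at which flux fails, exploiting that $\cF E\con\{x\,|\,\te_V(x)\text{ odd}\}$ and $\cH^1(\{\te_V\text{ odd}\}\sm\cF E)=0$, both built into the definition of $\cA(E)$.

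First I would localize at a corner. Let $p=\ga(t_j)$ be a vertex and set $\hat u=\ga'(t_j^-)/|\ga'(t_j^-)|$, $\hat w=\ga'(t_j^+)/|\ga'(t_j^+)|$; by Definition \ref{defpoligoni} these are linearly independent, so $\hat u\neq\pm\hat w$. Near $p$ the boundary $\pa E$ is the union of two $C^1$ arcs: an arc $A$ issuing from $p$ with tangent line $\R\hat u$, lying on the $-\hat u$ side, and an arc $B$ with tangent line $\R\hat w$, lying on the $+\hat w$ side. Because $\pa E\con\Ga$ and $V$ is built from $C^1$ immersions, the immersion covering $A$ reaches $p$ with tangent $\hat u$ and, by $C^1$ regularity, continues past $p$ in the $+\hat u$ direction. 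Thus among the branches of $\Ga$ through $p$ that are tangent to $\hat u$ there is one whose image is $A$ on the side $\lgl\,\cdot-p,\hat u\rgl<0$ and a $C^1$ arc $A'$ pointing in the $+\hat u$ direction on the side $\lgl\,\cdot-p,\hat u\rgl>0$. Crucially, since $\hat u\neq\pm\hat w$, the arc $A'$ is disjoint from $\pa E=A\cup B$ near $p$, hence $A'\con\Ga\sm\pa E$.

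Then I would apply the flux property at $p$ in the direction $v=\hat u$. It produces a nice rectangle $R_{\hat u}(p)$ in which the branches tangent to $\hat u$ are graphs over the $\hat u$-axis, together with a constant $M\in\N$ with $\sum_{z}\te_V(z)=M$ on every line $\{\lgl\,\cdot-p,\hat u\rgl=c\}$, $|c|<a$, the sum running over the intersections of these branches with the line. Reducing modulo $2$, this sum equals the number of intersection points of odd multiplicity, i.e. the points lying in $\cF E$. On the side $c<0$ the unique $\hat u$-tangent branch belonging to $\pa E$ is $A$, contributing exactly one such point, so $M$ is odd; on the side $c>0$ the relevant branches are $A'$ and its companions, none of which lies in $\pa E$, so every intersection point has even multiplicity and $M$ is even. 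This contradiction shows that $V$ does not verify the flux property, against the Remark following Definition \ref{defflux}; hence $\cA(E)=\epty$ and $\rfp(E)=+\infty$.

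The hard part is the local bookkeeping in the two middle steps. One must verify that the $C^1$ continuation $A'$ of the boundary arc $A$ genuinely escapes $\pa E$: this is exactly where the hypothesis that $\ga'(t_j^-),\ga'(t_j^+)$ are linearly independent is used, and it is what separates a corner from a cusp — for a cusp the two tangent lines coincide, $A'$ may run along the second boundary arc, and the parity argument breaks down, consistently with Theorem \ref{thmcusps}. One must also be careful that the odd-multiplicity points on each side are counted correctly; here the identity $\{\te_V\text{ odd}\}=\cF E$ up to $\cH^1$-null sets, together with $\cF E\con\pa E$ and the fact that $A$ (resp. $A'$) meets each transversal line exactly once near $p$, is what makes the parities on the two sides odd and even respectively.
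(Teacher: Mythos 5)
Your proof is correct and follows essentially the same route as the paper's: reduce via Theorem \ref{thmmain} to showing $\cA(E)=\epty$, then contradict the flux property at a vertex by getting odd total multiplicity on the $\cF E$ side of a nice rectangle in the direction $v=\ga'(t_j^-)$ and even total multiplicity on the other side. Your argument that the $C^1$ continuation $A'$ escapes $\pa E$ thanks to the linear independence of the one-sided tangents is exactly what the paper compresses into the assertion ``we can suppose that $g_1|_{[-\ep,0)}\con \cF E$, $g_1|_{(0,\ep]}\con \Ga\sm\pa E$, and $(g_i)\cap\pa E=\{0\}$ for $i\ge 2$,'' so you have merely made the same step explicit.
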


\begin{proof}
	Let $\ga$ be as in the definition of $p$-polygon. Without loss of generality we can assume that $0=\ga(0)$ is such that $\ga'(0^-)$ and $\ga'(0^+)$ are linearly independent. Suppose by contradiction that there is a varifold $V=\bv(\Ga,\te_V)=\sum_{i=1}^N (\ga_i)_\sharp(\bv(S^1,1))\in \cA(E)$. Let $v=\ga'(0^-)$, then since $V$ verifies the flux property we find a nice rectangle $R_v(p)$ at $p$ with side parameters $a,b$ for the curves $\{g_j\}_{j=1}^r$ given by the definition of flux property. We can suppose that $g_1|_{[-\ep,0)}\con \cF E$, $g_1|_{(0,\ep]}\con \Ga \sm \pa E$, and that $(g_i)\cap \pa E = \{0\}$ for $i=2,...,r$. Hence
	\beqs
	g_1|_{[-\ep,0)}\con\{\te_V \mbox{ odd}\},
	\eeqs
	\beqs
	\cH^1\bigg(\bigg(\bcup_{i=2}^r (g_i) \cup g_1\big((0,\ep]\big) \bigg)\sm \{\te_V \mbox{ even}\} \bigg) =0.
	\eeqs
	Then there exists $c_1\in (-a,0)$ such that
	\beqs
	\sum_{z\in \cup_{j=1}^r(g_i) \cap \{y\,|\,\lgl y-p,v\rgl=c_1\}} \te_V(z)= M_1
	\eeqs
	with $M_1$ odd, and there exists $c_2\in (0,a)$ such that
	\beqs
	\sum_{z\in \cup_{j=1}^r(g_i) \cap \{y\,|\,\lgl y-p,v\rgl=c_2\}} \te_V(z)= M_2
	\eeqs
	with $M_2$ even. But by the flux property $M_1$ and $M_2$ should be equal, thus we have a contradiction.
\end{proof}

\begin{remark}
	More generally it follows from the proof of Proposition \ref{poligoni} that roughly speaking $\rfp(E)=+\infty$ whenever the boundary $\pa E$ has an angle (in the same sense of the definition of polygon).\\
\end{remark}

\noindent With the strategy in the proof of Proposition \ref{poligoni} we can construct an example of a set $E\con\R^2$ such that $E$ is a set of finite perimeter such that the associated varifold $V_E$ verifies that
\beqs
	\si_{V_E}=0, \qquad k_{V_E}\in L^2(\mu_{V_E}), \qquad\mbox{but  } \rfp(E)=+\infty.
\eeqs
Such set is discussed in the next example.

\begin{example} \label{ex1}
	Consider a positive angle $\te>0$ which will be taken very small and the vectors in the plane identified by the complex numbers
	\beq \label{ref16}
		e^{-i\te},\qquad e^{-i2\te},\qquad e^{i(-\pi+\te)},\qquad e^{i(-\pi+2\te)}.
	\eeq
	The sum of such vectors gives the point $(0, -2(\sen(\te)+\sen(2\te)))$. Now let $\vp>0$ be another positive angle and consider the vectors
	\beq \label{ref17}
		e^{i\vp},\qquad e^{i(\pi-\vp)},
	\eeq
	so that the sum of these last vectors gives the point $(0, 2\sin(\vp))$. Then for $\te\to0$, since $\sen(\te)+\sen(2\te)= 3\te + o(\te^2)$ there exists $\vp=3\te+o(\te^2)$ such that the sum of the vectors in \eqref{ref16} and \eqref{ref17} is zero.\\
	Given these vectors we can define a set $E$ as in Fig. \ref{figangoli} whose boundary is the image of three smooth closed immersions $\si_i$ of the interval $[0,1]$ having $\si_i(0)=\si_i(1)=0$ with derivative $\si'_i(0),\si'_i(1)$ proportional to the vectors in \eqref{ref16}, \eqref{ref17}. In such a way the varifold $V_E$ clearly verifies that $\si_{V_E}=0$ and $k_{V_E}\in L^2(\mu_{V_E})$. However arguing as in the proof of Proposition \ref{poligoni} and assuming $\rfp(E)<+\infty$, one immediately gets a contradiction. Hence $\rfp(E)=+\infty$.\\
	\begin{figure}[h]
		\begin{center}
			\begin{tikzpicture}[scale=2.5]
			\draw
			(-1.5,0)--(2.5,0);
			\draw
			(0,0)--(0.94,0.342);
			\filldraw[fill=white, pattern=north east lines]
			(0,0)to[out=20, in=0, looseness=1](0,1)to[out=180, in=150, looseness=1](0,0);
			\draw
			(0,0)--(1.4895,-0.1725);
			\draw
			(0,0)--(1.954, -0.422);
			\filldraw[fill=white, pattern=north east lines]
			(0,0)--(0.993,-0.115)to[out=-6.6, in=-12.2, looseness=1](0.977, -0.211)--(0,0);
			\filldraw[fill=white, pattern=north east lines]
			(0,0)--(-0.993,-0.115)to[out=-173.4, in=-167.8, looseness=1](-0.977, -0.211)--(0,0);
			\path[font=\normalsize]
			(-0.6,0.7)node[left]{$E$};
			\draw[color=black,scale=1,domain=0: 0.349,
			smooth,variable=\t,shift={(0,0)},rotate=-70]plot({1.*sin(\t r)},
			{1.*cos(\t r)});
			\draw[color=black,scale=1,domain=0: 0.1164,
			smooth,variable=\t,shift={(0,0)},rotate=-90]plot({1.5*sin(\t r)},
			{1.5*cos(\t r)});
			\draw[color=black,scale=1,domain=0: 0.21,
			smooth,variable=\t,shift={(0,0)},rotate=-90]plot({2*sin(\t r)},
			{2*cos(\t r)});
			\path[font=\normalsize]
			(1.2,0.2)node[left]{$\vp$};
			\path[font=\normalsize]
			(1.7,-0.1)node[left]{$\te$};
			\path[font=\normalsize]
			(2.25,-0.25)node[left]{$2\te$};
			\end{tikzpicture}
		\end{center}
		\caption{Picture describing the set $E$ of Example \ref{ex1}. The set is symmetric with respect to the reflection about the vertical axis.}\label{figangoli}
	\end{figure}
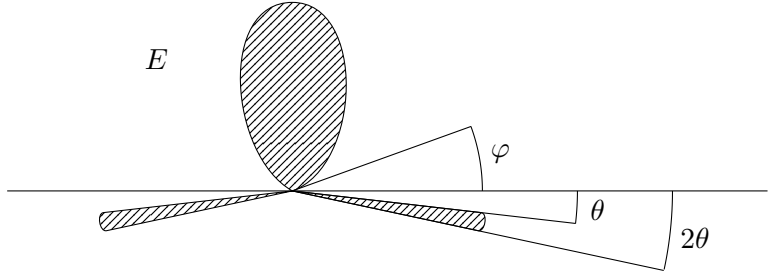
\end{example}

\noindent Finally we construct a simple example showing that there are sets $E$ with $\rfp(E)<\infty$, but such that $\cH^1(\pa E\sm \cF E)>0$ and $\pa E$ is the support of a $C^\infty$ immersion $\si$.\\

\begin{example}\label{example}
	Let us construct a set $E$ such that $\pa E= (\ga)$ for a $C^\infty$ immersion $\ga:S^1\to\R^2$, $\cH^1(\pa E\sm \cF E)>0$, and $\rfp(E)<+\infty$.\\
	Let $\{q_n\}_{n\ge 1}=\Q\cap [0,1]$ be an enumeration of the rationals in $[0,1]$, and define $K=[0,1]\sm \cup_{n\ge 1} (q_n-2^{-n-2},q_n-2^{-n-2})$. The set $K$ is compact and $\cL^1(K)\ge 1- \sum_{n=1}^{\infty} 2^{-n-1}= \frac{1}{2}$. Consider a $C^\infty$ nonincreasing function $\vp:[0,\infty)\to [0,1]$ such that $\vp(0)=1$, $\vp(t)=0$ for $t\ge 1$ and let
	\beqs
	f(x)=\sum_{n=1}^{\infty} \frac{1}{2^n} \vp\bigg( \frac{(x-q_n)^2}{(2^{n+2})^2} \bigg) \qquad\forall x\in[0,1].
	\eeqs
	By construction we have that $K=f^{-1}(0)$. Moreover $f\in C^\infty([0,1])$, in fact $\vp\le 1$ and $|\vp^{(k)}|\le c_k$ for any $k\ge 1$ for some $c_k>0$, so that both the series $f$ and the series of the derivatives totally converge. Then we can define a $C^\infty$ parametrization $\si:[0,4]\to\R^2$ such that $\si(t)=(t,f(t))$ for $t\in[0,1]$, $\si(t)=(3-t,-f(t))$ for $t\in[2,3]$, while $\si|_{[1,2]}$ and $\si|_{[3,4]}$ parametrize two drops with vertices respectively at $(1,0)$ and $(0,0)$. Therefore $\si$ parametrizes the boundary of a bounded set $E$ which is the planar surface enclosed by the two drops and lying between the graphs of $f$ and $-f$.\\
	By construction $\pa E=(\si)$ and $\cF E= (\si)\sm K$, hence $\cH^1(\pa E\sm \cF E)\ge \frac{1}{2}$. However approximating $f$ with $f_n(x)=f(x)+\frac{1}{n}\psi(x)$, where $\psi\in C^\infty([0,1];[0,1])$ is such that $\psi(0)=\psi(1)=0$, $\psi|_{(0,1)}>0$, and defining $\si_n$ in analogy with $\si$, we conclude that $\rfp(E)<+\infty$.\\
\end{example}

\begin{figure}[h]
	\begin{center}
		\begin{tikzpicture}[scale=2.5]
		\filldraw[fill=white, pattern=north east lines]
		(0,0)to[out=90, in=0, looseness=1](-0.5,0.5)to[out=180, in=135, looseness=1](-1,0)to[out=45, in=135, looseness=1](0,0);
		\filldraw[fill=white, pattern=north east lines]
		(0,0)to[out=270, in=0, looseness=1](-0.5,-0.5)to[out=180, in=-135, looseness=1](-1,0)to[out=-45, in=-135, looseness=1](0,0);
		\filldraw[fill=white, pattern=north east lines]
		(0,0)to[out=45, in=90, looseness=1](1,0)to[out=-90, in=-45, looseness=1](0,0);
		\path[font=\Huge]
		(0.098,-0.007)node[left]{$\cdot$};
		\path[font=\Huge]
		(-0.9,0)node[left]{$\cdot$};
		\path[font=\normalsize]
		(0.3,0.3)node[left]{$E$};
		\path[font=\normalsize]
		(-1,0)node[left]{$x$};
		\path[font=\normalsize]
		(0,0)node[left]{$y$};
		\end{tikzpicture}
	\end{center}
	\caption{An example of a set of finite perimeter $E$ such that $\cF_p(E)=\cF_p(V)<+\infty$ for any $p\in[1,\infty)$, where $V\in\cA(E)$ is the varifold induced by a smooth immersion $\ga$ parametrizing $\pa E$. Here $\pa E= \cF E\sqcup\{x,y\}$ and the strict inclusions $\cF E \subsetneq \{x\,\,|\,\,\te_V(x) \mbox{ is odd}\}=\cF E \sqcup \{y\}\subsetneq \pa E$ occur.}
\end{figure}
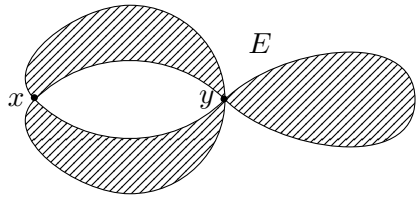

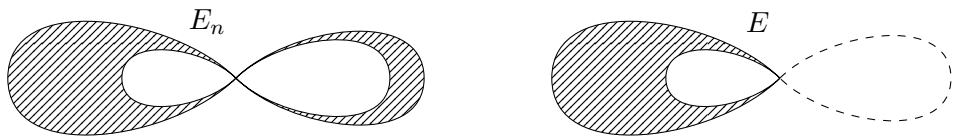
\begin{figure}[h]
	\begin{center}
		\begin{tikzpicture}[scale=1.5, rotate=180]
		\filldraw[fill=white, pattern=north east lines]
		(0,0)to[out=45, in=90, looseness=1](1,0)to[out=270, in=-45, looseness=1](0,0)to[out=-45, in=270, looseness=1](2,0)to[out=90, in=45, looseness=1](0,0);
		\filldraw[fill=white, pattern=north east lines, rotate=180]
		(0,0)to[out=45, in=90, looseness=1](1.35,0)to[out=270, in=-45, looseness=1](0,0)to[out=-45, in=270, looseness=1](1.65,0)to[out=90, in=45, looseness=1](0,0);
		\path[font=\normalsize]
		(0,-0.5)node[left]{$E_n$};
		\end{tikzpicture}
			\qquad\qquad
		\begin{tikzpicture}[scale=1.5]
		\draw[dashed]
		(0,0)to[out= 45,in=90, looseness=1] (1.5,0)to[out=270 ,in=-45, looseness=1] (0,0);
		\filldraw[fill=white, pattern=north east lines, rotate=180]
		(0,0)to[out=45, in=90, looseness=1](1,0)to[out=270, in=-45, looseness=1](0,0)to[out=-45, in=270, looseness=1](2,0)to[out=90, in=45, looseness=1](0,0);
		\path[font=\normalsize]
		(0,0.5)node[left]{$E$};
		\end{tikzpicture}
	\end{center}
	\caption{An example of a set $E$ with finite relaxed energy such that $\pa E\sm \cF E$ is a singleton. A sequence of sets $E_n$ converging to $E$ with uniformly bounded energy is for example made of sets like in the one on the left in the picture; the dashed line represents the corresponding ghost line given by the collapsing of the right part of the sets $E_n$.}
\end{figure}

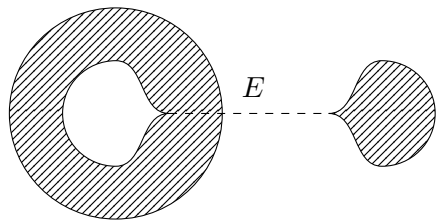
\begin{figure}[H]
	\begin{center}
		\begin{tikzpicture}[scale=0.7, rotate=90]
		\filldraw[fill=white, pattern=north east lines, even odd rule]
		(0,2) circle (2cm)  (0,3)to[out=0, in=90](1,2)to[out=270, in=90](0,1)to[out=90, in=270](-1,2)to[out=90, in=180](0,3);
		\filldraw[fill=white, pattern=north east lines, even odd rule]
		(0,-2)to[out=270, in=90](1,-3)to[out=270, in=0](0,-4)to[out=180, in=270](-1,-3)to[out=90, in=270](0,-2);
		\draw[dashed]
		(0,1)--(0,-2);
		\path[font=\normalsize]
		(0.5,-1)node[left]{$E$};
		\end{tikzpicture}
	\end{center}
	\caption{An example of a set $E$ with finite relaxed energy such that, by Lemma \ref{lemalternativa}, the multiplicity $\te_V$ is not locally constant on connected components of $\cF E$.}
\end{figure}
\vspace{1.5cm}

\textcolor{white}{text}

\noindent \emph{Acknowledgments.} I warmly thank Matteo Novaga for having proposed me to study this problem and for many useful conversations. I also thank the referee for suggesting an improvement on the work.\\



\begin{thebibliography}{}
	
	
	%
	\bibitem[AmFuPa00]{AmFuPa00} Ambrosio L., Fusco N., Pallara D. : \emph{Functions of Bounded Variation and Free Discontinuity Problems}, Oxford Science Publications (2000).
	%
	\bibitem[AmMa03]{AmMa03} Ambrosio L., Masnou S. : \emph{A direct variational approach to a problem arising in image reconstruction}, Interfaces and Free Boundaries 5 (2003), 63-81.	
	%
	%
	\bibitem[BeDaPa93]{BeDaPa93} Bellettini G., Dal Maso G., Paolini M. : \emph{Semicontinuity and relaxation properties of a curvature depending functional in 2D}, Annali della Scuola Normale Superiore di Pisa, Classe di Scienze, tome 20, 2 (1993), 247-297.
	%
	\bibitem[BeMu04]{BeMu04} Bellettini G., Mugnai L. : \emph{Characterization and representation of the lower semicontinuous envelope of the elastica functional}, Ann. Inst. H. Poincar\'{e}, Anal. Non Lin\'{e}aire 21(6) (2004) 839-880.
	%
	\bibitem[BeMu07]{BeMu07} Bellettini G., Mugnai L. : \emph{A Varifolds Representation of the Relaxed Elastica Functional}, Journal of Convex Analysis Volume 14 (2007), No. 3, 543-564.
	
	\bibitem[BePa95]{BePa95} Bellettini G., Paolini M. : \emph{Variational properties of an image segmentation functional depending on contours curvature}, Adv. in Math. Science and Applications, Gakk\"{o}tosho, Tokyo, Vol. 5, N. 2 (1995), pp. 681-715.
	
	\bibitem[BeCaMaSa11]{BeCaMaSa11} Bertalm\'{\i}o M., Caselles V., Masnou S., Sapiro G. : \emph{Inpainting}, Encyclopedia of Computer Vision, Springer, 2011.
	%
	%
	%
	%
	%
	\bibitem[DaNoPl18]{DaNoPl18} Dall'Acqua A., Novaga M., Pluda A. : \emph{Minimal elastic networks}, Indiana Univ. Math. J. (to appear).
	%
	%
	%
	
	
	
	%
	
	
	
	\bibitem[LeMa09]{LeMa09} Leonardi G. P., Masnou S. : \emph{Locality of the mean curvature of rectifiable varifolds}, Adv. Calc. Var. 2 (2009), no. 1, 17-42.
	
	
	\bibitem[LuSt95]{LuSt95} Luckhaus S., Sturzenhecker T. : \emph{Implicit time discretization for the mean curvature flow equation}, Calc. Var 3, 253-271 (1995).
	
	
	%
	
	\bibitem[MaNa13a]{MaNa13a} Masnou S., Nardi G. : \emph{A coarea-type formula for the relaxation of a generalized elastica functional}, J. Convex Anal. 20 (2013), no. 3, 617-653.
	
	\bibitem[MaNa13b]{MaNa13b} Masnou S., Nardi G. : \emph{Gradient Young measures, varifolds, and a generalized Willmore functional}, Adv. Calc. Var. 6 (2013), no. 4, 433-482.
	
	\bibitem[Me16]{Me16} Menne U. : \emph{Weakly differentiable functions on varifolds}, Indiana Univ. Math. J. 65 No. 3 (2016), 977-1088.
	
	\bibitem[MeSc18]{MeSc18} Menne U., Scharrer C. : \emph{An isoperimetric inequality for diffused surfaces}, Kodai Math. J., Volume 41, Number 1 (2018), 70-85.
	
	
	\bibitem[Mi65]{Mi65} Milnor J. W. : \emph{Topology from the Differentiable Viewpoint}, The University Press of Virginia (1965).
	
	
	\bibitem[OkPoWh18]{OkPoWh18} Okabe S., Pozzi P., Wheeler G. : \emph{A gradient flow for the $p$-elastic energy defined on closed planar curves}, Preprint (2018), \url{https://arxiv.org/abs/1811.06608v1}.
	
	\bibitem[Or62]{Or62} Ore O. : \emph{Theory of Graphs}, American Mathematical Society Colloquium Publications (XXXVIII), Providence, Rhode Island (1962).
	
	%
	
	
	%
	
	
	
	
	\bibitem[Si84]{Si84} Simon L. : \emph{Lectures on Geometric Measure Theory}, Proceedings of the Centre for Mathematical Analysis of Australian Nationa University (1984).
	%
	\bibitem[Si93]{Si93} Simon L. : \emph{Existence of surfaces minimizing the Willmore functional}, Communications in Analysis and Geometry 1 (1993), 281-326.
	%
	%
	
	
\end{thebibliography}
\end{document}